\theoremstyle{plain}
\newtheorem{theorem}{Theorem}[section]
\newtheorem{corollary}[theorem]{Corollary}
\newtheorem{lemma}[theorem]{Lemma}
\newtheorem{prop}[theorem]{Proposition}
\newtheorem{defn}[theorem]{Definition}
\newcommand\sqr[2]{{\vbox{\hrule height.#2pt
    \hbox{\vrule width.#2pt height#1pt \kern#1pt
        \vrule width.#2pt}\hrule height.#2pt}}}
\newcommand\qedbox{%
	\ifmmode\eqno\sqr53
	\else\nolinebreak\ \hfill\sqr53\medbreak\fi}
\newcommand\Zv{{\mathbf v}}
\newcommand\Zy{{\mathbf y}}
\def\N{{\mathbb N}}
\newcommand\Mat[3]{\text{Mat}_{#1 \times #2}(#3)}
\newcommand\re{{\mathbb R}}%reals
\newcommand\fld{{\mathbb F}}    
\DeclareMathOperator{\GQ}{GQ}
\DeclareMathOperator\im{im}
\newcommand{\sgn}{\rm sgn}
\newcommand\dotcup{\raisebox{-2pt}{\scalebox{1.3}{$\dot{\cup}\,$}}}
\begin{document}
\title{Clique complexes of strongly regular graphs, their eigenvalues, and cohomology groups}
\author{Sebastian M. Cioab\u{a}\footnote{Department of Mathematical Sciences, University of Delaware, Newark, DE 19716-2553, USA, {\tt cioaba@udel.edu}. This research has been partially supported by NSF grant DMS-2245556.}\, , Krystal Guo\footnote{Korteweg-de Vries Institute, University of Amsterdam, Amsterdam, The Netherlands, {\tt k.guo@uva.nl}}\, , Chunxu Ji\footnote{Department of Mathematics, California State University San Marcos, San Marcos, CA 92096. This research was done while the author was an undergraduate student at the University of Delaware as part of his senior thesis.}, and Mutasim Mim\footnote{Department of Mathematics, The Graduate School and University Center of The City University of New York, New York, NY 10016, USA, {\tt mmim@gradcenter.cuny.edu}. Part of this research was done while the author was a graduate student in the Department of Mathematical Sciences at the University of Delaware.}}
\date{\today}
\maketitle

\begin{abstract}
It is known that non-isomorphic strongly regular graphs with the same parameters must be cospectral (have the same eigenvalues). In this paper, we investigate whether the spectra of higher order Laplacians associated with these graphs can distinguish them. In this direction, we study the clique complexes of strongly regular graphs, and determine the spectra of the triangle complexes of several families of strongly regular graphs including Hamming graphs and Triangular graphs. In many cases, the spectrum of the triangle complex distinguishes between strongly regular graphs with the same parameters, but we find some examples where that is not the case. We also prove that if a graph has the property that for any induced cycle, there are four consecutive vertices on the cycle with a common neighbor, then the first cohomology group of the graph is trivial and we apply this result to several families of graphs.

	  \noindent\textit{Keywords: eigenvalues, Laplacian, simplicial complex, clique complex, strongly regular graphs}

   \noindent\textit{Mathematics Subject Classifications 2020: 05C50, 05E30, 05E45, 15A18}
\end{abstract}

\section{Introduction}

%- brief definiton of clique complex, simplicial laplacian etc.

%- Sebi, insert other citations to papers computing the spectra of these matrices here. relatively unexplored  

%- summary of results here. 

In the area of spectral characterization of graphs, one of the main goals is determining whether the eigenvalues of a given graph matrix (the adjacency matrix or the Laplacian matrix, for example) {\em characterize} the graph, see \cite{VanDamHaemersSurvey1, VanDamHaemersSurvey2}. It is well known that there exist non-isomorphic graphs that are cospectral (have the same eigenvalues). In such cases, it is of interest to determine if the eigenvalues of other graph matrices can distinguish these graphs. In that direction, there has been much interest in  exploring the relationship between the spectra of various matrices of graph and graph isomorphism, especially for the class of strongly regular graphs, see \cite{BabaiAutSRG, SpielmanSRG} for example. 

A graph is strongly regular with parameters $(v,k,\lambda,\mu)$ or a $(v,k,\lambda,\mu)$-SRG for short, if it has $v$ vertices, is $k$-regular, any two adjacent vertices have exactly $\lambda$ common neighbors and any two distinct and non-adjacent vertices have exactly $\mu$ common neighbors. We quote below from the preface of the recent monograph \cite{BM}:
{\em The topic of strongly regular graphs is an area where statistics, Euclidean
geometry, group theory, finite geometry, and extremal combinatorics meet. The subject concerns beautifully regular structures, studied mostly using spectral methods, group theory, geometry and sometimes lattice theory.}

In general, there is no relation between the eigenvalues of the adjacency matrix and the eigenvalues of the Laplacian matrix of a graph. However, in the case of regular graphs, such a connection between these spectra exists and the eigenvalues of one matrix can be deduced easily from the eigenvalues of the other matrix. Non-isomorphic strongly regular graphs with the same parameters must be cospectral (have the same adjacency or Laplacian matrix eigenvalues), see \cite[Thm. 9.1.3]{BH}. One class of matrices which has been studied are the symmetric powers of graphs. In \cite{AudGodRoyRud2006}, the authors show that the spectra of the symmetric square of strongly regular graphs with the same parameters are equal, while giving computational evidence that higher symmetric powers may be possible graph invariants for strongly regular graphs. In \cite{AlzIglPig2010, BarPon2009}, it is shown that there are infinite families of pairs of non-isomorphic graphs with cospectral $m$-th symmetric powers, for all $m$, however none of these graphs are strongly regular. Similarly, the spectrum of a matrix related to a quantum walk on a strongly regular graph was proposed in \cite{EHSW06,ESWH} and the first strongly regular counterexamples were found in \cite{GodGuoMyk2017}.

In this paper, we study the spectra of the up-Laplacians $L_i^{\uparrow}$  and first cohomology groups (see Section \ref{sec:defn} for the definitions) of the clique complexes of various classes of strongly regular graphs. These clique complexes are also of interest in topological data analysis, see \cite{Was2018}. 

In Section \ref{sec:defn}, we present the main definitions and notations. In Section \ref{sec:PG-GQ}, we determine the spectra of the up-Laplacian $L_1^\uparrow$ of partial geometries and generalized quadrangles. In Section \ref{sec:familes}, we determine the spectra of the up-Laplacian $L_1^\uparrow$ of Hamming graphs (Section \ref{thm:Hqn}),  and most of this section is devoted to determining the $L_1^{\uparrow}$ spectra of the Triangular graphs (Theorem \ref{thm:Tn}). The eigenvalues of the higher dimensional up-Laplacians $L_i^{\uparrow}$ of the Triangular graphs are also obtained there. In Section \ref{subsec:Tch}, we show that if a graph has the property that for any induced cycle, there are four consecutive vertices on the cycle with a common neighbor, then the first cohomology group of the graph is trivial and we apply this result to several families of graphs. 

The focus then shifts to describing our computations of the spectra of the up-Laplacians $L_1^{\uparrow}$ of small strongly regular graphs in Section \ref{sec:srgs}, with examples of cospectral pairs given in Section \ref{sec:cosp} including a pair of strongly regular graphs on 40 vertices. 

%\krystalsays{TDA and stuff, laplacians etc. provide tools etc. }

\section{Definitions}\label{sec:defn}

In this section, we introduce the definitions of simplicial complexes and some of their properties. Our main reference is \cite{BGP}. If $V$ is a finite set, then a \textit{simplicial complex} $X$ on $V$ is a collection of subsets of $V$ such that if $F\in X$, then any subset of $F$ is also in $X$. We call such $F$ a \textit{face} of $X$, and the \textit{dimension} of $F$ is defined to be $|F|-1$. The set of $i$-dimensional faces in $X$ is denoted by $X_i$, and conventionally $X_{-1} =\{\emptyset\}$. Assume we have a global ordering on the ground set $V$. If $F=\{x_0,x_1,\dots,x_i\}$, where $x_0<x_1<\dots<x_i$, then we define
\begin{equation}\label{eq:signing}    [F:K]=
\begin{cases}
        (-1)^j, & \text{if }K\subset F \text{ and } F\setminus K = \{x_j\},\\
        0, & \text{otherwise. }
\end{cases}
\end{equation}

In this paper, we will consider the clique complex of a graph; the \textit{clique complex $\mathcal{K}(G)$ of an undirected graph $G$} is the simplicial complex formed by the subsets of vertices in the cliques of $G$. Here, the ground set is the vertex set of a graph, equipped with a total ordering. If we restrict to faces of dimension $2$, the \textit{triangle complex $\mathcal{T}(G)$ of $G$} is the simplicial complex formed by the subsets of vertices in the triangles of $G$. 

Figure \ref{fig:simp-complex} shows the clique complex of a graph $G$ (which the complete graph $K_4$ with one edge deleted) with the incidences between its faces being decorated by the signing given by \eqref{eq:signing}. We will use this graph as a running example throughout this section. 

\begin{figure}[htbp]
    \centering
    \includegraphics[scale=0.8]{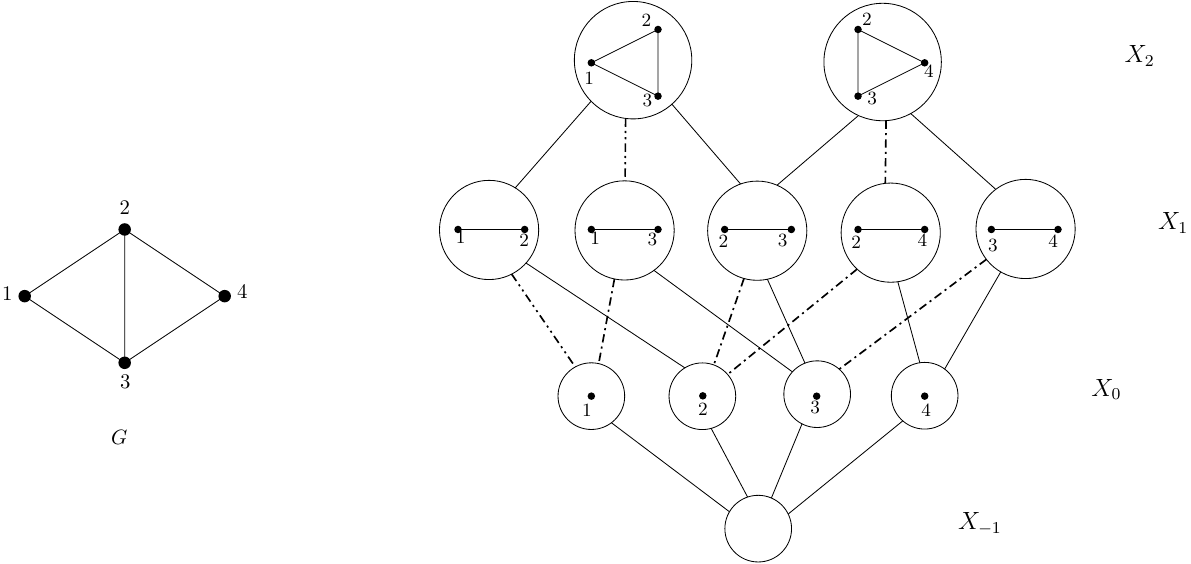}
    \caption{A graph $G$, \textit{left}, and its clique complex, \textit{right}. In the clique complex, the lines decorated with dots and dashes connect faces $F,K$ where $[F:K] = -1$ and the other lines connect faces $F,K$ where $[F:K] = 1$, where we take the natural ordering of the vertices, $1<2<3<4$.       \label{fig:simp-complex}}
    \label{graph1}
\end{figure}

Since we are interested in clique complexes of finite graphs, we will give simplified, finite-dimensional versions of the definitions, for readability. Note that we use $\Mat{U}{W}{\fld}$ to denote matrices with rows indexed by set $U$, columns indexed by $W$ and elements in field $\fld$. For $i\geq 0$, we use the notation $\re^{X_i}$ for the vector space $\{f:X_i\rightarrow \re\}$. The elements
of $\re^{X_i}$ are called $i$-dimensional cochains of $X$ with coefficients in $\re$. Biggs \cite[Def 4.1]{Biggs} calls $\re^{X_0}$ the vertex-space of $G$ and $\re^{X_1}$ the edge-space of $G$.

The \textit{coboundary map} $\delta_i \in \Mat{X_{i+1}}{X_i}{\re}$ is given by 
\[
(\delta_i)_{H,F} = [H:F],
\]
for $H\in X_{i+1}$ and $F \in X_{i}$. We can think of $\delta_i$ as an incidence matrix between the $(i+1)$-dimensional faces and the $i$-dimensional faces of the simplex, with a signing given by \eqref{eq:signing}. Analogously, the \textit{boundary map} $\partial_i \in \Mat{X_{i-1}}{X_i}{\re}$ is given by $\partial_{i} = \delta_{i-1}^T$.
To help keep track of these incidence matrices, we use the following diagram:
\[
\re^{X_{i+1}} \xleftrightharpoons[\partial_{i+1}]{\delta_i} \re^{X_i} \xleftrightharpoons[\partial_i]{\delta_{i-1}} \re^{X_{i-1}} .
\]

For the clique complex of any graph, the coboundary map $\delta_0$ is a signing of the usual edge-vertex incidence matrix of the graph. For the graph in  Figure \ref{fig:simp-complex}, we write $\delta_0$ and $\delta_1$ below: 
\[
\delta_0 = \begin{bmatrix}
-1 & 1 & 0 & 0 \\
-1 & 0 & 1 & 0 \\
0 & -1 & 1 & 0 \\
0 & -1 & 0 & 1 \\
0 & 0 & -1 & 1
\end{bmatrix}, \quad
\delta_1 = \begin{bmatrix}
1 & -1 & 1 & 0 &0 \\
0&0 &1 &-1 & 1 
\end{bmatrix},
\]
where the ordering of the rows and columns are given by the left to right ordering in Figure \ref{fig:simp-complex}. 

%\mutasimsays{$\delta_0$ and $\delta_1$ matrices are correct.}

%Moreover, the coboundary map $\delta_1$ is the transpose of $\partial_2$, i.e. $\delta_1 = \partial_2^T$.\\
% \begin{center}
% \includegraphics{pictures/Delta1.png}
% \end{center}
% \[
% \delta_1 = 
% \begin{bmatrix}
% 1&-1&0&0&1&0&0&0&0&0&0&0\\
% 1&0&0&-1&0&0&1&0&0&0&0&0\\
% 0&1&-1&0&0&0&0&0&1&0&0&0\\
% 0&0&1&-1&0&0&0&0&0&0&0&1
% \end{bmatrix}
% \]
% We do NOT use any of these, so I am commenting them out. 
% We have the following notations:
% \begin{align*}
% \textit{(i-dimensional coboundaries)    }B^i(X;\R)&:=\text{im }\delta_{i-1},\\
% \textit{(i-dimensional cocycles)    }Z^i(X;\R)&:=\ker\delta_i,\\
% \textit{(space of boundaries)    }B_i(X;\R)&:=\text{im }\partial_{i+1},\\
% \textit{(space of cycles)    }Z_i(X;\R)&:=\ker\partial_i.
% \end{align*}

We define the \textit{Down Laplacian} $L_i^\downarrow$, the \textit{Up Laplacian} $L_i^\uparrow $, and the \textit{Total Laplacian} $L_i$ by
\[
L_i^\downarrow := \delta_{i-1}\delta_{i-1}^T,\ \ \ \ L_i^\uparrow:= \delta_{i}^T\delta_i,\ \ \ \ L_i := L_i^\downarrow+L_i^\uparrow.\]
For the clique complex of any graph, the up-Laplacian $L_0^\uparrow$ coincides with the usual Laplacian of the graph while the Down Laplacian $L_0^\downarrow$ is the all one $X_0\times X_0$ matrix $J$. In this setting, we will refer to $L_1^\uparrow(G)$ as the \textit{up-Laplacian matrix associated with the graph $G$}, and we write $L_1^\uparrow$ when the context is clear. Similarly, we will refer to $L_1^\downarrow(G)$ as the \textit{down Laplacian matrix associated with the graph $G$}, and we write $L_1^\downarrow$ when the context is clear.
For the graph in Figure \ref{fig:simp-complex}, these matrices are:
\[
{L_1^\downarrow = \begin{bmatrix}
2 & 1 & -1 & -1 & 0 \\
1 & 2 & 1 & 0 & -1 \\
-1 & 1 & 2 & 1 & -1 \\
-1 & 0 & 1 & 2 & 1 \\
0 & -1 & -1 & 1 & 2
\end{bmatrix}, \quad
L_1^\uparrow = \begin{bmatrix}
1 & -1 & 1 & 0 & 0 \\
-1 & 1 & -1 & 0 & 0 \\
1 & -1 & 2 & -1 & 1 \\
0 & 0 & -1 & 1 & -1 \\
0 & 0 & 1 & -1 & 1
\end{bmatrix}, \quad
L_1 = \begin{bmatrix}
3 & 0 & 0 & -1 & 0 \\
0 & 3 & 0 & 0 & -1 \\
0 & 0 & 4 & 0 & 0 \\
-1 & 0 & 0 & 3 & 0 \\
0 & -1 & 0 & 0 & 3
\end{bmatrix}. }
\]

%\mutasimsays{Typos in the $L_1$ matrices have been fixed.}

The following results describe some basic properties of these matrices (see \cite{BGP} for proofs and more details).
%---------------------lem-----------------------------
\begin{lemma}\label{lem:basics} For any $i\geq 1$,
the following hold 
\begin{enumerate}[(i)]
\item $\delta_i \circ \delta_{i-1} = 0$ and equivalently, $\im \delta_{i-1}\subseteq\ker\delta_i$;
\item $L_i^\uparrow L_i^\downarrow = L_i^\downarrow L_i^\uparrow = 0$;
\item $\delta_{i-1}^T \circ \delta_i^T = 0;$ and
\item $\ker L_i^\downarrow = \ker\partial_i,$ $\im L_i^\downarrow = \im \delta_{i-1},$ $\ker L_i^\uparrow = \ker \delta_i,$ $\im L_i^\uparrow = \im\partial_{i+1}$. \qedbox
\end{enumerate}
\end{lemma}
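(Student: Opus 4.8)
The plan is to derive the entire lemma from the single combinatorial identity in part (i) --- the simplicial analogue of ``$\partial^2=0$'' --- and to obtain the remaining items either by transposing (i) or by invoking standard facts about real matrices of the form $AA^T$.

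For (i), I would compute the entries of the product $\delta_i\delta_{i-1}\in\Mat{X_{i+1}}{X_{i-1}}{\re}$ directly. Fix $H\in X_{i+1}$ and $F\in X_{i-1}$; then $(\delta_i\delta_{i-1})_{H,F}=\sum_{K\in X_i}[H:K][K:F]$, and by \eqref{eq:signing} a summand is nonzero only when $F\subset K\subset H$, which forces $|H\setminus F|=2$. Writing $H=\{x_0<x_1<\dots<x_{i+1}\}$ and $H\setminus F=\{x_a,x_b\}$ with $a<b$, the only two intermediate faces are $K_1=H\setminus\{x_a\}$ and $K_2=H\setminus\{x_b\}$. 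Since deleting the earlier vertex $x_a$ moves $x_b$ from position $b$ to position $b-1$, while deleting the later vertex $x_b$ leaves $x_a$ at position $a$, one gets $[H:K_1][K_1:F]=(-1)^a(-1)^{b-1}$ and $[H:K_2][K_2:F]=(-1)^b(-1)^a$, whose sum is $(-1)^{a+b}(-1+1)=0$. When $|H\setminus F|\neq 2$ the sum is empty, so $\delta_i\delta_{i-1}=0$; the inclusion $\im\delta_{i-1}\subseteq\ker\delta_i$ is the immediate reformulation. Part (iii) then follows by transposition: $\delta_{i-1}^T\delta_i^T=(\delta_i\delta_{i-1})^T=0$.

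Part (ii) is a matter of substituting the definitions and regrouping: $L_i^\uparrow L_i^\downarrow=\delta_i^T(\delta_i\delta_{i-1})\delta_{i-1}^T=0$ by (i), and $L_i^\downarrow L_i^\uparrow=\delta_{i-1}(\delta_{i-1}^T\delta_i^T)\delta_i=0$ by (iii). For (iv) I would use the two elementary facts, valid for any real matrix $A$, that $\ker(AA^T)=\ker(A^T)$ and $\im(AA^T)=\im(A)$: the first because $AA^Tx=0$ implies $\lVert A^Tx\rVert^2=x^TAA^Tx=0$, and the second because $\im(AA^T)\subseteq\im(A)$ while the orthogonal decomposition of the domain of $A$ as $\im(A^T)\oplus\ker(A)$ lets one replace any preimage under $A$ of a given vector by one already lying in $\im(A^T)$. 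Applying these with $A=\delta_{i-1}$ gives $\ker L_i^\downarrow=\ker\delta_{i-1}^T=\ker\partial_i$ and $\im L_i^\downarrow=\im\delta_{i-1}$, and applying them with $A=\delta_i^T$ gives $\ker L_i^\uparrow=\ker\delta_i$ and $\im L_i^\uparrow=\im\delta_i^T=\im\partial_{i+1}$, using $\partial_i=\delta_{i-1}^T$ and $\partial_{i+1}=\delta_i^T$.

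The only step requiring genuine care is the sign bookkeeping in (i): one must correctly track how the deletion of one of the two vertices $x_a,x_b$ shifts the index of the other inside the intermediate face, which is exactly what makes the two terms cancel rather than add. Everything else is either a direct transpose of (i) or a textbook property of Gram-type matrices.
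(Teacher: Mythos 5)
Your proof is correct and complete: the sign computation in (i) correctly tracks how deleting the earlier vertex $x_a$ shifts $x_b$ to position $b-1$ while deleting $x_b$ leaves $x_a$ at position $a$, giving the cancellation $(-1)^{a+b-1}+(-1)^{a+b}=0$; parts (ii) and (iii) follow by regrouping and transposition as you say, and (iv) is the standard Gram-matrix fact $\ker(AA^T)=\ker(A^T)$, $\im(AA^T)=\im(A)$ applied with $A=\delta_{i-1}$ and $A=\delta_i^T$. The paper does not include its own proof of this lemma (it cites \cite{BGP}), and your argument is exactly the standard one that reference supplies, so there is nothing to flag.
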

For $j\geq 1$, the $j$-th cohomology group is defined as $H^j=\ker(\delta_j)/\im(\delta_{j-1})$ for $j\geq 1$. Biggs \cite{Biggs} refers to $\ker \delta_0$ as the cycle space of the graph $G$ (see also Section \ref{subsec:Tch}) and to $\ker\partial_1=\ker(\delta_0^T)$ as the cut space of $G$. Using the previous proposition, one can show that the non-zero spectrum of $L_i$ is the union of the non-zero spectrum of $L_i^{\uparrow}$ and the non-zero spectrum of $L_i^{\downarrow}$, see also \cite{MullasHorakJost}.

% [this  does not need hodge theory -- that is only for infinite hilbert spaces] If we denote $\mathcal{H}_i(X;\R):=Z_i(X;\R)\cap Z^i(X;\R)$, then we have the \textit{Hodge decomposition} of $\mathcal{C}^i(X;\R)$ (see \cite{BGP} for more details):
% %---------------------prop-----------------------------
% \begin{prop}
% For any $i\geq 1$, $\mathcal{C}^i(X;\R) = \mathcal{H}_i(X;\R)\oplus B^i(X;\R)\oplus B_i(X;\R)$.
% \end{prop}
Similar to the notion of a vertex degree, one can define the degree of an arbitrary face as follows.
\begin{defn}
The number of $(i+1)$-faces of $X$ that contain a given $i$-dimensional face $F$ is called the \textit{degree} of $F$, and is denoted by $\deg(F)$.
\end{defn}
\noindent Moreover, given $(F,F')\in X_i^2$, define
\begin{equation}\label{eq:epsilon}    \epsilon_{F,F'} =
\begin{cases}
        [F:F\cap F'][F':F\cap F'] & \text{if }|F\cap F'| = i,\\
        0 & \text{otherwise. }
\end{cases}
\end{equation}

\begin{lemma}\label{lem:epsLdown}
If $(F,F')\in X_i^2$ such that $F\cup F'\in X_{i+1}$, then
\begin{equation}\label{eq:epsFF'}
\epsilon_{F,F'}=[F:F\cap F'][F':F\cap F'] = -[F\cup F':F][F\cup F':F'].
\end{equation}
Furthermore, we can express the matrix form of $L_i^\downarrow$ and the matrix form of $L_i^\uparrow$ entry-wise as follows:
\begin{equation*}    (L_i^\downarrow)_{F,F'} =
\begin{cases}
        i+1, & \text{if }F = F',\\
        \epsilon_{F,F'}, & \text{if } |F\cup F'| = i+2,\\
        0, & \text{otherwise. }
\end{cases}
\end{equation*}
and
\begin{equation*}    (L_i^\uparrow)_{F,F'} =
\begin{cases}
        \deg(F), & \text{if }F = F',\\
        -\epsilon_{F,F'}, & \text{if } F\cup F'\in X_{i+1}\\
        0, & \text{otherwise. }
\end{cases} \qedbox
\end{equation*}
\end{lemma}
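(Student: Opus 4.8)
The plan is to establish the sign identity \eqref{eq:epsFF'} first and then read off the two matrix formulas directly from the definitions $L_i^\downarrow = \delta_{i-1}\delta_{i-1}^T$ and $L_i^\uparrow = \delta_i^T\delta_i$.

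For \eqref{eq:epsFF'}: since $F\cup F'\in X_{i+1}$ we have $|F| = |F'| = i+1$, $|F\cup F'| = i+2$ and $|F\cap F'| = i$, so $F = (F\cap F')\cup\{a\}$, $F' = (F\cap F')\cup\{b\}$ with $a\neq b$, and $F\cup F' = (F\cap F')\cup\{a,b\}$. Assume without loss of generality $a<b$ (the case $a>b$ follows by swapping the roles of $F$ and $F'$). Let $s$ and $t$ be the numbers of elements of $F\cap F'$ that are less than $a$ and less than $b$ respectively. Reading \eqref{eq:signing} directly: $a$ sits in position $s$ of the sorted list of $F$, so $[F:F\cap F'] = (-1)^s$; $b$ sits in position $t$ of $F'$, so $[F':F\cap F'] = (-1)^t$; $b$ sits in position $t+1$ of $F\cup F'$ (the shift by one coming from $a<b$), hence $[F\cup F':F] = (-1)^{t+1}$ since $(F\cup F')\setminus F = \{b\}$; and $a$ sits in position $s$ of $F\cup F'$, hence $[F\cup F':F'] = (-1)^s$ since $(F\cup F')\setminus F' = \{a\}$. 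Multiplying gives $[F:F\cap F'][F':F\cap F'] = (-1)^{s+t}$ and $-[F\cup F':F][F\cup F':F'] = -(-1)^{s+t+1} = (-1)^{s+t}$, so the two sides agree. The leftmost equality in \eqref{eq:epsFF'} is then immediate from the definition \eqref{eq:epsilon}, using $|F\cap F'| = i$.

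Next I would compute $L_i^\downarrow = \delta_{i-1}\delta_{i-1}^T$ entrywise: $(L_i^\downarrow)_{F,F'} = \sum_{G\in X_{i-1}}[F:G][F':G]$, and a summand is nonzero only when $G\subset F$ and $G\subset F'$ with $|F\setminus G| = |F'\setminus G| = 1$. Since $|G| = i$, this is possible only if $F\cap F'$ has exactly $i$ elements, and in that case $G = F\cap F'$ is forced. So for $F = F'$ the sum ranges over all $i$-element subsets $G$ of the $(i+1)$-set $F$, each contributing $[F:G]^2 = 1$, for a total of $i+1$; for $F\neq F'$ with $|F\cup F'| = i+2$ the unique surviving term is $[F:F\cap F'][F':F\cap F'] = \epsilon_{F,F'}$; and in every remaining case the sum is $0$, which equals $\epsilon_{F,F'}$ by the ``otherwise'' branch of \eqref{eq:epsilon}. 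This is exactly the stated formula for $L_i^\downarrow$. The computation for $L_i^\uparrow = \delta_i^T\delta_i$ is the mirror image, summing over $(i+1)$-faces: $(L_i^\uparrow)_{F,F'} = \sum_{H\in X_{i+1}}[H:F][H:F']$, a summand being nonzero only when $F\cup F'\subseteq H$ and $|H| = i+2$. For $F = F'$ this counts the $(i+1)$-faces containing $F$, i.e.\ $\deg(F)$; for $F\neq F'$ the only candidate is $H = F\cup F'$, which contributes precisely when $F\cup F'\in X_{i+1}$, and then the term is $[F\cup F':F][F\cup F':F'] = -\epsilon_{F,F'}$ by \eqref{eq:epsFF'}; otherwise the sum is $0$. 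This gives the claimed entrywise description of $L_i^\uparrow$.

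The only place that needs genuine care is the sign chase proving \eqref{eq:epsFF'} --- specifically keeping track of how inserting the element $a$ shifts the position of $b$ in $F\cup F'$ while the position of $a$ is unchanged (because $a<b$). Once that identity is in hand, both matrix formulas are routine unpackings of a matrix product, the key structural observation being merely that the common $(i-1)$-face below, or common $(i+1)$-face above, a pair $F\neq F'$ is unique whenever it exists.
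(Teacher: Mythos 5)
Your proof is correct. The paper does not supply its own proof of this lemma (it is stated with a pointer to the reference \cite{BGP}), and your argument --- the position-counting sign chase giving $[F:F\cap F'][F':F\cap F'] = -[F\cup F':F][F\cup F':F']$, followed by the entrywise expansion of $\delta_{i-1}\delta_{i-1}^T$ and $\delta_i^T\delta_i$ using the uniqueness of the common face below or above a pair $F\neq F'$ --- is precisely the standard argument that the citation stands in for.
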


Note that the spectra of the up and down Laplacians do not depend on the choice of ordering of the vertices and we include a proof of this fact in Appendix  \ref{a:inv} for completeness.

% \sebisays{Feb 9, 2021. I think Prop 2.7 implies that the ordering does not matter.} \krystalsays{No, it does not; it uses $\epsilon$, which uses the signing, which comes from the ordering. }
% %------------------------------ --------------------
% \mutasimsays{A lemma with proof has been added below. I have not come across any paper that includes a proof of this fact, so, it might be worthwhile to keep it in our paper.}

Recall the following linear algebraic property, which can be found in standard texts (see \cite[Theorem 1.3.20]{HJ} for example). 

\begin{lemma}\label{nonzero}
If A and B are $m\times n$ real matrices, then the characteristic polynomials of $AB^T$ and $B^T A$ differ by a factor of $x^{m-n}$. Consequently,  $AB^T$ and $B^T A$ have the same non-zero eigenvalues, with the same multiplicities. \qedbox 
\end{lemma}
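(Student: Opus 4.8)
The plan is to establish the single polynomial identity
\[
x^{n}\det(xI_m - AB^T) = x^{m}\det(xI_n - B^T A),
\]
from which the statement about characteristic polynomials, and hence about nonzero eigenvalues with multiplicities, is immediate. The engine is a Schur-complement computation applied to one auxiliary block matrix.

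First I would fix a scalar $x\neq 0$ and consider the $(m+n)\times(m+n)$ block matrix
\[
P = \begin{pmatrix} xI_m & A \\ B^T & I_n \end{pmatrix}.
\]
Evaluating $\det P$ via the Schur complement of the (always invertible) bottom-right block $I_n$ gives $\det P = \det(I_n)\det\!\big(xI_m - A\,I_n^{-1}B^T\big) = \det(xI_m - AB^T)$. Evaluating $\det P$ instead via the Schur complement of the top-left block $xI_m$, which is invertible precisely because $x\neq 0$, gives $\det P = \det(xI_m)\det\!\big(I_n - B^T(xI_m)^{-1}A\big) = x^{m}\det\!\big(I_n - \tfrac1x B^T A\big) = x^{m-n}\det(xI_n - B^T A)$. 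Equating the two expressions yields $\det(xI_m - AB^T) = x^{m-n}\det(xI_n - B^T A)$ for every $x\neq 0$.

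Next I would promote this to an identity of polynomials: both $x^{n}\det(xI_m - AB^T)$ and $x^{m}\det(xI_n - B^T A)$ are polynomials in $x$ that agree at infinitely many values (all $x\neq 0$), hence coincide identically. Comparing the multiplicities of roots then shows that the characteristic polynomials of $AB^T$ and $B^T A$ agree at every nonzero value and differ only in the power of $x$ dividing them, which is exactly the factor $x^{m-n}$ asserted (with the obvious interpretation, namely the factor sitting on the other side, when $m<n$). Reading off the roots with multiplicity gives the final sentence about equal nonzero eigenvalues.

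I do not expect a genuine obstacle here; this is a textbook fact (cf.\ \cite[Theorem 1.3.20]{HJ}). The only point requiring a little care is that the two Schur-complement formulas need invertible pivot blocks: the pivot $I_n$ is always invertible, but the pivot $xI_m$ is invertible only away from the origin, which is why the determinant identity is first obtained pointwise for $x\neq 0$ and then extended to all $x$ by the polynomial-identity principle. I would keep the written proof correspondingly short.
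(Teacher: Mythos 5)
Your argument is correct: the two Schur-complement evaluations of $\det\begin{pmatrix} xI_m & A\\ B^T & I_n\end{pmatrix}$ give $\det(xI_m-AB^T)$ and $x^{m-n}\det(xI_n-B^TA)$ for $x\neq 0$, and the polynomial-identity principle extends this to all $x$, which is exactly the claimed relation between the characteristic polynomials. The paper does not prove this lemma at all — it cites it as a standard fact from Horn and Johnson \cite[Theorem 1.3.20]{HJ} — so your proposal simply supplies the standard textbook proof, and it does so correctly, including the care about the invertibility of the pivot $xI_m$ and the interpretation of the factor $x^{m-n}$ when $m<n$.
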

%---------------------prop-----------------------------

Applying Lemma \ref{nonzero} to the down Laplacian, we obtain the following. 

\begin{lemma}\label{down}
Given a graph $G$, the non-zero eigenvalues of down Laplacian $L_1^\downarrow(G)$ (with their multiplicities) and the non-zero eigenvalues of ordinary Laplacian $L(G)$ (with their multiplicities) are the same.
\end{lemma}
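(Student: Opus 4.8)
The plan is to apply Lemma~\ref{nonzero} with a suitable choice of matrices $A$ and $B$. Recall that $L_1^\downarrow = \delta_0 \delta_0^T$ by definition, and that the coboundary map $\delta_0 \in \Mat{X_1}{X_0}{\re}$ is a signing of the usual edge-vertex incidence matrix of $G$. On the other hand, the ordinary Laplacian $L(G)$ coincides with $L_0^\uparrow = \delta_0^T \delta_0$, as noted in the paper immediately after the definition of the Laplacians. So if I set $A = \delta_0$ and $B = \delta_0$ — both of which are $m \times n$ matrices with $m = |X_1|$ (the number of edges) and $n = |X_0|$ (the number of vertices) — then $AB^T = \delta_0\delta_0^T = L_1^\downarrow(G)$ and $B^T A = \delta_0^T \delta_0 = L(G)$.

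Now Lemma~\ref{nonzero} states that $AB^T$ and $B^T A$ have the same non-zero eigenvalues with the same multiplicities, which is exactly the claim. The only point that requires a word of care is that Lemma~\ref{nonzero} as quoted speaks of $AB^T$ and $B^TA$, so one must take $A = B = \delta_0$ rather than, say, $A = \delta_0$ and $B = \delta_0^T$; with $A = B = \delta_0$ the dimensions match and the two products are precisely $L_1^\downarrow$ and $L$. One could alternatively invoke the standard fact that $MM^T$ and $M^TM$ are cospectral away from zero and apply it to $M = \delta_0$; these are the same statement.

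There is essentially no obstacle here: the proof is a one-line substitution into Lemma~\ref{nonzero}, and the main thing to get right is the bookkeeping identification $L_1^\downarrow = \delta_0\delta_0^T$ and $L = \delta_0^T\delta_0$, both of which are recorded in Section~\ref{sec:defn}. The signing of the incidence matrix given by \eqref{eq:signing} is irrelevant for this argument, since changing signs of rows or columns of $\delta_0$ does not affect either product's spectrum; but we do not even need to observe this, as Lemma~\ref{nonzero} applies to $\delta_0$ directly regardless of its entries.
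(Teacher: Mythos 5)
Your proposal is correct and follows exactly the paper's argument: the paper likewise observes $L_1^\downarrow=\delta_0\delta_0^T$ and $L(G)=L_0^\uparrow=\delta_0^T\delta_0$ and applies Lemma~\ref{nonzero}. The extra remark about taking $A=B=\delta_0$ is just careful bookkeeping and does not change the substance.
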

\begin{proof}
Because $L_1^{\downarrow}=\delta_0\delta_0^{T}$ and $L(G)=L_0^{\uparrow}=\delta_0^T\delta_0$, Lemma \ref{nonzero} implies the desired result.
\end{proof}

%---------------------prop--------------------------
\begin{prop}
If $G$ is a simple graph, then $\dim(\ker(L_1^\uparrow(G))\geq |V(G)|-1$.
\end{prop}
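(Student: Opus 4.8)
The plan is to reduce the inequality to a rank computation for the coboundary map $\delta_1$, and then bound that rank using the relation $\delta_1\delta_0=0$ together with the fact that $\delta_0$ is an oriented incidence matrix of $G$.

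First I would observe that $L_1^\uparrow=\delta_1^T\delta_1$ is positive semidefinite, so a cochain $f\in\re^{X_1}$ lies in $\ker L_1^\uparrow$ exactly when $\|\delta_1 f\|^2=f^T L_1^\uparrow f=0$; that is, $\ker L_1^\uparrow=\ker\delta_1$, as also recorded in Lemma~\ref{lem:basics}(iv). Hence it suffices to prove $\dim\ker\delta_1\geq|V(G)|-1$.

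Next, Lemma~\ref{lem:basics}(i) gives $\im\delta_0\subseteq\ker\delta_1$, so
\[
\dim\ker L_1^\uparrow=\dim\ker\delta_1\;\geq\;\dim\im\delta_0=\operatorname{rank}(\delta_0),
\]
and it remains only to evaluate $\operatorname{rank}(\delta_0)$. By the signing in~\eqref{eq:signing}, the row of $\delta_0$ indexed by an edge $\{u,v\}$ with $u<v$ has $-1$ in column $u$, $+1$ in column $v$, and zeros elsewhere, so $\delta_0$ is an oriented vertex--edge incidence matrix of $G$. Its rank is classical: $\operatorname{rank}(\delta_0)=\operatorname{rank}(\delta_0^T\delta_0)=\operatorname{rank}(L_0^\uparrow)$, and $L_0^\uparrow$ is the ordinary Laplacian of $G$, whose nullity equals the number $c$ of connected components. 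Thus $\operatorname{rank}(\delta_0)=|V(G)|-c$, and the displayed inequality yields $\dim\ker L_1^\uparrow\geq|V(G)|-c\geq|V(G)|-1$ once $G$ is connected. (If one prefers not to quote the nullity of the Laplacian, it is just as easy to choose a spanning tree $T$ of $G$ and verify, by induction on the leaves of $T$, that the $|V(G)|-1$ rows of $\delta_0$ indexed by the edges of $T$ are linearly independent.)

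No step here is genuinely hard: the content is the chain of inclusions $\ker L_1^\uparrow=\ker\delta_1\supseteq\im\delta_0$ together with the elementary rank of an incidence matrix. The one point deserving care is the connectivity hypothesis, since for a disconnected $G$ the argument only delivers $\dim\ker L_1^\uparrow\geq|V(G)|-c$, and the stated bound really can fail in that case (two disjoint edges already give $\dim\ker L_1^\uparrow=2<3$); so I would prove the proposition under the assumption that $G$ is connected, or else replace $|V(G)|-1$ by $|V(G)|$ minus the number of connected components.
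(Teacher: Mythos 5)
Your proof is correct and takes essentially the same route as the paper's: the inclusion $\im\delta_0\subseteq\ker\delta_1=\ker L_1^\uparrow$ from Lemma \ref{lem:basics} combined with $\operatorname{rank}(\delta_0)=|V(G)|-1$ for a connected graph. Your caveat about connectivity is well taken — the paper's own proof also starts from "Because $G$ is connected," so the hypothesis in the statement should indeed be read as connected (your two-disjoint-edges example shows the bound fails otherwise).
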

\begin{proof}
Because $G$ is connected, $\ker\delta_0$ is the $1$-dimensional subspace of $\re^{V(G)}$ spanned by the all one vector (see \cite[Ch. 4]{Biggs} for example). By the Rank-Nullity theorem, $\dim(\text{im }\delta_0)=|V(G)|-1$. Lemma \ref{lem:basics} implies that $\text{im }\delta_0\subseteq\ker\delta_1=\ker L_1^{\uparrow}(G)$ and therefore, $|V(G)|-1=\dim(\im(\delta_0))\leq \dim(\ker(L_1^\uparrow(G)))$.
\end{proof}
If the distinct eigenvalues of a real and symmetric matrix $M$ are $\theta_1>\theta_2>\ldots > \theta_d$ with respective multiplicities $m_1,\ldots,m_d$, we will often write the spectrum of $M$ in the following array:
\[
\begin{pmatrix}
\theta_d & \cdots &\theta_1\\
m_d& \cdots & m_1
\end{pmatrix}. \]

We will use the proposition below regarding the spectrum of up-Laplacian on $K_n^{k-1}$, the clique complex on $[n]$ with all subsets of size at most $k$. This result has been obtained in different contexts, see \cite[Lemma 8]{GW} or \cite[Example 2.3]{BGP}.
\begin{prop}[\cite{BGP,GW}]\label{prop:kn-eigs}
    For all $i\leq k$, the spectrum of $L^{\uparrow}_{i}(K_n^{k})$ is 
    \[
        \begin{pmatrix}
        0 & n\\
        \binom{n-1}{i} & \binom{n-1}{i+1}
        \end{pmatrix}.
    \]    
\end{prop}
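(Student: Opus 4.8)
The plan is to compute the full spectrum of the up-Laplacian $L^{\uparrow}_i$ on the complete complex $K_n^{k}$ directly, using the fact that on $K_n^{k}$ every set of size $i+2$ is a face, so $\delta_i$ has maximal support and its Gram matrices have a clean combinatorial description. I would actually prefer to work with the Hodge-theoretic decomposition: by Lemma \ref{lem:basics}(iv), $\re^{X_i} = \im\partial_{i+1}\oplus\ker\partial_i$ is not quite what I want; instead I will use the orthogonal decomposition $\re^{X_i}=\ker\delta_i\oplus\im\delta_{i-1}$ (valid since $\im\delta_{i-1}\subseteq\ker\delta_i$ and both are finite dimensional, so the orthogonal complement of $\im\delta_{i-1}$ inside $\ker\delta_i$ is the reduced cohomology, which vanishes for the full simplex). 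Concretely, $L^{\uparrow}_i$ kills $\ker\delta_i$ and acts as $L^{\uparrow}_i=\delta_i^T\delta_i$ on $\im\delta_{i-1}^{\perp}\cap(\ldots)$; the point is that for $K_n^{k}$ the complex is acyclic in all positive degrees, so $\ker\delta_i=\im\delta_{i-1}$, and hence $\ker L^{\uparrow}_i=\im\delta_{i-1}$ has dimension $\binom{n-1}{i}$ (the rank of $\partial_i=\delta_{i-1}^T$ on the full simplex is $\binom{n-1}{i}$ by the standard Euler-characteristic/rank computation, or by induction).

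Next I would show the only nonzero eigenvalue of $L^{\uparrow}_i(K_n^{k})$ is $n$, with multiplicity $\binom{n-1}{i+1}$. The slick way is via the companion identity $L^{\downarrow}_{i+1}+L^{\uparrow}_{i+1}$ style relation, or more directly: on $K_n^{k}$ one has the operator identity $L^{\uparrow}_i + L^{\downarrow}_i = n\cdot I$ on $\re^{X_i}$. Indeed $(L^{\uparrow}_i)_{F,F}=\deg(F)=n-(i+1)$ since $F$ has $i+1$ vertices and every one of the remaining $n-(i+1)$ vertices extends it to a face; and $(L^{\downarrow}_i)_{F,F}=i+1$ by Lemma \ref{lem:epsLdown}. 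For $F\ne F'$ with $|F\cup F'|=i+2$ the off-diagonal entries are $-\epsilon_{F,F'}$ and $\epsilon_{F,F'}$ respectively, which cancel; all other off-diagonal entries of both matrices vanish. Hence $L^{\uparrow}_i+L^{\downarrow}_i=(n-(i+1)+(i+1))I=nI$. Since by Lemma \ref{lem:basics}(ii) $L^{\uparrow}_i$ and $L^{\downarrow}_i$ commute (their product is zero), they are simultaneously diagonalizable, and on each common eigenvector the eigenvalues sum to $n$; moreover $\ker L^{\uparrow}_i\cap\ker L^{\downarrow}_i=\ker L_i = \ker\delta_i\cap\ker\partial_i = \widetilde{H}^i$, which is $0$ for the full simplex. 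Therefore every eigenvector is in exactly one of $\ker L^{\uparrow}_i$ (eigenvalue $0$ for $L^{\uparrow}_i$, eigenvalue $n$ for $L^{\downarrow}_i$) or $\ker L^{\downarrow}_i$ (eigenvalue $n$ for $L^{\uparrow}_i$). So the nonzero eigenvalue of $L^{\uparrow}_i$ is exactly $n$.

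Finally I would pin down the multiplicities by dimension counting. We have $\dim\re^{X_i}=\binom{n}{i+1}$, the multiplicity of $0$ is $\dim\ker L^{\uparrow}_i=\dim\im\delta_{i-1}=\binom{n-1}{i}$ (from acyclicity plus rank-nullity applied up the chain, starting from $\dim\ker\delta_0=1$ for $n\ge 1$), and the multiplicity of $n$ is the complement $\binom{n}{i+1}-\binom{n-1}{i}=\binom{n-1}{i+1}$ by Pascal's rule. This gives exactly the claimed array. The main obstacle, and the one step deserving genuine care rather than hand-waving, is the acyclicity claim $\ker\delta_i=\im\delta_{i-1}$ for $K_n^{k}$ (equivalently $\widetilde{H}^i(K_n^{k})=0$ for $0\le i\le k$): one must either cite that the full $(k{-}1)$-skeleton of the simplex $\Delta^{n-1}$ is $(k{-}2)$-connected — standard, but should be referenced — or give the direct inductive rank computation $\operatorname{rank}\delta_{i-1}=\binom{n-1}{i}$ by contracting on a fixed vertex. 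Everything else is the routine entrywise verification of $L^{\uparrow}_i+L^{\downarrow}_i=nI$ via Lemma \ref{lem:epsLdown} and the commutativity from Lemma \ref{lem:basics}.
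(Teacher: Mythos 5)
The paper itself contains no proof of Proposition \ref{prop:kn-eigs}: it is quoted from \cite{GW} (Lemma 8) and \cite{BGP} (Example 2.3). Judged on its own, your argument is essentially correct and is the standard self-contained route: the entrywise identity $L_i^{\uparrow}+L_i^{\downarrow}=nI$ on $\re^{X_i}$ (read off from Lemma \ref{lem:epsLdown}) combined with $L_i^{\uparrow}L_i^{\downarrow}=L_i^{\downarrow}L_i^{\uparrow}=0$ (Lemma \ref{lem:basics}(ii)) forces every eigenvalue of $L_i^{\uparrow}$ into $\{0,n\}$, and a dimension count finishes. Two points need repair, though. First, your displayed ``orthogonal decomposition'' $\re^{X_i}=\ker\delta_i\oplus\im\delta_{i-1}$ is not a decomposition at all, since $\im\delta_{i-1}\subseteq\ker\delta_i$; what you actually use is the equality $\ker L_i^{\uparrow}=\ker\delta_i=\im\delta_{i-1}$ (acyclicity in degree $i$), so this is only a cosmetic slip. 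Second, and more substantively, your premise that every $(i+2)$-subset of $[n]$ is a face of $K_n^{k}$ --- hence $\deg(F)=n-i-1$ and the identity $L_i^{\uparrow}+L_i^{\downarrow}=nI$ --- requires $i\le k-1$ (or $k\ge n-1$). At $i=k<n-1$ one has $L_k^{\uparrow}(K_n^k)=0$, so both your premise and the proposition as literally stated fail there; the paper only ever applies the result with $i\le k-1$ (e.g.\ $L_i^{\uparrow}(K_{s+1}^{s})$ with $i<s$, $L_2^{\uparrow}(K_{n-1}^{3})$, $L_k^{\uparrow}(K_{n-1}^{k+1})$), so you should state your range as $i\le k-1$ rather than inherit the statement's boundary sloppiness.

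Also, the step you single out as the main obstacle --- acyclicity, i.e.\ $\operatorname{rank}\delta_{i-1}=\binom{n-1}{i}$ --- can be bypassed entirely. Once the spectrum is known to lie in $\{0,n\}$, the multiplicity of $n$ is $\operatorname{tr}(L_i^{\uparrow})/n=\binom{n}{i+1}(n-i-1)/n=\binom{n-1}{i+1}$, and the multiplicity of $0$ is the complement $\binom{n}{i+1}-\binom{n-1}{i+1}=\binom{n-1}{i}$ by Pascal's rule. This keeps the whole proof at the level of Lemma \ref{lem:epsLdown} plus elementary counting, with no appeal to connectivity of skeleta or a rank computation.
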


\section{Partial geometries and generalized quadrangles} \label{sec:PG-GQ}

%For a simplicial complex $S$, we write $X_i$ for the set of all $i$-faces of $S$. 
\begin{lemma}\label{lem:uniquefaces}
If $S$ is a simplicial complex such that each $i$-dimensional face lies on a unique $(i+1)$-dimensional face, then the spectrum of $L_i^\uparrow$ is
\[
\begin{pmatrix}
0 & i+2\\
(i+1)|X_{i+1}| & |X_{i+1}|
\end{pmatrix}.
\]
\end{lemma}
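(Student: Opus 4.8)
The plan is to use the entry-wise description of $L_i^\uparrow$ from Lemma \ref{lem:epsLdown} together with a direct block decomposition of the matrix. Since every $i$-dimensional face $F$ lies on a unique $(i+1)$-dimensional face, we have $\deg(F) = 1$ for all $F \in X_i$, so the diagonal of $L_i^\uparrow$ is all ones. For the off-diagonal entries, $(L_i^\uparrow)_{F,F'} = -\epsilon_{F,F'}$ whenever $F \cup F' \in X_{i+1}$, and this is nonzero only when $F$ and $F'$ lie on the same (necessarily unique) $(i+1)$-face. Thus $L_i^\uparrow$ is block-diagonal: the faces $X_i$ partition into blocks indexed by $X_{i+1}$, where the block corresponding to a given $(i+1)$-face $H$ consists of its $i+2$ facets (the $i$-faces contained in $H$), and there are $|X_{i+1}|$ such blocks, each of size $(i+2) \times (i+2)$.

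Next I would identify each block. Fix $H \in X_{i+1}$ with facets $F_0, \dots, F_{i+1}$. Within this block, the diagonal entries are $1$ and the off-diagonal entry in position $(F_a, F_b)$ is $-\epsilon_{F_a,F_b} = [H:F_a][H:F_b]$, using the identity \eqref{eq:epsFF'} from Lemma \ref{lem:epsLdown} (note $\epsilon_{F_a,F_b} = -[H:F_a][H:F_b]$ since $H = F_a \cup F_b$). So each block has the form $I + \zeta\zeta^T - \operatorname{diag}(\zeta)^2$ where $\zeta$ is the $\pm 1$ vector with entries $[H:F_a]$; since $\zeta_a^2 = 1$, this is exactly $\zeta\zeta^T$, a rank-one matrix. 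Hence each $(i+2)\times(i+2)$ block has eigenvalue $\zeta^T\zeta = i+2$ with multiplicity $1$ and eigenvalue $0$ with multiplicity $i+1$.

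Finally I would assemble the spectrum: the eigenvalues of a block-diagonal matrix are the union (with multiplicity) of the eigenvalues of the blocks. With $|X_{i+1}|$ blocks, we get $i+2$ with multiplicity $|X_{i+1}|$ and $0$ with multiplicity $(i+1)|X_{i+1}|$, which is the claimed spectrum. (As a sanity check, the total dimension is $(i+2)|X_{i+1}|$, which equals $|X_i|$ since each $i$-face lies on a unique $(i+1)$-face and each $(i+1)$-face has $i+2$ facets, all distinct.)

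The only mildly delicate point is verifying that each block is genuinely rank one, i.e.\ confirming the sign bookkeeping so that the off-diagonal entry $[H:F_a][H:F_b]$ combines with the diagonal $1 = [H:F_a]^2$ to produce a clean outer product; this follows directly from \eqref{eq:epsFF'}, so I do not anticipate a real obstacle here. One should also note explicitly that the blocks are disjoint and exhaust $X_i$, which is immediate from the uniqueness hypothesis.
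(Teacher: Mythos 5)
Your proof is correct and is essentially the paper's argument: the paper partitions $X_i$ into the sets of facets of each $(i+1)$-face, writes $\delta_i = I_{|X_{i+1}|}\otimes \Zy^T$ for a $\pm 1$ sign vector $\Zy$, and obtains $L_i^\uparrow = I_{|X_{i+1}|}\otimes \Zy\Zy^T$, which is exactly your block-diagonal matrix of rank-one outer products $\zeta\zeta^T$ with nonzero eigenvalue $i+2$. The only difference is cosmetic — you derive the block structure from the entrywise formula of Lemma \ref{lem:epsLdown} rather than from a Kronecker factorization of $\delta_i$ — and your sign bookkeeping via \eqref{eq:epsFF'} is sound.
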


\begin{proof}
We partition the $i$-dimensional faces of $S$ into $|X_{i+1}|$ parts: $X_i = \dotcup_{H \in X_{i+1}} Y_H$, 
where $Y_H=\{ F \in X_i \mid F \subset H\}$ is the set of $i$-dimensional faces contained in an $(i+1)$-dimensional face $H$. 
%\mutasimsays{The BGP lemma directly gives us the spectrum on each $Y_H$ block here.}
For $H=\{x_0,x_1,\ldots,x_{i+1}\} \in X_{i+1}$ with $x_0 < x_1 < \ldots <x_{i+1}$, we have that
\[
Y_H=\{H \setminus \{x_0\}, H \setminus \{x_1\}, \ldots, H \setminus \{x_i\}\}.
\]
Let $\Zy$ of the $(i+2)$-dimensional vector whose entries alternate between $1$ and $-1$, that is 
\[\Zy^T = \begin{pmatrix}
1 & -1 & 1 & \cdots & (-1)^{i+1}
\end{pmatrix}.
\]
For some ordering of the elements of $X_i$, we have that 
\[
\delta_i = I_{|X_{i+1}|} \otimes \Zy^T
\]
where $I_{|X_{i+1}|}$ is the $|X_{i+1}|\times |X_{i+1}|$ identity matrix. Thus, 
\[
L_i^\uparrow = \delta_i^T \delta_i = I_{|X_{i+1}|} \otimes \Zy\Zy^T.
\]
Since $\Zy\Zy^T$ is the all one matrix of order $i+2$ whose only non-zero eigenvalue is $i+2$ with eigenvector $\Zy$, the eigenvalues of $L_i^\uparrow$ are as claimed.
\end{proof}

A partial geometry $pg(K,R,T)$ is an incidence structure of points and lines with the following properties:
\begin{enumerate}
    \item Any line has $K$ points and any point is contained in $R$ lines.
    \item Any two points lie on at most one line.
    \item For any point $p$ and any line $L$ such that $p$ is not on $L$, there are exactly $T$ lines containing $p$ that intersect $L$.
\end{enumerate}
The point graph of the geometry has the points as vertices and two points are adjacent if they are contained in a line. This graph is strongly regular (see \cite[Chapter 21]{vLW} for more details on partial geometries and strongly regular graphs). When $T=1$, the partial geometry is called a generalized quadrangle and $GQ(K-1,R-1)$ is commonly used to denote $pg(K,R,1)$. 

In the point graph of $\GQ(s,t)$, the maximum cliques come from lines and are of order $s+1$. If adjacent vertices $u,v$ have a common neighbour $w$, then $u,v,w$ must all lie on some line. Thus, every $3$-clique is contained in a unique clique of maximum size.
% \mutasimsays{Changes in corollary 3.1 should reflect here. Also, corollary 3.3 contains corollary 3.2, and this can be removed altogether.}
Using Proposition \ref{prop:kn-eigs}, we determine the spectrum of many up-Laplacians of generalized quadrangles, as follows.

\begin{lemma}\label{lem:gq}
For the point graph of generalized quadrangle $\GQ(s,t)$, $s\geq 3$ and $1 \leq i < s$ the spectrum of $L_i^\uparrow$ is
\[
    \begin{pmatrix} 0 & s+1 \\ (t+1)(st+1) \binom{s}{i} & (t+1)(st+1)\binom{s}{i+1} \end{pmatrix}. \qedhere
\]
If $i \geq s$, the spectrum of $L_i^\uparrow$ consists only of $0$ eigenvalues.
\end{lemma}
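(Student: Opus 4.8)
The plan is to decompose the clique complex $\mathcal{K}(G)$, where $G$ is the point graph of $\GQ(s,t)$, according to the maximum cliques (which are the lines). The key structural fact, already noted in the paragraph preceding the lemma, is that every $3$-clique lies on a unique maximal clique of size $s+1$ (a line). I would first upgrade this to the statement that for $i\geq 2$, every $i$-dimensional face of $\mathcal{K}(G)$ lies inside a unique line: indeed any face of dimension $\geq 2$ contains a triangle, and that triangle already forces all its vertices onto one common line; the remaining vertices of the face are then also on that line since each is adjacent to the triangle. (For $i=1$, an edge lies on a unique line as well, since two points lie on at most one line, but an edge of $\mathcal{K}(G)$ need not lie on \emph{any} $2$-face if the line through it — when it exists — is just that edge; however lines of a $\GQ$ have $s+1\geq 4$ points, so in fact every edge does lie on a line with $\geq 4$ points.)

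Next I would set up the direct sum. Fix $i$ with $1\le i<s$. Because each $i$-face lies on exactly one line $L$ (a clique on $s+1$ points), the set $X_i$ partitions as $X_i=\dotcup_L X_i(L)$, where $X_i(L)$ consists of the $i$-faces contained in $L$. Crucially, two $i$-faces $F,F'$ contained in \emph{different} lines cannot satisfy $F\cup F'\in X_{i+1}$: such a union would be an $(i+1)$-face, hence (since $i+1\ge 2$) contained in a single line, forcing $F$ and $F'$ onto the same line. By the entrywise description of $L_i^\uparrow$ in Lemma \ref{lem:epsLdown}, the matrix $L_i^\uparrow(G)$ is therefore block diagonal with respect to this partition, and the block indexed by a line $L$ is exactly $L_i^\uparrow$ computed inside the clique complex $\mathcal{K}(K_{s+1})=K_{s+1}^{s}$ on the $s+1$ points of $L$ — the degrees and the $\epsilon_{F,F'}$ values only see faces inside $L$. (One should check the diagonal degree entry matches: the number of $(i+1)$-faces of $\mathcal{K}(G)$ containing $F$ equals the number of such faces inside $L$, again because any $(i+1)$-face containing $F$ lies on $L$.)

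Now apply Proposition \ref{prop:kn-eigs} with $n=s+1$, $k=s$: for $i\le s$ the spectrum of $L_i^\uparrow(K_{s+1}^{s})$ is $\binom{s}{i+1}$ eigenvalues equal to $s+1$ and $\binom{s}{i}$ eigenvalues equal to $0$. (Note that when $i=s$ this gives all-zero, consistent with the final sentence of the lemma.) Since a $\GQ(s,t)$ has $(t+1)(st+1)$ lines — this is the standard count, e.g.\ \cite[Chapter 21]{vLW} — summing the spectra over all lines yields eigenvalue $s+1$ with multiplicity $(t+1)(st+1)\binom{s}{i+1}$ and eigenvalue $0$ with multiplicity $(t+1)(st+1)\binom{s}{i}$, as claimed. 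Finally, for $i\ge s$: every $i$-face lies in a line of size $s+1$, so has at most $s+1$ vertices, meaning $X_i$ is empty for $i>s$ and for $i=s$ there are no $(i+1)$-faces at all, so $\delta_i=0$ and $L_i^\uparrow=0$; hence its spectrum is entirely $0$.

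I expect the main obstacle to be the careful bookkeeping in the $i=1$ case, since an edge of $G$ is a priori only known to lie on a line when a witnessing triangle exists, and one must verify that the block-diagonal structure and the degree computation still go through; the condition $s\ge 3$ in the hypothesis (so lines have $\ge 4$ points, and more to the point $i=1<s$) is what makes every edge participate in the line-based partition cleanly. Everything else is a direct assembly of the stated results.
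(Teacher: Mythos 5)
Your proposal is correct and follows essentially the same route as the paper: partition the $i$-faces by the unique line containing them, observe that $L_i^\uparrow$ is block diagonal with one block per line, identify each block with $L_i^\uparrow(K_{s+1}^{s})$, and apply Proposition \ref{prop:kn-eigs} together with the count of $(t+1)(st+1)$ lines. Your extra care with the $i=1$ case (ruling out nonzero entries between edges on distinct lines sharing a point, via the fact that any triangle lies on a line) is in fact slightly more explicit than the paper's own argument.
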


\begin{proof}
    Consider the clique complex $X$ of the point line graph $G$ of a $GQ(s,t)$. Then, no edge of $G$ (and thus, no $i$ simplex for $i > 0$ of $X$) is contained in more than one line of the $GQ$. Thus, in $X$, there are no $i$ simplex if $i > s$. Moreover, clearly $L_s^{\uparrow} = 0$.

    Now, fix $1 \leq i < s$. Then, the $(s+1)$ points on a line of the $GQ$ form $\binom{s+1}{i+1}$, $i-$ simplices. If $A$ and $B$ are two $i-$ simplices comprising of points from two distinct lines, then $|A \cap B| \leq 1$, and so, $L_i^{\uparrow}(A,B) = 0$. Thus, we see that $L_i^{\uparrow}$ has a block form with $(t+1)(st+1)$ blocks corresponding to the $(t+1)(st+1)$ lines in the $GQ.$ In addition, regardless of the ordering of the points on a line, the block is identical to the $i^{th}$ upper Laplacian matrix of the complex $K^{s}_{s+1}$. Thus, applying Proposition \ref{prop:kn-eigs} in each block and then taking the union of the spectra, we obtain the spectrum of $L_i^{\uparrow}$ as
    \[
    \begin{pmatrix} 0 & s+1 \\ (t+1)(st+1) \binom{s}{i} & (t+1)(st+1)\binom{s}{i+1} \end{pmatrix}. \qedhere
    \]
\end{proof}

\section{The spectra of the up-Laplacian \texorpdfstring{$L_1^\uparrow$}{L1-up} of some families of graphs} \label{sec:familes}

In this section, we determine the spectra of $L_1^\uparrow$ for some families of graphs.

\subsection{The Hamming Graphs}\label{subsec:Hamming}

Let $q$ and $n$ be two natural numbers. If $Q$ is a set of size $q$, the Hamming graph $H(q,n)$ has the collection $Q^n$ of ordered $n$-tuples or words of length $n$ with entries in $Q$ as its vertex set. Two words are adjacent if they differ in exactly one coordinate position.
The Hamming graph $H(q,n)$ is strongly regular when $q=2$ and the parameters of $H(n,2)$ are $(n^2,2(n-1),n-2,2)$. A convenient way to understand $H(2,n)$ is to view its vertices as the $n^2$ points of a $n\times n$ grid, where two points are adjacent if they are in the same row or in the same column.
\begin{theorem}\label{thm:hammingn2}
The spectrum of $L_1^\uparrow(H(2,n))$ is
\begin{equation*}
\begin{pmatrix}
0 & n\\
2n(n-1)&n(n-1)(n-2)
\end{pmatrix}.
\end{equation*}
\end{theorem}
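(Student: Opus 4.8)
The plan is to exploit the grid structure of $H(2,n)$ and the fact that its maximal cliques are exactly the $n$ rows and $n$ columns, each of size $n$. First I would observe that every edge of $H(2,n)$ lies in exactly one maximal clique (a row or a column), since two adjacent vertices sharing a row cannot also share a column, and their common neighbours all lie in that same row or column. More importantly, every triangle is contained in a unique maximal clique: three mutually adjacent vertices must be pairwise in the same row or column, and a short case check shows they must all lie in one row or one column (you cannot have three vertices with two in a row and the third linked to both via a column without forcing a coordinate coincidence). Hence each $2$-face of $\mathcal{K}(H(2,n))$ belongs to exactly one maximal clique, and two triangles from distinct maximal cliques share at most one vertex.

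From this, the argument mirrors the proof of Lemma~\ref{lem:gq}. I would partition the triangles of $H(2,n)$ into $2n$ groups indexed by the $n$ rows $R_1,\dots,R_n$ and the $n$ columns $C_1,\dots,C_n$. Since a triangle $T$ and a triangle $T'$ from different maximal cliques satisfy $|T\cup T'|\geq 4$ with $|T\cap T'|\leq 1$, Lemma~\ref{lem:epsLdown} gives $(L_1^\uparrow)_{T,T'}=0$ whenever $|T\cap T'|=1$ (the entry is $-\epsilon_{T,T'}$ only when $T\cup T'$ is a $3$-face, which requires $|T\cap T'|=2$). Therefore $L_1^\uparrow(H(2,n))$ is block-diagonal with $2n$ blocks, one per maximal clique. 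Each block is precisely the matrix $L_1^\uparrow$ restricted to the edges inside a single $K_n$, i.e. $L_1^\uparrow(K_n^{1})$ in the notation of Proposition~\ref{prop:kn-eigs} — more carefully, it is the $L_1^\uparrow$ of the clique complex $K_n^{n-1}$ built on the $n$ vertices of that row/column, which by Proposition~\ref{prop:kn-eigs} with $i=1$ has spectrum $\begin{pmatrix}0 & n\\ \binom{n-1}{1} & \binom{n-1}{2}\end{pmatrix} = \begin{pmatrix}0 & n\\ n-1 & \binom{n-1}{2}\end{pmatrix}$.

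Finally I would take the union of these $2n$ identical spectra. The multiplicity of the eigenvalue $n$ becomes $2n\binom{n-1}{2} = 2n\cdot\frac{(n-1)(n-2)}{2} = n(n-1)(n-2)$, and the multiplicity of $0$ becomes $2n(n-1)$, giving exactly the claimed spectrum
\[
\begin{pmatrix}
0 & n\\
2n(n-1) & n(n-1)(n-2)
\end{pmatrix}.
\]
As a consistency check, the total number of edges of $H(2,n)$ is $2n\binom{n}{2}=n^2(n-1)$, which matches $2n(n-1)+n(n-1)(n-2) = n(n-1)\big(2+(n-2)\big) = n^2(n-1)$, so the block sizes add up correctly.

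I expect the only genuine obstacle to be the combinatorial claim that every triangle of $H(2,n)$ lies in a unique maximal clique (equivalently, that there are no "mixed" triangles using both a row-edge and a column-edge at a vertex). This is a finite check on the grid: if $u,v$ share a row and $u,w$ share a column, then for $v,w$ to be adjacent they must share a row or column, and either option forces $v=w$ or a repeated coordinate, a contradiction — so no such triangle exists. Once this structural fact is in place, the eigenvalue computation is an immediate application of the block-diagonalization and Proposition~\ref{prop:kn-eigs}, exactly as in the generalized quadrangle case.
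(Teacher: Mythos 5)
Your proposal is correct and is essentially the paper's own proof: both arguments observe that every triangle of $H(2,n)$ lies entirely within a single row or column, so $L_1^\uparrow(H(2,n))$ is block-diagonal with $2n$ blocks each equal to $L_1^\uparrow(K_n)$, and then apply Proposition \ref{prop:kn-eigs} and take the union of the spectra. (Only a minor notational slip: the vanishing off-block entries of $L_1^\uparrow$ are indexed by pairs of \emph{edges} from different maximal cliques, not pairs of triangles, but your structural claim that every triangle lies in a unique row or column is exactly what makes those entries zero.)
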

\begin{proof}
By definition of $H(2,n)$, since the only triangles are on the same row or same column, $L_1^\uparrow(H(2,n))$ is a block diagonal matrix with each of the $2n$ blocks being $L_1^\uparrow(K_n)$. From Proposition \ref{prop:kn-eigs}, we know that
$L_1^\uparrow(K_n)$ has spectrum
$\begin{pmatrix}
0 & n\\
n-1 & \binom{n-1}{2}
\end{pmatrix}$. Combining these results, we obtain the desired result. 
\end{proof}

The argument above can be extended to the Hamming graph $H(q,n)$ for $q > 2$ by observing that the Laplacian $L_1^{\uparrow}(H(q,n))$ has $qn^{q-1}$ blocks equal to $L_1^{\uparrow}(K_n)$.
\begin{theorem}\label{thm:Hqn}
    The spectrum of $L_1^\uparrow(H(q,n))$ is
    \begin{equation*}
    \begin{pmatrix}
    0 & n\\
    qn^{q-1}(n-1) & qn^{q-1} \binom{n-1}{2}.
    \end{pmatrix}.
\end{equation*}
\end{theorem}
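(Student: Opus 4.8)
The plan is to reduce $L_1^\uparrow(H(q,n))$ to a block-diagonal form, just as in the proof of Theorem \ref{thm:hammingn2}, and then read off the spectrum from Proposition \ref{prop:kn-eigs}. First I would identify the triangles of $H(q,n)$: two vertices are adjacent exactly when their words differ in a single coordinate, so a triangle $\{x,y,z\}$ forces all pairwise differences to be in one coordinate, and in fact all three words must agree on every coordinate except one common position $i$, on which they take three distinct values in $Q$. Hence every triangle, and more generally every edge that lies on a triangle, is determined by a choice of a coordinate position $i\in[n]$ together with a fixed assignment of values to the remaining $n-1$ coordinates. There are $n\cdot q^{n-1}$ such choices, and the edges/triangles associated with one choice span a copy of $K_q$ (on the $q$ words obtained by varying the $i$-th coordinate over $Q$).

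Next I would argue that $L_1^\uparrow(H(q,n))$ is block-diagonal with respect to this partition of the edge set. Indeed, by Lemma \ref{lem:epsLdown} the $(e,e')$ entry of $L_1^\uparrow$ is nonzero only if $e\cup e'$ is a triangle; but if $e$ and $e'$ come from two different ``lines'' (different choices of $(i,\text{values})$), then $e\cup e'$ has more than three vertices or is not contained in any triangle, so the entry is $0$. Within a single line, the induced structure on its $\binom{q}{2}$ edges is exactly the clique complex of $K_q$, so the corresponding diagonal block equals $L_1^\uparrow(K_q)$ (the ordering of the $q$ vertices does not affect the spectrum, by the invariance noted before Lemma \ref{nonzero} and proved in Appendix \ref{a:inv}).

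Finally I would invoke Proposition \ref{prop:kn-eigs} with $i=1$ and $n$ replaced by $q$: the spectrum of $L_1^\uparrow(K_q)$ is $\begin{pmatrix} 0 & q\\ q-1 & \binom{q-1}{2}\end{pmatrix}$. Wait --- comparing with the claimed statement, the theorem as typeset has eigenvalue $n$ with multiplicity $qn^{q-1}(n-1)$, which is obtained by swapping the roles of $q$ and $n$; so in fact the convention here is that $H(q,n)$ has $q$ vertices per coordinate and $n$ coordinates, the cliques have size $q$, and there are $n q^{n-1}$ of them --- no, the displayed multiplicities contain $qn^{q-1}$, matching $q$ coordinates each of size $n$. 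So the lines are indexed by a choice of one of $q$ coordinates and an assignment to the other $q-1$ coordinates from a set of size $n$, giving $q\,n^{q-1}$ blocks, each a copy of $L_1^\uparrow(K_n)$ with spectrum $\begin{pmatrix} 0 & n\\ n-1 & \binom{n-1}{2}\end{pmatrix}$. Taking the union over all $q n^{q-1}$ blocks multiplies both multiplicities by $q n^{q-1}$, yielding eigenvalue $0$ with multiplicity $q n^{q-1}(n-1)$ and eigenvalue $n$ with multiplicity $q n^{q-1}\binom{n-1}{2}$, exactly as stated.

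The only genuinely delicate point is the combinatorial claim that distinct lines contribute no off-diagonal coupling, i.e.\ that no triangle uses edges from two different lines; this follows cleanly from the observation that a triangle's three vertices must pairwise differ in the same single coordinate, but it is the step I would write out most carefully. Everything else is the same Kronecker-product / block-diagonal bookkeeping already used for $n=2$, so I would keep the exposition brief and simply point to Theorem \ref{thm:hammingn2} for the template.
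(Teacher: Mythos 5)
Your proof is correct and follows essentially the same route as the paper: the paper's (one-sentence) argument simply extends Theorem \ref{thm:hammingn2} by noting that $L_1^\uparrow(H(q,n))$ is block-diagonal with $qn^{q-1}$ blocks equal to $L_1^\uparrow(K_n)$, one for each maximal clique, and then applies Proposition \ref{prop:kn-eigs}. Your mid-proof hesitation stems from the paper's own inconsistent convention (the definition in Section \ref{subsec:Hamming} makes $q$ the alphabet size, while Theorems \ref{thm:hammingn2} and \ref{thm:Hqn} treat the first argument as the number of coordinates and the second as the alphabet size), and you resolved it in the way the stated multiplicities require.
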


\subsection{The Triangular Graphs}\label{subsec:Tn}

For a natural number $n\geq 2$, the Triangular graph $T_n$ is the line graph of the complete graph $K_n$. Its vertices are the $2$-subsets of the set $\{1,\ldots,n\}$, where two $2$-subsets are adjacent if their intersection has size one. The Triangular graph $T_n$ is a strongly regular with parameters $(\binom{n}{2},2(n-2),n-2,4)$. In this section, we prove the following result. 
\begin{theorem}\label{thm:Tn}
For $n\geq 4$, the spectrum of $L_1^{\uparrow}(T_n)$ is
\begin{equation}
\begin{pmatrix}
0 & 2 & n-1 & n & n+2\\
\binom{n}{2}-1 & \binom{n-1}{2} & \frac{n(n-2)(n-4)}{3} & \binom{n-1}{2} & \binom{n-1}{3}
\end{pmatrix}.
\end{equation}
\end{theorem}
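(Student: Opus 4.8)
The plan is to analyze the structure of the triangle complex $\mathcal{T}(T_n)$ directly and decompose the operator $L_1^\uparrow(T_n)$ using the symmetry and combinatorial structure of $T_n$. First I would classify the triangles (2-dimensional faces) of $T_n$: if the vertices of $T_n$ are 2-subsets of $[n]$, then three pairwise-intersecting 2-subsets either all share a common element (a ``star'' triangle $\{ab, ac, bc\}$ ... wait, those share pairwise intersections but not a common element) — more carefully, a set of three 2-subsets that are pairwise adjacent is of one of two types: (a) all three contain a fixed point $i$, i.e.\ $\{ix, iy, iz\}$, or (b) they form the edge set of a triangle in $K_n$, i.e.\ $\{ab, bc, ca\}$. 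Type (a) triangles, as $i$ ranges over $[n]$ and $\{x,y,z\}$ over 3-subsets of $[n]\setminus\{i\}$, correspond to copies of $K_{n-1}$ sitting inside $T_n$ (the ``line graphs'' of stars); type (b) triangles correspond to $K_3$'s, one per 3-subset of $[n]$. Each edge $\{ab, ac\}$ of $T_n$ (two 2-subsets meeting in $a$) lies in exactly one type-(a) triangle (the one through $a$ on $\{b,c,\ldots\}$ — actually through point $a$, completing with any $\{ad\}$: that's $n-3$ type-(a) triangles through this edge, all inside the $K_{n-1}$ at $a$) and exactly one type-(b) triangle $\{ab, ac, bc\}$. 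So the degree of each edge is $(n-3)+1 = n-2$, which will be the diagonal entry of $L_1^\uparrow$.

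The key step is to exploit that $L_1^\uparrow = \delta_1^T\delta_1$ acts on $\re^{X_1}$, the edge space of $T_n$, and to find its eigenspaces using the $S_n$-action. I would set up the decomposition of $\re^{X_1}$ and $\re^{X_2}$ into irreducible $S_n$-representations and compute the spectrum block by block; the eigenvalues $0, 2, n-1, n, n+2$ and the stated multiplicities $\binom{n}{2}-1, \binom{n-1}{2}, \tfrac{n(n-2)(n-4)}{3}, \binom{n-1}{2}, \binom{n-1}{3}$ strongly suggest which irreducibles appear (e.g.\ $\binom{n-1}{3}$ is the dimension of the irreducible indexed by $(n-3,1,1,1)$ or similar, $\binom{n-1}{2}$ appears twice matching the two copies of the standard-squared-type module, $\tfrac{n(n-2)(n-4)}{3}$ matches the module indexed by $(n-3,3)$ or $(n-3,2,1)$). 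Concretely: the cochain space $\re^{X_0} = \re^{V(T_n)}$ decomposes as the permutation module on 2-subsets, $\delta_0$ has image of dimension $\binom{n}{2}-1$, giving the $0$-eigenspace of $L_1^\uparrow$ of dimension $\binom{n}{2}-1$ by the Proposition before Section \ref{sec:PG-GQ} (note $|V(T_n)| - 1 = \binom{n}{2}-1$, and I would argue $H^1 = 0$ here, perhaps using the result of Section \ref{subsec:Tch}, so the kernel is exactly $\im\delta_0$). For the nonzero eigenvalues I would use Lemma \ref{nonzero}: the nonzero spectrum of $L_1^\uparrow = \delta_1^T\delta_1$ equals that of $L_2^\downarrow$-related operator $\delta_1\delta_1^T$ acting on $\re^{X_2}$, which has smaller and more structured dimension $|X_2| = n\binom{n-1}{3} + \binom{n}{3}$. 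Alternatively — and this is probably cleaner — use the block structure: order edges so that each type-(a) $K_{n-1}$ contributes a block, handle the interaction with type-(b) triangles as a correction, and diagonalize using characters.

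The main obstacle I expect is that $L_1^\uparrow(T_n)$ is \emph{not} block-diagonal the way $L_1^\uparrow$ of Hamming graphs was — the type-(a) and type-(b) triangles overlap on edges, so the clean Kronecker-product trick from Theorem \ref{thm:hammingn2} and Lemma \ref{lem:uniquefaces} fails. I would instead lean on representation theory of $S_n$: write $\re^{X_1}$ and $\re^{X_2}$ as sums of Specht modules $S^\lambda$, observe that $\delta_1$ and $\delta_1^T$ are $S_n$-equivariant, so by Schur's lemma $L_1^\uparrow$ acts as a scalar $c_\lambda$ on each isotypic component, and then pin down each scalar $c_\lambda$ by evaluating $\langle L_1^\uparrow f, f\rangle / \langle f,f\rangle$ on one convenient explicit vector $f$ in each $S^\lambda$ (e.g.\ a polytabloid or a simple difference of basis edges). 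Matching the multiplicities $\dim S^\lambda$ against the claimed multiplicities then identifies exactly which $\lambda$ occur with which eigenvalue, with the $n \geq 4$ hypothesis ensuring the relevant partitions are valid (and for small $n$ some multiplicities vanish, e.g.\ $n=4$ kills the $\tfrac{n(n-2)(n-4)}{3}$ term). The bookkeeping in decomposing the two permutation modules on ordered edges / triangles of $T_n$ and computing the five scalars is the technical heart; I would organize it so that each eigenvalue is verified by a single local computation near a fixed small configuration of points in $[n]$.
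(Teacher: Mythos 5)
Your structural analysis of the triangles of $T_n$ and the degree count $n-2$ is correct, and the kernel part of your plan can be made to work ($T_n$ does satisfy the hypothesis of Theorem \ref{prop:cycle-decomp}: for four consecutive vertices $v_1v_2,v_2v_3,v_3v_4,v_4v_5$ of an induced cycle, the vertex $\{v_2,v_4\}$ of $T_n$ is a common neighbour), but the central mechanism you propose for the nonzero eigenvalues has a genuine gap. The edge space $\re^{X_1}$ of $T_n$ is the permutation module $M^{(n-3,2,1)}$ (an edge of $T_n$ is a point $a$ together with a $2$-subset of $[n]\setminus\{a\}$), and this module is \emph{not} multiplicity-free: by Young's rule it is $S^{(n)}\oplus 2S^{(n-1,1)}\oplus 2S^{(n-2,2)}\oplus S^{(n-2,1,1)}\oplus S^{(n-3,3)}\oplus S^{(n-3,2,1)}$. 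Consequently Schur's lemma does not make $L_1^{\uparrow}$ act as a scalar on the $S^{(n-1,1)}$- and $S^{(n-2,2)}$-isotypic components; there it acts through $2\times 2$ matrices that must be computed and diagonalized, and a Rayleigh quotient on a single vector in such a component need not be an eigenvalue. Moreover, your dimension-matching misidentifies the constituents: $S^{(n-3,1,1,1)}$ does not occur in $M^{(n-3,2,1)}$ at all (it is not dominance-above $(n-3,2,1)$), yet you assign it to the eigenvalue $n+2$; and $S^{(n-2,1,1)}$ occurs only once although both eigenvalues $2$ and $n$ have multiplicity $\binom{n-1}{2}$, so at least one of those eigenspaces, and likewise the $(n+2)$-eigenspace, must be a direct sum of two or more non-isomorphic irreducibles (note, e.g., $\binom{n-1}{2}=\dim S^{(n)}+\dim S^{(n-2,2)}$ and $\binom{n-1}{3}=\dim S^{(n-1,1)}+\dim S^{(n-3,3)}$, and several numerically consistent assignments exist). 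So matching claimed multiplicities against dimensions of Specht modules cannot determine the spectrum, and the actual determination of the five eigenvalues — which you defer as the ``technical heart'' — is exactly what is missing; nothing in the proposal certifies that no eigenvalues other than the five listed occur.

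For contrast, the paper proves the theorem without representation theory: it exhibits explicit eigenvectors for $2$ (via the signed graph of $L_2^{\downarrow}$ together with Lemma \ref{nonzero}), for $n+2$ (vectors $u_{a,b,c}$ supported on an embedded copy of $T_4$), and for $n$ (vectors $w_{a,b}$ supported on vertex-disjoint triangles), obtaining lower bounds $\binom{n-1}{2}$, $\binom{n-1}{3}$, $\binom{n-1}{2}$ on the multiplicities, and $\binom{n}{2}-1$ for $0$ from $\im\delta_0\subseteq\ker\delta_1$; it then closes the count with a trace argument, showing that $\mathrm{tr}(L)$ and $\mathrm{tr}(L^2)$ force the remaining eigenvalues, via the equality case of Cauchy--Schwarz, to all equal $n-1$, which simultaneously pins down every multiplicity. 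If you wish to pursue the equivariant route, you would need to compute the full isotypic blocks (including the $2\times2$ ones) of $L_1^{\uparrow}$, or at least supplement the lower-bound eigenvector constructions with a closing argument such as the paper's trace identities.
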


When there is no risk of confusion, we will use $L_1^{\uparrow}$ to denote $L_1^{\uparrow}(T_n)$, $G$ to denote $T_n$, and $E$ to denote $E(T_n)$ in the next subsections.

\subsubsection{Eigenvalue \texorpdfstring{$0$}{0} }

For $1\leq i<j\leq n$, define the vector $v_{i,j}\in \mathbb{R}^{E}$ as follows:
\begin{equation*}
v_{i,j}(e)=\begin{cases}
0, \text{if } \{i,j\}\notin e,\\
+1, \text{if } e=\{\{i,j\},\{r,s\}\} \text{ and } \{i,j\}<\{r,s\},\\
-1, \text{if } e=\{\{i,j\},\{r,s\}\} \text{ and } \{i,j\}>\{r,s\}.
\end{cases}
\end{equation*}

It is known \cite[Ch.4]{Biggs} that the vectors $v_{i,j}, 1\leq i<j\leq n$ sum up to the zero vector and span the cut space $\im \delta_0$ whose dimension is $\binom{n}{2}-1$. We have a self-contained proof showing that these vectors actually span $\ker \delta_1=\ker L_1^\uparrow$, but we omit this proof here for brevity and refer the reader for Subsection \ref{subsec:n_1} for an argument showing that the multiplicity of $0$ is $\binom{n}{2}-1$.

\subsubsection{Eigenvalue 2}
% \subsubsection{Eigenvalue \texorpdfstring{$2$}{2}}

In this subsection, we show that $2$ is an eigenvalue of $L_1^{\uparrow}$ with multiplicity at least $\binom{n-1}{2}$.

As mentioned in Lemma \ref{nonzero}, $L_2^{\downarrow}=\delta_1\delta_1^{T}$ and $L_1^{\uparrow}=\delta_1^{T}\delta_1$ have the same positive eigenvalues (including their multiplicities). Note that $L_2^{\downarrow}$ is a matrix with constant row sums, where every diagonal entry is equal to $3$ and off-diagonal entries are in $\{-1,0,1\}$.

For any graph $X$ with an ordering of the vertices $V(X)$, we will let $A$ and $G$ be as follows: we write
\[
 L_2^{\downarrow} = 3I + A,
 \]
 where $A$ is a signed adjacency matrix for an underlying graph $G$. We say that $G$ with the signing given in $A$, is the \textsl{signed graph of $L_2^{\downarrow}(X)$}. 
 
 The vertices of $G$ are the triangles of $X$. A vertex of $G$ inherits an ordering on its three elements from the vertex ordering of $G$. For a triple $t =\{a,b,c\} \in V(X)$, we say that $t$ is of  \textsl{positive $\{a,b\}$-type} if $a$ and $b$ occur consecutively in the ordering of $t$ and we write that $\sgn_{a,b}(t) = 1$. We say that $t$ is of  \textsl{negative $\{a,b\}$-type} if $a$ and $b$ do not occur consecutively in the ordering of $t$; that is the ordering is either $a<c<b$ or $b<c<a$. In this case, we write that $\sgn_{a,b}(t) = -1$. We have the following lemma about the entries of $L_2^{\downarrow}(X)$. 
 
 \begin{lemma} Let $X$ be a graph. For any triangles $t_1$ and $t_2$ of $X$,  
  \[
 L_2^{\downarrow}(t_1, t_2) = \begin{cases} 3, & \text{if } t_1 = t_2; \\
  \sgn_{u,v}(t_1)\sgn_{u,v}(t_2), & \text{if } t_1 \cap t_2 = \{u,v\};\\
  0, & \text{otherwise.} \end{cases}
 \] 
 %for $t_1,t_2$ triples of pairwise adjacent vertices of $X$. 
 \end{lemma}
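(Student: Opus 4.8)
The plan is to unwind the definitions in Lemma~\ref{lem:epsLdown} for the specific case $i=2$, since $L_2^{\downarrow}$ is exactly the down Laplacian in dimension $2$. That lemma already tells us that $(L_2^{\downarrow})_{F,F'}$ equals $3$ on the diagonal, equals $\epsilon_{F,F'}=[F:F\cap F'][F':F\cap F']$ when $|F\cup F'|=4$ (i.e.\ when $t_1,t_2$ are distinct triangles sharing an edge $\{u,v\}$), and is $0$ otherwise. So the only content is to verify that $[t_1:\{u,v\}]\,[t_2:\{u,v\}] = \sgn_{u,v}(t_1)\sgn_{u,v}(t_2)$, and in fact it suffices to show the stronger pointwise-up-to-sign identity: for a single triangle $t=\{a,b,c\}$ and an edge $\{u,v\}\subset t$, we have $[t:\{u,v\}] = \pm\sgn_{u,v}(t)$, where the ambiguous sign is a global constant not depending on which edge of $t$ we picked. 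Then the product of two such expressions for $t_1,t_2$ kills the ambiguity and yields the claim.

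First I would fix a triangle $t=\{x_0,x_1,x_2\}$ with $x_0<x_1<x_2$ in the global vertex order. By the definition in~\eqref{eq:signing}, $[t:t\setminus\{x_j\}] = (-1)^j$. There are three edges: removing $x_0$ leaves $\{x_1,x_2\}$ with sign $+1$; removing $x_1$ leaves $\{x_0,x_2\}$ with sign $-1$; removing $x_2$ leaves $\{x_0,x_1\}$ with sign $+1$. Now compare with $\sgn$: the edges $\{x_1,x_2\}$ and $\{x_0,x_1\}$ are the ``consecutive'' pairs (the middle element of $t$ is an endpoint), so they have positive type, $\sgn=+1$; the edge $\{x_0,x_2\}$ is the non-consecutive pair (the middle element $x_1$ lies strictly between), so it has negative type, $\sgn=-1$. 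Matching up: for edge $\{x_1,x_2\}$ we get $[t:\cdot]=+1=\sgn$; for $\{x_0,x_2\}$ we get $[t:\cdot]=-1=\sgn$; for $\{x_0,x_1\}$ we get $[t:\cdot]=+1=\sgn$. So in fact $[t:\{u,v\}]=\sgn_{u,v}(t)$ exactly, with no sign ambiguity at all, for every edge $\{u,v\}$ of $t$.

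Given this, the proof closes immediately: for $t_1\ne t_2$ with $t_1\cap t_2=\{u,v\}$ and $t_1\cup t_2\in X_3$, Lemma~\ref{lem:epsLdown} gives $(L_2^{\downarrow})_{t_1,t_2}=\epsilon_{t_1,t_2}=[t_1:\{u,v\}][t_2:\{u,v\}]=\sgn_{u,v}(t_1)\sgn_{u,v}(t_2)$; if $t_1\cap t_2$ has size $\le 1$ then $t_1\cup t_2\notin X_3$ and the entry is $0$; and the diagonal entry is $\deg$-free, namely $i+1=3$. I don't anticipate a genuine obstacle here — the statement is essentially a bookkeeping translation of the general formula for $(L_i^{\downarrow})_{F,F'}$ into the language of ``consecutive vs.\ non-consecutive'' endpoints. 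The only thing to be careful about is the case analysis on the relative order of $u,v$ and the third vertex $c$ inside $t$ (three cases: $u<v<c$, $u<c<v$, $c<u<v$, up to swapping $u\leftrightarrow v$), and checking that in each case the exponent $(-1)^j$ from~\eqref{eq:signing} agrees with $\sgn_{u,v}(t)$; this is exactly the computation carried out above with $(x_0,x_1,x_2)=(u,v,c),(u,c,v),(c,u,v)$ respectively.
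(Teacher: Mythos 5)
Your proof is correct and is essentially the paper's own argument: the paper's entire proof consists of the observation $\sgn_{a,b}(\{a,b,c\}) = [\{a,b,c\}:\{a,b\}]$ followed by an appeal to Lemma \ref{lem:epsLdown}, which is exactly the identity you verify case by case before invoking that lemma. One small caution: in your closing paragraph you insert the hypothesis $t_1\cup t_2\in X_3$ and justify the zero case by saying $t_1\cup t_2\notin X_3$, but for the down Laplacian the relevant condition in Lemma \ref{lem:epsLdown} is only $|t_1\cup t_2|=4$ (as you state correctly at the start), so the middle case covers every pair of distinct triangles sharing an edge even when their four vertices do not span a $3$-face, and the entry vanishes precisely when $|t_1\cap t_2|\le 1$ because then $|t_1\cup t_2|\ge 5$, not because the union fails to be a face.
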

 
 \proof We observe that $\sgn_{a,b}(\{a,b,c\}) = [\{a,b,c\}: \{a,b\}]$. The result now follows from Lemma \ref{lem:epsLdown}. \qed 
 
\begin{lemma} The matrix $L_1^{\uparrow} = \delta_1^T\delta_1$ of $T_n$ has eigenvalue $2$ with multiplicity at least $\binom{n-1}{2}$. \end{lemma}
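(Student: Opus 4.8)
The plan is to move the problem onto the triangle space of $T_n$ and then onto a Johnson‑scheme computation. By Lemma \ref{nonzero}, since $2\neq 0$, the multiplicity of $2$ as an eigenvalue of $L_1^\uparrow(T_n)=\delta_1^T\delta_1$ equals its multiplicity as an eigenvalue of $L_2^\downarrow(T_n)=\delta_1\delta_1^T=3I+A$, where $A$ is the signed triangle graph of $T_n$. So it suffices to show that $A$ has eigenvalue $-1$ (equivalently $L_2^\downarrow(T_n)$ has eigenvalue $2$) with multiplicity at least $\binom{n-1}{2}$.

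The triangles of $T_n=L(K_n)$ come in two types: \emph{claws}, given by a centre $a\in[n]$ and a $3$‑subset of leaves in $[n]\setminus a$, and \emph{$K_n$‑triangles}, the edge‑triples of the triangles $\{i,j,k\}$ of $K_n$. Two incidence facts drive the argument: every edge of $T_n$ lies in exactly one $K_n$‑triangle, and for each $a$ the claws with centre $a$ together with the edges with centre $a$ form a copy of the dimension‑$\le 2$ clique complex of $K_{n-1}$, while claws with distinct centres share no edge of $T_n$. Splitting $\delta_1=\left(\begin{smallmatrix}\delta''\\ \delta'\end{smallmatrix}\right)$ according to whether the row‑triangle is a claw ($\delta''$) or a $K_n$‑triangle ($\delta'$) gives $L_1^\uparrow(T_n)=\delta''^T\delta''+\delta'^T\delta'$. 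By the first fact the edge‑sets $E_T$ of the $K_n$‑triangles $T$ partition $E(T_n)$, so $\delta'^T\delta'$ is block diagonal over the $K_n$‑triangles with rank‑one blocks $v_Tv_T^T$, $v_T=([T:e])_{e\in E_T}$ a $\pm1$‑vector with $\|v_T\|^2=3$, whose scalings $v_T/\sqrt3$ are orthonormal (disjoint supports); hence $\delta'^T\delta'=3\,\Pi_Q$ for the orthogonal projection $\Pi_Q$ onto $\operatorname{span}\{v_T\}$. By the second fact $\delta''^T\delta''$ is block diagonal over $[n]$, the $a$‑th block being $L_1^\uparrow$ of a copy of the dimension‑$\le 2$ clique complex of $K_{n-1}$ (the vertex order being immaterial to the spectrum), which by Proposition \ref{prop:kn-eigs} has only the eigenvalues $0$ and $n-1$; hence $\delta''^T\delta''=(n-1)\,\Pi_P$ for an orthogonal projection $\Pi_P$.

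Thus $L_1^\uparrow(T_n)=(n-1)\Pi_P+3\Pi_Q$ is a combination of two scaled orthogonal projections. For such an operator $x\Pi_P+y\Pi_Q$, every eigenvalue not in $\{0,x,y,x+y\}$ comes from a two‑dimensional block attached to a principal angle $\theta$ between $\operatorname{im}\Pi_P$ and $\operatorname{im}\Pi_Q$, with eigenvalues $\tfrac12\bigl((x+y)\pm\sqrt{(x-y)^2+4xy\cos^2\theta}\bigr)$. With $x=n-1$, $y=3$ this equals $2$ exactly when $\cos^2\theta=\tfrac{n-3}{3(n-1)}$, and since $2\notin\{0,n-1,3,n+2\}$ for $n\ge 4$, the multiplicity of $2$ in $L_1^\uparrow(T_n)$ equals the multiplicity of $\tfrac{n-3}{3(n-1)}$ as an eigenvalue of $\Pi_Q\Pi_P\Pi_Q$ restricted to $\operatorname{span}\{v_T\}$. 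In the orthonormal basis $\{v_T/\sqrt3\}$ this restricted operator is $\tfrac1{3(n-1)}B^TB$, where $B=\delta''(\delta')^T$ is the signed claw/$K_n$‑triangle incidence matrix, $B_{c,T}=\sum_e[c:e][T:e]=\epsilon_{c,T}$ when $c$ and $T$ share an edge of $T_n$ and $B_{c,T}=0$ otherwise. Hence the lemma is equivalent to $\operatorname{mult}_{n-3}(B^TB)\ge\binom{n-1}{2}$.

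Finally I would evaluate $B^TB$ and exhibit the eigenspace. Using the entrywise description of $L_2^\downarrow$ from Lemma \ref{lem:epsLdown} and the $\sgn_{a,b}$‑calculus recorded just above, $B^TB$ is the matrix on $\mathbb R^{\binom{[n]}{3}}$ with diagonal entries $3(n-3)$ and off‑diagonal entry $\sum_{a\in T\cap T'}\epsilon_{c_a,T}\,\epsilon_{c_a,T'}$ when $|T\cap T'|=2$ (and $0$ when $|T\cap T'|\le 1$), where $c_a$ is the claw with centre $a$ and leaf set $(T\cup T')\setminus\{a\}$; that is, $B^TB=3(n-3)I+R$ where $R$ is a $\{-2,0,2\}$‑valued signed adjacency matrix of the Johnson graph $J(n,3)$, so the claim becomes $\operatorname{mult}_{-2(n-3)}(R)\ge\binom{n-1}{2}$. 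Each sign $\epsilon_{c_a,T}=[c_a:c_a\cap T][T:c_a\cap T]$ collapses via the $\sgn$‑formula to a betweenness condition among the labels of $[n]$, and the conclusion would follow by writing down a $\binom{n-1}{2}$‑dimensional family of vectors in $\mathbb R^{\binom{[n]}{3}}$ — most naturally parametrized by the $2$‑subsets of $[n-1]$ (equivalently by the $3$‑subsets through a fixed point subject to explicit relations) — and verifying $Rg=-2(n-3)g$ coordinate by coordinate, together with their linear independence. (Equivalently, avoiding the projection reduction, one could directly write down the corresponding cochains in $\mathbb R^{X_2}$ and check $L_2^\downarrow(T_n)g=2g$ via the same sign bookkeeping.) I expect this last step to be the real obstacle: the reductions above are essentially routine, but the signs $\epsilon_{c_a,T}$ neither all align nor all cancel — $R$ is not a constant multiple of the unsigned adjacency matrix of $J(n,3)$ — so the interference of the $\pm1$'s has to be controlled honestly.
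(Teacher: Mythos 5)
Your reductions are, as far as I can check, sound: the edge sets of the $K_n$-triangles do partition $E(T_n)$, so $\delta'^T\delta'=3\Pi_Q$; the claws split by centre, each centre-$a$ block of $\delta''^T\delta''$ is an up-Laplacian of the $2$-skeleton of a simplex on $n-1$ vertices, so by Proposition \ref{prop:kn-eigs} (and the reordering invariance) $\delta''^T\delta''=(n-1)\Pi_P$; the two-projection/principal-angle computation giving $\cos^2\theta=\tfrac{n-3}{3(n-1)}$ is correct, as is the identification of the restricted operator with $\tfrac{1}{3(n-1)}B^TB$ and the evaluation $B^TB=3(n-3)I+R$. But these steps only relocate the problem; none of them produces the multiplicity bound, which is the entire content of the lemma. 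Your proposal stops exactly where that bound has to be delivered: you say you \emph{would} write down a $\binom{n-1}{2}$-dimensional family of vectors with $Rg=-2(n-3)g$ and verify it coordinate by coordinate, and you yourself flag the sign interference in $R$ as ``the real obstacle.'' No such vectors are exhibited, no eigenvalue equation is checked, and no linear-independence argument is given, so nothing about the eigenvalue $2$ of $L_1^{\uparrow}(T_n)$ has actually been proved.

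That deferred step is essentially the whole of the paper's proof. The paper stays one reformulation earlier (via Lemma \ref{nonzero}, with $L_2^{\downarrow}=3I+A$ on the triangle space) and, for each pair $u<v$ in $[n-1]$, writes down an explicit vector $\Zv^{uv}$ supported on the $n-2$ triangles $\{uw,uv,vw\}$ (entries $-\sgn(t)(n-3)$) and the $(n-2)(n-3)$ claws $\{wu,wv,wx\}$ (entries $\sgn(t)$), verifies $A\Zv^{uv}=-\Zv^{uv}$ entrywise using the $\sgn_{u,v}$ bookkeeping, and gets independence because $\Zv^{uv}$ is the only one of these vectors that is nonzero at the triangle on $\{u,v,n\}$. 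To salvage your route you would have to carry out the exactly analogous construction for $R$ (or $B^TB$): the candidate family is indeed indexed by $2$-subsets of $[n-1]$ and supported on the $K_n$-triangles through $uv$, but the $\pm$ pattern has to be matched to the $\epsilon$-signs and verified, which is no easier than the paper's direct computation. As written, the proposal is a correct chain of reformulations plus an announced plan, not a proof of the statement.
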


\proof We note that the $2$-eigenspace of $ L_2^{\downarrow}$ is equal to the $-1$-eigenspace of $A$. We will now construct $\binom{n-1}{2}$ linearly independent eigenvectors of $A$ with eigenvalue $-1$ by defining a vector $\Zv^{uv}$ for every unordered pair of vertices in $[n-1]$ and showing they are linearly independent. 
 
We consider the vertices of $K_n$ to be $[n] = \{1,\ldots,n\}$.  The vertices of $G$ are triples of edges of $K_n$ which induce either a triangle or a claw graph (also known as $K_{1,3}$ the complete bipartite graph with parts of size $1$ and $3$). Fix an edge $\{u,v\}$ of $K_n$ (a vertex of $T(n)$); we will write $uv$ for convenience when the meaning is clear. We will now construct $\Zv^{uv}$ for $G$, using the triples containing the edge $uv$. To simplify our notation, we will use $
\sgn(t) := \sgn_{u,v}(t)$ of the proof.

For two distinct vertices $u,v$ in $[n-1]$, we consider the following $n-2$ triangles:
\[
t_w = \{ uw, uv, vw \}, w\in [n] \setminus \{u,v\}.
\]
We consider also the following $(n-2)(n-3)$ claws of $K_n$
\[
c_{x}^w = \{ wu, wv,wx\}
\] 
for each $w \in [n] \setminus \{u,v\}$ and $x\in [n] \setminus \{u,v,x\}$. We note that these triples are the exactly vertices of $G$ which contain both $uw$ and $vw$. We observe that the subgraph of $G$ induced by $\{t_w\}$ and $\{ c_x^w\}$ is a disjoint union of $(n-2)$ copies of $K_{n-2}$; for each $w$, the vertices $\{t_w\} \cup \{c_x^w\}_{x}$ form a clique and have no other neighbours in the set specified. 

We define the vector $\Zv^{uv}$ as follows:
\[
\Zv^{uv}(t) = \begin{cases} -\sgn(t)(n-3), &\text{if } t = t_w \text{ for some }w; \\
\sgn(t), &\text{if } t = c_x^w \text{ for some }w,x; \\ 
0  &\text{otherwise}.
\end{cases} 
\]
Consider $t_w$ for $w \in [n] \setminus \{u,v\}$. We will write $\sgn(t) = \sgn_{uw,vw}(t)$. We have that 
\[
\begin{split} (A\Zv^{uv})(t_w) &= \sum_{\tau} A(t_w, \tau) \Zv^{uv}(\tau)  \\
&=\sum_{x  \in [n] \setminus \{u,v,w\}} A(t_w, c_x^w) \Zv^{uv}(c_x^w) \\
&= \sum_{x  \in [n] \setminus \{u,v,w\}} \sgn(t_w) \sgn(c_x^w) \sgn(c_x^w) \\
&=  \sgn(t_w)(n-3) \\
&= - \Zv^{uv}(t_w)\end{split} \] 
Similarly, for
 $c_w^x$  for $w \in [n] \setminus \{u,v\}$ and $x\in [n] \setminus \{u,v,x\}$. We have 
 \[
 \begin{split} (A\Zv^{uv})(c_x^w) &= \sum_{\tau} A(c_x^w, \tau) \Zv^{uv}(\tau)\\
 &= -(n-3) \sgn(t_w)^2 \sgn(c_x^w) + \sum_{y  \in [n] \setminus \{u,v,w,x\}} A(c_x^w, c_y^w) \Zv^{uv}(c_y^w) \\ 
 &= -(n-3) \sgn(c_x^w) + \sum_{y  \in [n] \setminus \{u,v,w,x\}} \sgn(c_x^w)\sgn(c_y^w) \sgn(c_y^w) \\
 &= -\sgn(c_x^w). \end{split}\]
Thus, we have shown that $A\Zv^{uv} = - \Zv^{uv}(t) $.

To see that $\{\Zv^{ab} \}_{a,b \in [n-1]}$ are linearly independent vectors, we observe that $\Zv^{uv}$ is the only vector with a non-zero entry corresponding to the edge set of triangle $u,v,n$ of $K_n$ and so cannot be a linear combination of the other vectors. 
 \qed 

\subsubsection{\texorpdfstring{Eigenvalue $n+2$}{n+2}}

In this subsection, we prove that $n+2$ is an eigenvalue of $L_1^{\uparrow}$ with multiplicity at least $\binom{n-1}{3}$.

We will need the following result.
\begin{prop}\label{prop:eig_6_T4}
The number $6$ is an eigenvalue of the matrix $L_1^{\uparrow}(T_4)$
\end{prop}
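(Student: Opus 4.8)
The plan is to compute the spectrum of $L_1^\uparrow(T_4)$ directly, since $T_4$ is a small, explicit graph. Recall that $T_4$ is the line graph of $K_4$, so it has $\binom{4}{2}=6$ vertices (the edges of $K_4$) and it is a $(6,4,2,4)$-strongly regular graph; in fact $T_4 \cong K_{2,2,2}$, the octahedron. First I would count the triangles: the triangles of $T_4$ come in two types, those corresponding to triangles of $K_4$ (there are $\binom{4}{3}=4$ of these) and those corresponding to claws $K_{1,3}$ in $K_4$ (there are $4$ of these, one per vertex of $K_4$), giving $|X_2| = 8$ two-dimensional faces, so $L_1^\uparrow(T_4)$ is a $12 \times 12$ matrix (since $|E(T_4)| = \binom{6}{2}-6 = 9$... wait, $T_4$ has $6\cdot 4/2 = 12$ edges). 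So $\delta_1$ is $8 \times 12$ and $L_1^\uparrow = \delta_1^T\delta_1$ is $12\times 12$.

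The cleanest route is via Lemma~\ref{nonzero}: the nonzero eigenvalues of $L_1^\uparrow(T_4) = \delta_1^T\delta_1$ coincide (with multiplicities) with the nonzero eigenvalues of $L_2^\downarrow(T_4) = \delta_1\delta_1^T$, which is only $8\times 8$. By Lemma~\ref{lem:epsLdown}, $L_2^\downarrow = 3I + A$ where $A$ is the signed adjacency matrix on the $8$ triangles, with $A(t_1,t_2) = \epsilon_{t_1,t_2}$ when $|t_1\cap t_2|=2$ and $0$ otherwise. Two triangles of $T_4$ share two edges precisely when, as $3$-edge-subsets of $K_4$, they share two edges; one checks that each "$K_4$-triangle" triple shares two edges with exactly the three "claw" triples centered at its three vertices, and vice versa, so the underlying graph of $A$ is bipartite between the $4$ triangle-type and $4$ claw-type faces, and in fact is $3$-regular, hence $K_{3,3}$ minus a perfect matching, or possibly $K_{4,4}$ minus a perfect matching — I would pin this down by direct enumeration. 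Then I would record the signs $\epsilon_{t_1,t_2}$ using the consecutiveness rule from the $\sgn_{u,v}$ discussion, and diagonalize the resulting small signed matrix $A$; $6$ is an eigenvalue of $L_2^\downarrow$ iff $3$ is an eigenvalue of $A$.

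Alternatively — and this is probably the shortest argument to write — I would invoke the already-established partial spectrum. By the previous subsections we already know $L_1^\uparrow(T_4)$ has eigenvalue $0$ with multiplicity $\binom{4}{2}-1 = 5$, eigenvalue $2$ with multiplicity $\binom{3}{2}=3$, and (for $n=4$) eigenvalue $n+2 = 6$ with multiplicity $\binom{3}{3}=1$ is exactly what we want to show; also eigenvalue $n-1 = 3$ has claimed multiplicity $\frac{n(n-2)(n-4)}{3} = 0$ and eigenvalue $n = 4$ has multiplicity $\binom{3}{2}=3$. Since $L_1^\uparrow(T_4)$ is $12\times 12$ and we have $5+3+3 = 11$ eigenvalues accounted for by the $0$-, $2$-, and $4$-eigenspaces (the $4$-eigenspace construction in the following subsection, or a direct check), the single remaining eigenvalue is determined by the trace: $\operatorname{tr} L_1^\uparrow(T_4) = \sum_{F\in X_1}\deg(F)$, and each edge of $T_4$ lies in a known number of triangles. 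Computing $\operatorname{tr} L_1^\uparrow(T_4) = 3|X_2| = 24$ (since $\sum_F \deg(F) = 3|X_2|$ as every $2$-face has three edges), we get the last eigenvalue as $24 - (0\cdot 5 + 2\cdot 3 + 4\cdot 3) = 24 - 18 = 6$. Hence $6$ is an eigenvalue of $L_1^\uparrow(T_4)$, as claimed.

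The main obstacle is bookkeeping: correctly enumerating the $8$ triangles of $T_4$ and their pairwise intersections with the right signs, and making sure the eigenvalue-multiplicity inputs I borrow from the earlier subsections (especially the $0$-, $2$-, and $4$-eigenspaces) are genuinely established there rather than circular — in particular the $n+2$ eigenvalue argument should not itself rely on Proposition~\ref{prop:eig_6_T4}. If there is any such circularity, I would fall back to the self-contained $8\times 8$ computation of $\operatorname{spec}(3I+A)$ described above, which requires no external input beyond Lemmas~\ref{nonzero} and~\ref{lem:epsLdown}.
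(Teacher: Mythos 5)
Your proposal is correct in substance but proceeds quite differently from the paper. The paper's proof is a bare-hands verification: it writes out $\delta_1^T$ and the full $12\times 12$ matrix $L_1^{\uparrow}(T_4)$ and exhibits an explicit $\pm 1$ eigenvector satisfying $L_1^{\uparrow}v=6v$ (equation \eqref{eq:eigT4}). This choice is not accidental: the very next proposition lifts that explicit eigenvector into $T_n$ via the subgraph $G_{a,b,c}$ to get the eigenvalue $n+2$, so the paper needs the vector itself, not merely the existence of the eigenvalue $6$. Your second route (trace plus the previously constructed $0$-, $2$- and $n$-eigenspaces) proves the stated proposition more conceptually and with less bookkeeping, and your worry about circularity is unfounded: the eigenvalue-$n$ construction of the vectors $w_{a,b}$ in Proposition \ref{prop:eig_n} nowhere uses Proposition \ref{prop:eig_6_T4}, so it may legitimately be invoked (though the paper's ordering would have to be rearranged or this independence noted). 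Your first route ($L_2^{\downarrow}=3I+A$ on the $8$ triangles) is also viable but is only sketched; as written it is not a proof, and the underlying graph of $A$ is $K_{4,4}$ minus a perfect matching (each $K_4$-triangle is non-adjacent exactly to the claw at its opposite vertex), not possibly $K_{3,3}$ minus a matching.

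There is one genuine, though easily repaired, gap in the trace argument: the multiplicities $5$, $3$, $3$ for the eigenvalues $0$, $2$, $4$ are only lower bounds, so a priori the twelfth dimension could be absorbed into one of those three eigenspaces, in which case there is no ``remaining eigenvalue'' and the computation $24-18=6$ proves nothing. You must rule this out, and the trace does it: if the exact multiplicities were $(6,3,3)$, $(5,4,3)$ or $(5,3,4)$, then $\operatorname{tr}L_1^{\uparrow}(T_4)$ would be $18$, $20$ or $22$, contradicting $\operatorname{tr}L_1^{\uparrow}(T_4)=3|X_2|=24$. Hence the multiplicities are exactly $(5,3,3)$, there is exactly one further eigenvalue of multiplicity one, and it equals $24-18=6$. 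With that one-line patch your argument is complete as a proof of the statement, but if you intend it to replace the paper's proof you should also supply the explicit eigenvector (or an equivalent substitute), since the subsequent proposition for general $n$ consumes it.
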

%This shows that $6$ i which is equivalent with $4$ being an eigenvalue of the adjacency matrix $A_1$ of the triangle complex of $T_4$.
\begin{proof}
For the triangular graph $T_4$ in Figure \ref{fig:T4}
\begin{figure}[htbp]
\centering
\scalebox{0.8}{
\begin{tikzpicture}[scale=0.5]
% vertices K4
\draw[fill=black] (-5,0) circle (3pt);
\draw[fill=black] (-8,0) circle (3pt);
\draw[fill=black] (-5,3) circle (3pt);
\draw[fill=black] (-8,3) circle (3pt);
% edgesK4
\draw (-5,0) -- (-5,3) -- (-8,3)--(-8,0) --(-5,0)--(-8,3);
\draw (-5,3) -- (-8,0);
% labels K4
\draw (-8,3) node[above]{$u_1$};
\draw (-5,3) node[above]{$u_2$};
\draw (-5,0) node[below]{$u_3$};
\draw (-8,0) node[below]{$u_4$};

%table 
\node at (-3,0) {$v_6:u_3u_4$};
\node at (-3,1){$v_5:u_2u_4$};
\node at (-3,2){$v_4:u_2u_3$};
\node at (-3,3){$v_3:u_1u_4$};
\node at (-3,4){$v_2:u_1u_3$};
\node at (-3,5){$v_1:u_1u_2$};

%% vertices T4
\draw[fill=black] (0,0) circle (3pt);
\draw[fill=black] (3,0) circle (3pt);
\draw[fill=black] (6,0) circle (3pt);
\draw[fill=black] (1.5,2.6) circle (3pt);
\draw[fill=black] (3,5.2) circle (3pt);
\draw[fill=black] (4.5,2.6) circle (3pt);
%% vertex labels
\node at (0,-0.5) {$v_1$};
\node at (3,-0.5) {$v_4$};
\node at (6,-0.5) {$v_5$};
\node at (1,2.6) {$v_2$};
\node at (5,2.6) {$v_6$};
\node at (3,5.7) {$v_3$};
%%% edges T4
\draw[thick] (0,0)--(3,0)--(6,0)--(4.5,2.6)--(3,5.2)--(1.5,2.6)--(0,0);
\draw[thick] (1.5,2.6)--(3,0)--(4.5,2.6)--(1.5,2.6);
\draw[thick] (3,5.2) arc (90:210:3.46);
\draw[thick] (6,0) arc (-30:90:3.46);
\draw[thick] (0,0) arc (210:330:3.46);
\end{tikzpicture}}
\caption{The triangular graph $T_4$ as line graph of $K_4$}
\label{fig:T4}
\end{figure}
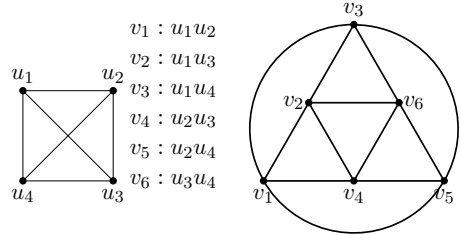
the boundary map $\partial_2=\delta_1^{T}$ is the $|E(T_4)|\times |T(T_4)|$ matrix in equation \eqref{eq:partial2T4} (where the rows $ij$ refer to the edges $v_iv_j$ of $T_4$ and the columns $ijk$ refer to the triangles $v_iv_jv_k$ in $T_4$), and $T(T_4)$ is the set of triangles in $T_4$):
% \[
% \partial_2=
% \begin{pmatrix}
%  1&1&0&0&0&0&0&0\\ 
%  -1&0&1&0&0&0&0&0 \\ 
%  0&0&-1&1&0&0&0&0 \\ 
%  0&-1&0&-1 &0&0&0&0\\
%  1&0&0&0&1&0&0&0\\
%  0&0&0&0&-1&1&0&0\\
%  0&1&0&0&0&-1&0&0\\
%  0&0&0&0&1&0&1&0\\
%  0&0&1&0&0&0&-1&0\\
%  0&0&0&0&0&0&1&1\\
%  0&0&0&0&0&1&0&-1\\
%  0&0&0&1&0&0&0&1
% \end{pmatrix}.
% \]
{\small 
\begin{equation}\label{eq:partial2T4}
\delta_1^{T}=\partial_2=
\begin{blockarray}{ccccccccc}
&123 & 124 & 135 & 145 & 236 & 246 & 356 & 456 \\
\begin{block}{c[cccccccc]}
12&+1&+1&0&0&0&0&0&0\\
13&-1&0&+1&0&0&0&0&0 \\ 
14&0&-1&0&+1&0&0&0&0 \\ 
15&0&0&-1&-1 &0&0&0&0\\
23&+1&0&0&0&+1&0&0&0\\
24&0&+1&0&0&0&+1&0&0\\
26&0&0&0&0&-1&-1&0&0\\
35&0&0&+1&0&0&0&+1&0\\
36&0&0&0&0&+1&0&-1&0\\
45&0&0&0&+1&0&0&0&+1\\
46&0&0&0&0&0&+1&0&-1\\
56&0&0&0&0&0&0&+1&+1\\
\end{block}
\end{blockarray}\ .
\end{equation}}
%Note that the row $12$ in the matrix above corresponds to the edge $v_1v_2$ in $T_4$ which is the same as $\{u_1u_2,u_1u_3\}$.

The Laplacian $L_1^{\uparrow}(T_4)$ equals $\delta_1^T\delta_1$ and we write it below:
{\small 
\begin{equation}\label{eq:L1upT4}
\begin{blockarray}{ccccccccccccc}
&12 & 13 & 14 & 15 & 23 & 24 & 26 & 35 & 36 & 45 & 46 & 56 \\
\begin{block}{c[cccccccccccc]}
12 & 2 & -1 & -1 & 0 & +1 & +1& 0 & 0 & 0 & 0 & 0 & 0\\
13 & -1& 2  & 0  &-1 & -1  & 0 & 0  & +1& 0 & 0 & 0 & 0\\
14 & -1& 0  & 2  &-1 &  0  & -1& 0  & 0. & 0 & +1 & 0 & 0\\
15 & 0 & -1 & -1 & 2 & 0 & 0 & 0 & -1 & 0 & -1 & 0 & 0\\
23 & +1 & -1& 0 & 0 & 2 & 0 & -1 & 0 & +1 & 0 & 0 & 0\\
24 & +1& 0 & -1 & 0 & 0 & 2 & -1 & 0 & 0 & 0 & +1 & 0\\
26 & 0  & 0 & 0  & 0 & -1& -1& 2 & 0 & -1 & 0 & -1 & 0\\
35 & 0 & +1& 0 & -1& 0& 0  & 0 & 2  & -1 & 0 & 0 & +1\\
36 & 0 & 0  & 0 & 0 & +1& 0 & -1 & -1 & 2 & 0 & 0 & -1\\
45 & 0 & 0 & +1 & -1 & 0& 0 & 0 & 0 & 0 & 2 & -1 & +1\\
46 & 0 & 0 & 0 & 0 & 0& +1 & -1 & 0 & 0 & -1 & 2 & -1\\
56 & 0 & 0 & 0 & 0 & 0& 0 & 0 & +1 & -1 & +1 & -1 & 2\\
\end{block}
\end{blockarray}\ .
\end{equation} 
}

One can check directly by matrix multiplication that the following identity holds:
{\small 
\begin{equation}\label{eq:eigT4}
\begin{blockarray}{ccccccccccccc}
&12 & 13 & 14 & 15 & 23 & 24 & 26 & 35 & 36 & 45 & 46 & 56  \\
\begin{block}{c[cccccccccccc]}
12 & 2 & -1 & -1 & 0 & +1 & +1& 0 & 0 & 0 & 0 & 0 & 0\\
13 & -1& 2  & 0  &-1 & -1  & 0 & 0  & +1& 0 & 0 & 0 & 0\\
14 & -1& 0  & 2  &-1 &  0  & -1& 0  & 0. & 0 & +1 & 0 & 0\\
15 & 0 & -1 & -1 & 2 & 0 & 0 & 0 & -1 & 0 & -1 & 0 & 0\\
23 & +1 & -1& 0 & 0 & 2 & 0 & -1 & 0 & +1 & 0 & 0 & 0\\
24 & +1& 0 & -1 & 0 & 0 & 2 & -1 & 0 & 0 & 0 & +1 & 0\\
26 & 0  & 0 & 0  & 0 & -1& -1& 2 & 0 & -1 & 0 & -1 & 0\\
35 & 0 & +1& 0 & -1& 0& 0  & 0 & 2  & -1 & 0 & 0 & +1\\
36 & 0 & 0  & 0 & 0 & +1& 0 & -1 & -1 & 2 & 0 & 0 & -1\\
45 & 0 & 0 & +1 & -1 & 0& 0 & 0 & 0 & 0 & 2 & -1 & +1\\
46 & 0 & 0 & 0 & 0 & 0& +1 & -1 & 0 & 0 & -1 & 2 & -1\\
56 & 0 & 0 & 0 & 0 & 0& 0 & 0 & +1 & -1 & +1 & -1 & 2\\
\end{block}
\end{blockarray}
\begin{bmatrix}
+1\\
-1\\
-1\\
+1\\
+1\\
+1\\
-1\\
-1\\
+1\\
-1\\
+1\\
-1
\end{bmatrix}=6\cdot \begin{bmatrix}
+1\\
-1\\
-1\\
+1\\
+1\\
+1\\
-1\\
-1\\
+1\\
-1\\
+1\\
-1
\end{bmatrix}
\end{equation}}
This proves that $6$ is an eigenvalue of $L_1^{\uparrow}$.
\end{proof}
We now state and prove the main result of this subsection.

\begin{prop}
For $n\geq 4$, $n+2$ is an eigenvalue of $L_1^{\uparrow}(T_n)$
with multiplicity at least $\binom{n-1}{3}$.\end{prop}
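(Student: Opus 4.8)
The plan is to reduce the statement about $T_n$ to the already-established fact (Proposition \ref{prop:eig_6_T4}) that $6$ is an eigenvalue of $L_1^{\uparrow}(T_4)$, by a "lifting" argument over $4$-subsets of $[n]$. The key observation is that for each $4$-subset $W \subseteq [n]$, the vertices of $T_n$ of the form $\{a,b\}$ with $\{a,b\}\subseteq W$ induce a copy of $T_4$ inside $T_n$, and — crucially — every triangle of $T_n$ whose three underlying $2$-subsets all lie inside $W$ is a triangle of this copy of $T_4$, and no other triangle of $T_n$ meets more than one edge of this copy in a way that contributes to $L_1^\uparrow$. Since $L_1^\uparrow$ acts on the edge space $\mathbb{R}^{E(T_n)}$, I would embed the $6$-eigenvector $\Zw_0$ of $L_1^\uparrow(T_4)$ from \eqref{eq:eigT4} into $\mathbb{R}^{E(T_n)}$ by extending it by zero outside the edges of the $T_4$-copy on $W$, calling the result $\Zw_W$.

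First I would verify that $L_1^\uparrow(T_n)\,\Zw_W = 6\,\Zw_W$... but this is \emph{not} quite right: in $T_n$ (for $n>4$) an edge $\{\{a,b\},\{c,d\}\}$ of the $T_4$-copy lies in more triangles than it does inside $T_4$, so the diagonal degree entries grow. The correct value will be $n+2$ rather than $6$ (note $6 = 4+2$), and the reason the off-diagonal structure still works out is that any extra triangle through an edge of the $T_4$-copy uses a vertex $\{x,y\}$ with $\{x,y\}\not\subseteq W$, hence a second edge of that triangle lies outside the $T_4$-copy and is assigned value $0$ by $\Zw_W$; so those extra triangles contribute only to the diagonal. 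Thus the second step is a bookkeeping lemma: for an edge $e = \{\{a,b\},\{c,d\}\}$ with all of $a,b,c,d \in W$, the number of triangles of $T_n$ containing $e$ is exactly (the number inside $T_4$) plus a correction of $n-4$, uniformly — one checks this by the two cases ($\{a,b\}\cap\{c,d\}$ a single point, giving a "triangle-type" edge, versus disjoint, giving a "claw-type" edge) using that $T_n$ is strongly regular with $\lambda = n-2$. Combining, $L_1^\uparrow(T_n)$ restricted to the edges of the $T_4$-copy equals $L_1^\uparrow(T_4) + (n-4)I$ on those coordinates, and is zero on the rest of $\Zw_W$, so $L_1^\uparrow(T_n)\Zw_W = (6 + (n-4))\Zw_W = (n+2)\Zw_W$.

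The final step is a linear independence / dimension count: I would show $\{\Zw_W : W \in \binom{[n]}{4}\}$ spans a space of dimension at least $\binom{n-1}{3}$. The vectors $\Zw_W$ themselves are $\binom{n}{4}$ in number and need not all be independent, so rather than proving independence directly I would exhibit a subset of size $\binom{n-1}{3}$ that is independent — e.g. those $W$ containing the element $n$ — by a "leading term" argument analogous to the one used for eigenvalue $2$: for $W = \{i,j,k,n\}$, identify a coordinate (an edge of $T_n$) that appears with nonzero entry in $\Zw_W$ but in no $\Zw_{W'}$ for other $W' \ni n$, e.g. the edge between $\{i,n\}$-type and $\{j,n\}$-type vertices that is internal to $W$; a triangular/upper-triangular structure with respect to a suitable ordering of the $\binom{n-1}{3}$ triples $\{i,j,k\}\subseteq[n-1]$ then forces independence.

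The main obstacle I anticipate is the bookkeeping lemma in the second step: one must be careful that the "extra" triangles through an edge of the $T_4$-copy genuinely contribute nothing off-diagonal (i.e. never connect two edges \emph{both} internal to $W$), and that the count of extra triangles is the same constant $n-4$ for every internal edge regardless of whether it is of triangle-type or claw-type in $T_4$; verifying this uniformly — and checking it is consistent with the specific signs appearing in \eqref{eq:eigT4} so that the eigen-equation is preserved coordinate by coordinate, not merely up to the diagonal shift — is where the real work lies. The signs are handled automatically if one phrases everything via Lemma \ref{lem:epsLdown} and notes that the ordering on $T_4$ inherited from $[n]$ (restricted to $W$) is order-isomorphic to the ordering used in \eqref{eq:eigT4} for each $W$, so no sign discrepancy arises.
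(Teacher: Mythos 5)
Your proposal follows essentially the same route as the paper: the paper defines, for each triple $1\le a<b<c\le n-1$ (equivalently, each $4$-set $W=\{a,b,c,n\}$ containing $n$), the vector $u_{a,b,c}$ obtained by planting the explicit $6$-eigenvector of $L_1^{\uparrow}(T_4)$ from \eqref{eq:eigT4} on the copy of $T_4$ spanned by the $2$-subsets of $W$ and extending by zero; it then observes that the principal submatrix of $L_1^{\uparrow}(T_n)$ on those twelve edges is exactly the matrix in \eqref{eq:L1upT4} plus $(n-4)I_{12}$ (the extra triangles through an internal edge involve a vertex outside $W$, so they only raise the diagonal from $2$ to $n-2$), giving the eigenvalue $6+(n-4)=n+2$, and finishes with linear independence via a coordinate unique to each vector. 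Your ``bookkeeping lemma'' and sign remarks are precisely this step, so the plan is sound.

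The one concrete slip is your suggested witness coordinate for independence: the edge of $T_n$ joining $\{i,n\}$ and $\{j,n\}$ lies in the $T_4$-copy attached to \emph{every} $4$-set $\{i,j,x,n\}$, so it carries a nonzero entry in $n-3$ of your vectors, not only in $\Zw_{\{i,j,k,n\}}$, and the proposed ``leading term'' fails as stated. The fix, which is what the paper does, is to use an edge of the copy whose two endpoints avoid $n$, e.g.\ the edge joining $\{i,j\}$ and $\{i,k\}$: both endpoints are $2$-subsets of $W'$ only if $\{i,j,k\}\subseteq W'$, and since $n\in W'$ and $|W'|=4$ this forces $W'=\{i,j,k,n\}$. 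With that choice each vector owns a private nonzero coordinate, so the family $\{\Zw_W : n\in W\}$ is independent outright and no upper-triangular ordering is needed.
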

\begin{proof}
For simplicity, let $E$ be the edge-set of $T_n$. For $1\leq a<b<c\leq n-1$, we define a vector $u_{a,b,c}\in \mathbb{R}^E$ as follows. We call the subgraph of $T_n$ in Figure \ref{fig:graph_abc} the graph associated with the vector $u_{a,b,c}$ and denote it by $G_{a,b,c}$. 

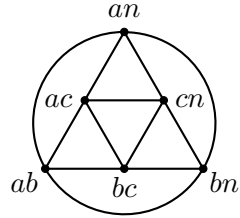
\begin{figure}[htbp]
\centering
\begin{tikzpicture}[scale=0.35]
\draw[fill=black] (0,0) circle (4pt);
\draw[fill=black] (3,0) circle (4pt);
\draw[fill=black] (6,0) circle (4pt);
\draw[fill=black] (1.5,2.6) circle (4pt);
\draw[fill=black] (3,5.2) circle (4pt);
\draw[fill=black] (4.5,2.6) circle (4pt);
%% vertex labels
\node at (-0.8,-0.5) {$ab$};
\node at (3,-0.7) {$bc$};
\node at (6.8,-0.5) {$bn$};
\node at (0.5,2.6) {$ac$};
\node at (5.5,2.6) {$cn$};
\node at (3,6) {$an$};
%%% edges T4
\draw[thick] (0,0)--(3,0)--(6,0)--(4.5,2.6)--(3,5.2)--(1.5,2.6)--(0,0);
\draw[thick] (1.5,2.6)--(3,0)--(4.5,2.6)--(1.5,2.6);
\draw[thick] (3,5.2) arc (90:210:3.46);
\draw[thick] (6,0) arc (-30:90:3.46);
\draw[thick] (0,0) arc (210:330:3.46);
\end{tikzpicture}
\caption{The graph of the vector $u_{a,b,c}$}
\label{fig:graph_abc}
\end{figure}
For each edge of $T_n$ that is not an edge of $G_{a,b,c}$, the entry of $u_{a,b,c}$ corresponding to that edge is $0$. For each edge in the graph $G_{a,b,c}$, the value of $u_{a,b,c}$ corresponding to that edge is $+1$ or $-1$ as described below:
\[
\begin{blockarray}{cc}
\begin{block}{c[c]}
ab,ac & +1 \\
ab,an & -1\\
ab,bc & -1\\
ab,bn & +1\\
ac,an & +1\\
ac,bc & +1\\
ac,cn & -1\\
an,bn & -1\\
an,cn & +1\\
bc,bn & -1\\
bc,cn & +1\\
bn,cn & -1\\
\end{block}
\end{blockarray}
\]
This vector is the same as the eigenvector of $L_1^{\uparrow}(T_4)$ appearing in equation \eqref{eq:eigT4} under the isomorphism between $T_4$ and $G_{a,b,c}$ given by $1\mapsto ab, 2\mapsto ac, 3\mapsto an, 4\mapsto bc, 5\mapsto bn, 6\mapsto cn$.

For arbitrary $n \geq 4$, each diagonal entry of $L_1^{\uparrow}(T_n)$ is $n-2$ because any edge in $T_n$ is contained in exactly $\lambda=n-2$ triangles. The principal submatrix of $L_1^{\uparrow}(T_n)$ corresponding to the edges in $G_{a,b,c}$ is the following:
{\small 
\begin{equation}\label{eq:eigT4a}
\begin{blockarray}{ccccccccccccc}
&12 & 13 & 14 & 15 & 23 & 24 & 26 & 35 & 36 & 45 & 46 & 56\\
\begin{block}{c[cccccccccccc]}
12 &n-2 & -1 & -1 & 0 & +1 & +1& 0 & 0 & 0 & 0 & 0 & 0\\
13 & -1&n-2  & 0  &-1 & -1  & 0 & 0  & +1& 0 & 0 & 0 & 0\\
14 & -1& 0  &n-2  &-1 &  0  & -1& 0  & 0. & 0 & +1 & 0 & 0\\
15 & 0 & -1 & -1 &n-2 & 0 & 0 & 0 & -1 & 0 & -1 & 0 & 0\\
23 & +1 & -1& 0 & 0 &n-2 & 0 & -1 & 0 & +1 & 0 & 0 & 0\\
24 & +1& 0 & -1 & 0 & 0 &n-2 & -1 & 0 & 0 & 0 & +1 & 0\\
26 & 0  & 0 & 0  & 0 & -1& -1&n-2 & 0 & -1 & 0 & -1 & 0\\
35 & 0 & +1& 0 & -1& 0& 0  & 0 &n-2  & -1 & 0 & 0 & +1\\
36 & 0 & 0  & 0 & 0 & +1& 0 & -1 & -1 &n-2 & 0 & 0 & -1\\
45 & 0 & 0 & +1 & -1 & 0& 0 & 0 & 0 & 0 &n-2 & -1 & +1\\
46 & 0 & 0 & 0 & 0 & 0& +1 & -1 & 0 & 0 & -1 &n-2 & -1\\
56 & 0 & 0 & 0 & 0 & 0& 0 & 0 & +1 & -1 & +1 & -1 &n-2\\
\end{block}
\end{blockarray}
\end{equation}}
The matrix above is the sum of the matrix in \eqref{eq:L1upT4} and $(n-4)I_{12}$. This fact and \eqref{eq:eigT4} imply that $u_{a,b,c}$ is an eigenvector of $L_1^{\uparrow}(T_n)$ corresponding to the eigenvalue $6+n-4=n+2$.

%In \eqref{eq:eigT4}, if the eigenvector entry at some coordinate $i$ is $+1$, then the dot product of row $i$ with the eigenvector omitting the diagonal entry is $6(1) - 2(1) = 4$. For the corresponding row in \eqref{eq:eigT4a} then the dot product of the row and the eigenvector is $(n-2)(1) + 4 = n+2$. Similar argument shows that if entry at the $i^{th}$ coordinate in \eqref{eq:eigT4} is $(-1)$, then the corresponding product in \eqref{eq:eigT4a} is $(n-2)(-1) + (-6+2) = - (n+2)$. 
%In addition, over all $u_{a,b,c}$ for triples $(a,b,c)$ with $1 \leq a < b < c \leq n$, only one of these vectors has a nonzero entry at the coordinate corresponding to the edge $(ab, ac)$, hence, these eigenvectors are linearly independent, and the proposition follows. 

The eigenvectors $u_{a,b,c}$, for $1\leq a<b<c\leq n$, are linearly independent. This is because given any entry $(ij,i\ell)$ say, with $i<j<\ell$, there is exactly one of the vectors $u_{a,b,c}$ that has a non-zero entry in the position $(ij,i\ell)$, namely the vector $u_{i,j,\ell}$. These facts imply that $n+2$ is an eigenvalue of $L_1^{\uparrow}(T_n)$ with multiplicity at least $\binom{n-1}{3}$. \end{proof}

\subsubsection{Eigenvalue \texorpdfstring{$n$}{n}}\label{eig_n}

The main result of this subsection is the following statement.
\begin{prop}\label{prop:eig_n}
The number $n$ is an eigenvalue of $L_1^{\uparrow}$ with multiplicity at least $\binom{n-1}{2}$.
\end{prop}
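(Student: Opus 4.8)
The plan is to continue along the lines of the previous two subsections and exhibit $\binom{n-1}{2}$ linearly independent eigenvectors of $L_1^{\uparrow}(T_n)$ for the eigenvalue $n$, one for each $2$-subset $\{a,b\}$ of $[n-1]$. As in the treatment of the eigenvalues $2$ and $n+2$, it is convenient to have the option of working with the signed triangle adjacency matrix $A$ defined by $L_2^{\downarrow}(T_n)=3I+A$ — recall that $L_1^{\uparrow}$ and $L_2^{\downarrow}=\delta_1\delta_1^{T}$ share their positive spectrum by Lemma \ref{nonzero}, so the eigenvalue $n$ of $L_1^{\uparrow}$ corresponds to the eigenvalue $n-3$ of $A$ — but I will phrase the construction directly on $\mathbb{R}^{E}$. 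For each $\{a,b\}\subseteq[n-1]$ I expect the eigenvector $z_{a,b}\in\mathbb{R}^{E}$ to be supported on the edges of $T_n$ lying in a fixed small ``local'' subgraph $G_{a,b}$ built from $\{a,b\}$ and the distinguished vertex $n$ — the analogue of the copies of $K_{n-2}$ used for eigenvalue $2$ and of $G_{a,b,c}\cong T_4$ used for eigenvalue $n+2$ — with entries $\pm1$ (and possibly a larger value on a distinguished part) read off from the coboundary signs \eqref{eq:signing}.

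The key steps, in order, are: (1) pin down the support subgraph $G_{a,b}$ and the signing of $z_{a,b}$; (2) verify $L_1^{\uparrow}(T_n)\,z_{a,b}=n\,z_{a,b}$ coordinate by coordinate using the entrywise description in Lemma \ref{lem:epsLdown}. For a coordinate $e\in E(G_{a,b})$ this should reduce, exactly as in \eqref{eq:eigT4a}, to the statement that the principal submatrix of $L_1^{\uparrow}(T_n)$ indexed by $E(G_{a,b})$ equals a fixed ``local'' up-Laplacian plus $cI$, where $c$ counts the triangles of $T_n$ through $e$ not contained in $G_{a,b}$, so that $n=(\text{local eigenvalue})+c$; here one uses that every edge of $T_n$ lies in exactly $\lambda=n-2$ triangles, the extra ones contributing $+1$ each to the diagonal and nothing off the support. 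For a coordinate $e\notin E(G_{a,b})$ one must check that the contributions of the ``boundary'' triangles cancel; by the identity $\epsilon_{F,F'}=[F:F\cap F'][F':F\cap F']=-[F\cup F':F][F\cup F':F']$ of Lemma \ref{lem:epsLdown}, this reduces to the vanishing of certain signed sums of the values of $z_{a,b}$ over the edges of $G_{a,b}$ incident (in $T_n$) to a common vertex — a finite check that does not grow with $n$. Finally, (3) linear independence: as with $u_{a,b,c}$ in the previous subsection, one singles out for each $\{a,b\}$ a coordinate of $\mathbb{R}^{E}$ (an edge of $T_n$ involving the vertex $n$ together with $a,b$) at which $z_{a,b}$ is nonzero while every $z_{a',b'}$ with $\{a',b'\}\neq\{a,b\}$ vanishes, which forces independence.

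The main obstacle is steps (1)--(2), namely isolating the correct local subgraph and signing so that the vector, extended by zero, is a genuine eigenvector of the \emph{whole} matrix $L_1^{\uparrow}(T_n)$ and not merely of a principal submatrix. In the eigenvalue-$(n+2)$ case this was painless because the eigenvalue $6$ of $L_1^{\uparrow}(T_4)$ (Proposition \ref{prop:eig_6_T4}) is simple and its single eigenvector already satisfies the boundary-cancellation condition; here the relevant local eigenvalue has larger multiplicity, so one must identify precisely which local eigenvector survives the closure conditions, and one must arrange that exactly $\binom{n-1}{2}$ independent survivors appear — a genuine bookkeeping point, since a naive choice of local subgraphs (for instance the $T_4$'s $G_{a,b,c}$, which are parametrized by $3$-subsets) would give the wrong count. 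Once the right family $\{z_{a,b}\}$ is written down, the verification is routine and stabilizes for all $n\ge4$, with the base case reducing to a small-order identity checked by direct matrix multiplication, just as in \eqref{eq:eigT4}; the exact multiplicity of $n$ (and hence the completeness of the list of eigenvalues in Theorem \ref{thm:Tn}) is then obtained together with that of $0$ and $n-1$ by the counting argument of the next subsection.
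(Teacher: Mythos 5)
Your proposal follows the same general strategy as the paper — for each pair $\{a,b\}\subseteq[n-1]$ build an explicit vector in $\re^{E}$, check $L_1^{\uparrow}z_{a,b}=n\,z_{a,b}$ coordinate by coordinate using the entrywise description of $L_1^{\uparrow}$, and get linear independence from a distinguished coordinate involving the vertex $n$ — but it stops short of the one step that actually constitutes the proof: you never write down the vectors. You yourself flag ``isolating the correct local subgraph and signing'' as the main obstacle and leave it unresolved, so steps (1)--(2), which carry essentially all of the content, are missing; asserting that the verification will be ``routine'' once the right family is found is not a proof that such a family exists, nor that it produces $\binom{n-1}{2}$ independent survivors. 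As it stands, the argument has a genuine gap.

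Your working hypothesis about the shape of the construction is also questionable. You expect $z_{a,b}$ to be supported on a fixed small subgraph $G_{a,b}$, in analogy with the copies of $T_4$ used for the eigenvalue $n+2$, so that the boundary-cancellation check is ``finite and does not grow with $n$.'' The paper's eigenvector $w_{a,b}$ is not of this kind: it is $w_{a,b}=\sum_{i}\delta_1^T v_{S_i}$, the sum of the signed boundaries of the $n-2$ pairwise vertex-disjoint triangles $S_0=\{\{a,b\},\{a,n\},\{b,n\}\}$ and $S_i=\{\{i,a\},\{i,b\},\{i,n\}\}$ for $i\in[n-1]\setminus\{a,b\}$, so its support consists of $3(n-2)$ edges and grows with $n$; what is bounded is only the number of \emph{types} of coordinates one must check (an edge inside some $S_t$, an edge joining two different $S_i$'s, an edge with exactly one endpoint in some $S_i$, an edge meeting none of them). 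Placing the vector in $\im\delta_1^T$ in this explicit way is also what makes the coordinate-wise verification tractable, since the signs are just the incidence numbers $[S_i:f]$. So you have correctly guessed the indexing set, the verification scheme, and the independence argument (the edge $\{\{a,n\},\{b,n\}\}$ is indeed a coordinate where only $w_{a,b}$ is nonzero), but without an explicit candidate $z_{a,b}$ — and with no evidence that a bounded-support candidate with eigenvalue $n$ exists at all — the proposal is an unexecuted plan rather than a proof or an alternative route.
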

\begin{proof}
%For simplicity, we denote $L := L_1^{\uparrow}$ in this subsection.

For $1 \leq a < b \leq n-1$, we will define a vector $w_{a,b}\in \re^{X_1}=\re^{E}$ and we will prove that $w_{a,b}, 1\leq a<b\leq n-1$ are linearly independent eigenvectors of $L_1^{\uparrow}$ corresponding to the eigenvalue $n$. %Recall that each diagonal entry of $L$ is $\lambda = n-2$. 

% https://q.uiver.app/#q=WzAsMTIsWzEsMCwiXFx7YSxuXFx9Il0sWzUsMCwiXFx7aSxhXFx9Il0sWzEsMiwiXFx7YixuXFx9Il0sWzUsMiwiXFx7aSxiXFx9Il0sWzMsMSwiXFx7aSxuXFx9Il0sWzAsMSwiXFx7YSxiXFx9Il0sWzQsMSwiU19pIl0sWzEsMSwiU18wIl0sWzcsMSwiXFx7aixuXFx9Il0sWzksMCwiXFx7aixhXFx9Il0sWzksMiwiXFx7aixiXFx9Il0sWzgsMSwiU19qIl0sWzAsMSwiIiwwLHsic3R5bGUiOnsiaGVhZCI6eyJuYW1lIjoibm9uZSJ9fX1dLFsyLDMsIiIsMCx7InN0eWxlIjp7ImhlYWQiOnsibmFtZSI6Im5vbmUifX19XSxbMCw0LCIiLDIseyJzdHlsZSI6eyJoZWFkIjp7Im5hbWUiOiJub25lIn19fV0sWzQsMSwiIiwyLHsic3R5bGUiOnsiaGVhZCI6eyJuYW1lIjoibm9uZSJ9fX1dLFsyLDQsIiIsMix7InN0eWxlIjp7ImhlYWQiOnsibmFtZSI6Im5vbmUifX19XSxbNCwzLCIiLDIseyJzdHlsZSI6eyJoZWFkIjp7Im5hbWUiOiJub25lIn19fV0sWzAsNSwiIiwyLHsic3R5bGUiOnsiaGVhZCI6eyJuYW1lIjoibm9uZSJ9fX1dLFs1LDIsIiIsMix7InN0eWxlIjp7ImhlYWQiOnsibmFtZSI6Im5vbmUifX19XSxbMSwzLCIiLDEseyJzdHlsZSI6eyJoZWFkIjp7Im5hbWUiOiJub25lIn19fV0sWzUsMSwiIiwxLHsic3R5bGUiOnsiaGVhZCI6eyJuYW1lIjoibm9uZSJ9fX1dLFs1LDMsIiIsMSx7InN0eWxlIjp7ImhlYWQiOnsibmFtZSI6Im5vbmUifX19XSxbMCwyLCIiLDEseyJzdHlsZSI6eyJoZWFkIjp7Im5hbWUiOiJub25lIn19fV0sWzgsOSwiIiwxLHsic3R5bGUiOnsiaGVhZCI6eyJuYW1lIjoibm9uZSJ9fX1dLFs4LDEwLCIiLDEseyJzdHlsZSI6eyJoZWFkIjp7Im5hbWUiOiJub25lIn19fV0sWzksMTAsIiIsMSx7InN0eWxlIjp7ImhlYWQiOnsibmFtZSI6Im5vbmUifX19XSxbMSw5LCIiLDEseyJzdHlsZSI6eyJoZWFkIjp7Im5hbWUiOiJub25lIn19fV0sWzQsOCwiIiwxLHsic3R5bGUiOnsiaGVhZCI6eyJuYW1lIjoibm9uZSJ9fX1dLFszLDEwLCIiLDEseyJzdHlsZSI6eyJoZWFkIjp7Im5hbWUiOiJub25lIn19fV1d
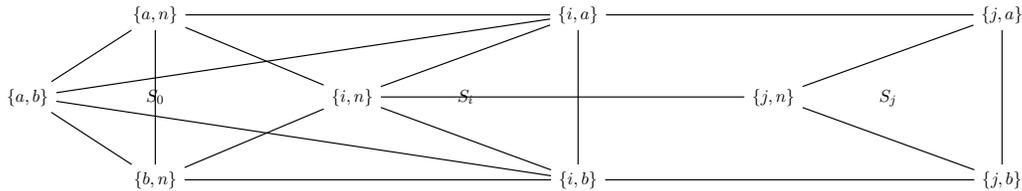
\begin{figure}[htbp]
    \centering
    \[\begin{tikzcd}[scale cd=0.6]
	& {\{a,n\}} &&&& {\{i,a\}} &&&& {\{j,a\}} \\
	{\{a,b\}} & {S_0} && {\{i,n\}} & {S_i} &&& {\{j,n\}} & {S_j} \\
	& {\{b,n\}} &&&& {\{i,b\}} &&&& {\{j,b\}}
	\arrow[no head, from=1-2, to=1-6]
	\arrow[no head, from=1-2, to=2-1]
	\arrow[no head, from=1-2, to=2-4]
	\arrow[no head, from=1-2, to=3-2]
	\arrow[no head, from=1-6, to=1-10]
	\arrow[no head, from=1-6, to=3-6]
	\arrow[no head, from=1-10, to=3-10]
	\arrow[no head, from=2-1, to=1-6]
	\arrow[no head, from=2-1, to=3-2]
	\arrow[no head, from=2-1, to=3-6]
	\arrow[no head, from=2-4, to=1-6]
	\arrow[no head, from=2-4, to=2-8]
	\arrow[no head, from=2-4, to=3-6]
	\arrow[no head, from=2-8, to=1-10]
	\arrow[no head, from=2-8, to=3-10]
	\arrow[no head, from=3-2, to=2-4]
	\arrow[no head, from=3-2, to=3-6]
	\arrow[no head, from=3-6, to=3-10]
    \label{eigenvalue_n_proof}
    \end{tikzcd}\]
    \caption{The triangles $S_0,\  S_i,$ and $S_j$ for some distinct $i,j\notin\{a,b,n\}$}
    \label{fig:eigenvalue_n_proof}
\end{figure}

Denote $S_0 = \{\{a,b\}, \{a,n\}, \{b,n\}\}$. For $1 \leq i \leq n-1, i \not \in \{a,b\}$, let $\ S_i = \{\{i,a\},\{i,b\},\{i,n\}\}$. For any triangle $\mathcal{T}$, denote by $v_{\mathcal{T}}$ the indicator vector of $\mathcal{T},$ i.e., the vector indexed by the triangles in $G$ and whose only nonzero entry is $1$ at the position indexed by $\mathcal{T}$. 

The triangles in the set $\{S_i: 0\leq i\leq n-1, i\notin \{a,b\}\}$ are pairwise vertex disjoint and therefore, are pairwise edge disjoint (see Figure \ref{fig:eigenvalue_n_proof} for an illustration). Therefore, if $e$ and $f$ are two edges such that $e \in S_i$ and $f \in S_j$ for some $1\leq i \not = j\leq n-1, i,j\notin \{a,b\}$, then $L_1^{\uparrow}(e,f) = 0$ since no triangles in $G$ contains both $e$ and $f$. Hence, the vectors $\delta_1^T v_{S_i}, 0 \leq i \leq n-1, i \notin \{a,b\}$ (which are the columns of $\delta_1^T$ corresponding to the triangles $S_i, 0\leq i \leq n-1, i\notin \{a,b\}$) have disjoint supports. 

For $1\leq a<b<n$, define
\begin{equation*}
    w_{a,b} = \sum_{i=0: i\notin \{a,b\}}^{n-1} \delta_1^T v_{S_i}.
\end{equation*}
For simplicity, we denote $w=w_{a,b}$ for the rest of this proof. If $f\in E$, then
\begin{equation}\label{eq:w_f}
    w_f=\sum_{i=0:i\notin \{a,b\}}^{n-1}[S_i:f].
\end{equation}
This shows that the nonzero entries of $w_{a,b}$ are all $\pm 1'$s and are precisely at the positions corresponding to the edges of the triangles $\{S_i:0\leq i\leq n-1, i\notin \{a,b\}\}$.

We will now show that
\begin{equation}\label{eq:verA}
    L_1^{\uparrow}w = n w \text{ or }  (L_1^{\uparrow}w)_e = n w_e, \forall e\in E. 
\end{equation}
%at each coordinate position indexed by some edge $e$. 

Let $e$ be an edge of $T_n$. Using \eqref{eq:w_f}, we obtain that
\begin{equation}\label{eq:verB}
    (L_1^{\uparrow}w)_e = \sum_{f \in E} L_1^{\uparrow}(e,f) w_f = \sum_{f \in E} L_1^{\uparrow}(e,f)\sum_{i=0:i\notin \{a,b\}}^{n-1}[S_i:f]   
    = \sum_{i=0:i\notin\{a,b\}}^{n-1}\sum_{f \in S_i} L_1^{\uparrow}(e,f) \ [S_i:f].
\end{equation}

If $e$ is not contained in a triangle with any edge of any $S_i$, then 
\begin{equation}\label{eq:e_no_triangle}
L_1^{\uparrow}(e,f)=0, \forall f\in S_i, 0\leq i\leq n-1, i\notin \{a,b\}. 
\end{equation}
In this case, $e$ is not contained in any $S_i$ and therefore, $w_e=0$. Hence, $(L_1^{\uparrow}w)_e=0=nw_e$.

We will group the remaining edges into several types and prove \eqref{eq:verA} each group separately. These categories are as follows: 
\begin{itemize}
    \item \textbf{Type 1:} $e \in S_{t}$ for some $t\in \{0,1,\ldots,n-1\}\setminus \{a,b\}$.
    \begin{proof}
        Assume that $e=\{x,y\} \subset S_{t}=\{x,y,z\}$, where $x < y.$ Using \eqref{eq:verB}, we get that
        \begin{align*}
            (L_1^{\uparrow}w)_e &= \sum_{f \in S_{t}} L_1^{\uparrow}(e,f) [S_{t}:f]\\
            &=L_1^{\uparrow}(e,e)[S_{t}:e]+L_1^{\uparrow}(e,\{y,z\})[S_{t}:\{y,z\}]+L_1^{\uparrow}(e,\{x,z\})[S_{t}:\{x,z\}] \\           
            &=\begin{cases}
                (n-2)(-1) + (-1)(+1) + (-1)(+1) = -n, & \text{ if } x < z < y\\
                (n-2)(1) + (1)(1) + (-1)(-1) = n, & \text{ otherwise} 
            \end{cases}\\
            &= \begin{cases}
                n[S_{t}:e], & \text{ if } x < z < y\\
                n[S_{t}:e], & \text{ otherwise} 
            \end{cases}\\
            &= nw_e.
        \end{align*}
    \end{proof}
    
    \item \textbf{Type 2:} The two endpoints of $e$ belong to $S_i$ and $S_j$ for some $i \not = j.$
    \begin{proof}
        If $i \not = 0$ and $j \not = 0,$ then $e$ must be one of the three edges 
        \begin{equation*}
            \{\{i,a\}, \{j,a\}\}, \{\{i,b\}, \{j,b\}\}, \{\{i,n\}, \{j,n\}\}. 
         \end{equation*}
        Since none of these three edges share a triangle with any of the edges in $\{S_m\}, m\in \{0,\ldots,n-1\}\setminus \{a,b\}$, using the same argument as the one preceding equation \eqref{eq:e_no_triangle}, we conclude that for these values of $e,$ both sides of \eqref{eq:verA} are $0$. 
        
        We may now assume $j = 0 < i$. There are precisely six edges that are incident with both a vertex in $S_0$ and a vertex in $S_i$ that we need to consider:
        \begin{gather}
            \{\{a,i\},\{a,n\}\}, \{\{b,i\},\{b,n\}\}\\
            \{\{a,b\},\{a,i\}\}, \{\{a,b\},\{b,i\}\},\\
            \{\{a,n\},\{i,n\}\}, \{\{b,n\},\{i,n\}\},\\
       \end{gather}
        We include two of these cases below, and the remaining four in Appendix \ref{appendix:e_n_type2}.
        \begin{itemize}
            \item $e = \{\{a,i\},\{a,n\}\}$. Using \eqref{eq:verB}, we obtain that
            \begin{gather}
                (L_1^{\uparrow}w)_e = \sum_{p=0:p\not \in\{a,b\}}^{n-1}\sum_{f \in S_p}L_1^{\uparrow}(e,f)[S_p:f]\\
                =L_1^{\uparrow}(e,\{\{a,b\},\{a,n\}\}) \cdot [S_0:\{\{a,b\},\{a,n\}\}] + L_1^{\uparrow}(e,\{\{i,a\},\{i,n\}\})\cdot [S_i:\{\{i,a\},\{i,n\}\}]\\
                = L_1^{\uparrow}(e,\{\{a,b\},\{a,n\}\}) - L_1^{\uparrow}(e,\{\{i,a\},\{i,n\}\})\\
                =-[e:\{a,n\}][\{\{a,b\},\{a,n\}\}:\{a,n\}]+[e:\{a,i\}][\{\{a,i\},\{i,n\}\}:\{a,i\}]\\
                =-(-1)(-1)+1\cdot 1=0=nw_e.
            \end{gather}
            %Define $T_1=\{\{a,b\}, \{a,n\}, \{i,a\}\}$ and $T_2=\{\{a,n\},\{i,a\},\{i,n\}\}.$ First observe that
            %\begin{gather} L_1^{\uparrow}(e,\{\{a,b\},\{a,n\}\}) = [T_1: e] \ \cdot \ [T_1 : \{\{a,b\},\{a,n\}\}] = - [T_1: \{\{a,b\}, \{i,a\}\}]=-1, \end{gather}
            %and,\begin{gather}                L_1^{\uparrow}(e, \{\{i,a\},\{i,n\}\}= [T_2 : \{\{i,a\},\{i,n\}\}] \ \cdot \ [T_2: e] = -[T_2 : \{\{a,n\}, \{i,n\}\}]=-1.\end{gather}
            %We observe that this implies $(L_1^{\uparrow}w)_e=0,$ and the proof concludes for this choice of $e.$

            \item $e=\{\{a,b\}, \{a,i\}\}$. Using \eqref{eq:verB}, we get that
            \begin{gather}
                (L_1^{\uparrow}w)_e = \sum_{p=0:p \not \in \{a,b\}}^{n-1} \sum_{f \in S_p}L_1^{\uparrow}(e,f) [S_p:f] \\
                = L_1^{\uparrow}(e, \{\{a,b\},\{a,n\}\})\ [S_0:\{\{a,b\},\{a,n\}\}] + L_1^{\uparrow}(e,\{\{i,a\},\{i,b\}\})\ [S_i:\{\{i,a\},\{i,b\}\}]\\
                = L_1^{\uparrow}(e, \{\{a,b\},\{a,n\}\}) + L_1^{\uparrow}(e,\{\{i,a\},\{i,b\}\})\\
                =-[e:\{a,b\}][\{\{a,b\},\{a,n\}\}:\{a,b\}]-[e:\{i,a\}][\{\{i,a\},\{i,b\}\}:\{i,a\}]\\
                =-[e:\{a,b\}]-[e:\{i,a\}]=0=nw_e.
            \end{gather}

            %In order to compute the sum, we consider the triangles $T_1=\{\{a,b\},\{a,n\},\{i,a\}\},$ and $T_2=\{\{a,b\},\{i,a\},\{i,b\}\}.$ We have,
            %\begin{gather}L_1^{\uparrow}(e, \{\{a,b\},\{a,n\}\}) = [T_1:e] [T_1:\{\{a,b\},\{a,n\}\}] = - [T_1:\{\{a,n\},\{i,a\}\}],\end{gather}and
            %\begin{gather}L_1^{\uparrow}(e,\{\{i,a\},\{i,b\}\})=[T_2:\{\{i,a\},\{i,b\}\}] [T_2:e]=-[T_2:\{\{a,b\},\{i,b\}\}].\end{gather}Finally, we observe that
            %\begin{gather}[T_1:\{\{a,n\},\{i,a\}\}] = -1 \iff b > i \iff [T_2:\{\{a,b\},\{i,b\}\}] = 1, \end{gather}    and conclude that $(L_1^{\uparrow}w)_e=0.$
        \end{itemize}

    \end{proof}
    \item \textbf{Type 3:} Only one endpoint of $e$ belongs to some $S_i$.
    \begin{proof}
        First, we show that $i \not = 0$. Indeed, if $e$ is an edge that is not in $S_0$ but shares a triangle with an edge in $S_0$, then the endpoint of $e$ not in $S_i$ must be of the form $\{a,x\}, \{b,x\},$ or $\{n,x\}.$ This is not possible since each of these vertices belong to $S_x$, contradicting the assumption that $e$ has only one of its endpoints in one of the $S_i'$s. Thus, we may assume $i > 0$. In turn, the endpoint of $e$ not in $S_i$ must be of the form $\{i,x\}$ for some $x \not \in \{a,b,n\}.$ Thus, there are three possible values of $e$ to consider: $\{\{i,a\},\{i,x\}\},\{\{i,b\},\{i,x\}\},$ and $ \{\{i,n\},\{i,x\}\}$.
        
        We verify \eqref{eq:verA} for one of these values below and we do the remaining two in Appendix \ref{appendix:e_n_type3}. 
        \begin{itemize}
            \item \textbf{Case 1:} $e = \{\{i,a\},\{i,x\}\}$. 
            Observe that if $f \in S_j$ for some $j \not =i$, we have that $L_1^{\uparrow}(e,f) = 0$ by the hypothesis on $e$. Using \eqref{eq:verB}, we get that
            \begin{gather*}
                (L_1^{\uparrow}w)(e)=\sum_{p=0:p \not \in \{a,b\}}^{n-1} \sum_{f \in S_p}L_1^{\uparrow}(e,f) [S_p:f] \\
                = \sum_{f \in S_i} L_1^{\uparrow}(e,f) [S_i:f]\\
                =L_1^{\uparrow}(e,\{\{i,a\},\{i,n\}\})[S_i:\{\{i,a\},\{i,n\}\}]+L_1^{\uparrow}(e,\{\{i,a\},\{i,b\}\})[S_i:\{\{i,a\},\{i,b\}\}]\\
                =-L_1^{\uparrow}(e,\{\{i,a\},\{i,n\}\})+L_1^{\uparrow}(e,\{\{i,a\},\{i,b\}\})\\
                =[e:\{i,a\}][\{\{i,a\},\{i,n\}\}:\{i,a\}]-[e:\{i,a\}][\{\{i,a\},\{i,b\}\}:\{i,a\}]\\
                =[e:\{i,a\}]-[e:\{i,a\}]=0=nw_{e}.
            \end{gather*}
            
        \end{itemize}
    \end{proof}
\end{itemize}

The vectors $w_{a,b}, 1\leq a<b\leq n-1$ are linearly independent. To see this, note that a vector $w_{c,d}$ has a non-zero entry at the coordinate position indexed by the edge $\{\{a,n\}, \{b,n\}\}$ if and only if $(a,b)=(c,d).$ Hence, the $n-$eigenspace of $L$ has dimension at least $\binom{n-1}{2}$.

\end{proof}

\subsubsection{Eigenvalue \texorpdfstring{$(n-1)$}{n-1}}\label{subsec:n_1}

Let us denote $L := L_1^{\uparrow}(T_n)$. Note that $T_n$ has $(n-2)\binom{n}{2}$ edges and therefore, $L$ is a square matrix of size $(n-2)\binom{n}{2}$. In addition, each diagonal entry of $L$ is $n-2$, since each edge is contained in exactly $n-2$ triangles. Thus,
\begin{equation}\label{eq:trL}
    \text{tr}(L) = (n-2)^2 \binom{n}{2} = \frac{n(n-1)(n-2)^2}{2}.
\end{equation}
Except for the diagonal entries, each row of $L$ has $2(n-2)$ nonzero entries, each $\pm 1$, which correspond to the other sides of the $n-2$ triangles containing this particular edge. Hence, the inner product of each row with itself equals $(n-2)^2 + 2(n-2)$. Therefore,
\begin{equation}\label{eq:trL2}
    \text{tr}(L^2) = (n-2)\binom{n}{2} ((n-2)^2+2(n-2)) = n(n-2)^2\binom{n}{2} = \frac{n^2(n-1)(n-2)^2}{2}.
\end{equation}
At this point, we have proved that $0$ has multiplicity at least $\binom{n}{2}-1$, $2$ has multiplicity at least $\binom{n-1}{2}$, $n$ has multiplicity at least $\binom{n-1}{2}$ and $n+2$ has multiplicity at least $\binom{n-1}{3}$. Denote by $\mu_1, \dots, \mu_t$ the remaining eigenvalues. We have that
\begin{equation*}
    t = (n-2) \binom{n}{2} - \Big(\binom{n}{2}-1\Big)-\binom{n-1}{2}-\binom{n-1}{2}-\binom{n-1}{3} = \frac{n(n-2)(n-4)}{2}.
\end{equation*}
Using \eqref{eq:trL}, we get that
\begin{gather*}
    \sum_{i=1}^{t} \mu_i
    = tr(L)  - 0 \cdot \Big(\binom{n}{2}-1 \Big)- 2 \cdot \binom{n-1}{2}-n \cdot \binom{n-1}{2}-(n+2) \cdot \binom{n-1}{3} \\
    = \frac{n(n-1)(n-2)(n-4)}{3},
\end{gather*}
From \eqref{eq:trL2}, we deduce that
\begin{gather*}
    \sum_{i=1}^{t} \mu_i^2 = tr(L^2) - 0^2 \cdot \Big(\binom{n}{2}-1 \Big)- 2^2\cdot \binom{n-1}{2}-n^2 \cdot \binom{n-1}{2}-(n+2)^2 \cdot \binom{n-1}{3}\\
    = \frac{n(n-1)^2(n-2)(n-4)}{3}.
\end{gather*}
Finally, we check that
\begin{gather*}
    \cfrac{(\sum_{i=1}^t \mu_i)^2}{\sum_{i=1}^t\mu_i^2} = \cfrac{n^2(n-1)^2(n-2)^2(n-4)^2}{9} \cdot \cfrac{3}{n(n-1)^2(n-2)(n-4)}  \\
    = \cfrac{n(n-2)(n-4)}{3} = t = \sum_{i=1}^t 1^2,
\end{gather*}
that is, the nonnegative values $\mu_1, \dots, \mu_t$ satisfy the equality condition of the Cauchy- Schwarz inequality. Thus, we must have $\mu_1 = \dots = \mu_t$. Plugging $\mu_i = \mu_1$ for each $i > 1$ in the equation for $\sum_{j=1}^t \mu_j$ above, we obtain $\mu_1 = \dots = \mu_t = n-1$. That is, all remaining eigenvalues are equal to $n-1$, and the eigenvalue $n-1$ has multiplicity $t = \frac{n(n-2)(n-4)}{2}$.

\subsection{Higher Dimensional Up-Laplacian eigenvalues}\label{sec:higher-lap}
% Before we begin, we mention a result regarding the spectrum of upper-Laplacian on $K_n^{k-1}$, the clique complex on $[n]$ with all subsets of size at most $k$. Gundert and Wagner \cite[Lemma 8]{GW} proved the following result.
% \begin{prop}\cite[Lemma 8]{GW}
%     The spectrum of $L^{\uparrow}_{k-1}(K_n^k)$ is given by
%     \[
%         \begin{pmatrix}
%         0 & n\\
%         \binom{n-1}{k-1} & \binom{n-1}{k}
%         \end{pmatrix}. \qedhere
%     \] 
% \end{prop}

In this section, we compute the up- and down-Laplacian spectra at each level for the clique complex of the triangular graph $T_n$.

\subsubsection{The Spectrum of  \texorpdfstring{$L_2^{\uparrow}(T_n)$}{L2-Tn}}

\begin{prop} The spectrum of $L_2^{\uparrow}(T_n)$ is
\begin{gather*}
    \begin{pmatrix}
        0 & n-1\\
        \binom{n}{3} + n\binom{n-2}{2} & n\binom{n-2}{3}
    \end{pmatrix}.
\end{gather*}

\end{prop}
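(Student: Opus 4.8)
The plan is to read off the spectrum from the block structure of $L_2^{\uparrow}(T_n)$ that is forced by the combinatorics of small cliques in the line graph $T_n = L(K_n)$. First I would classify the relevant faces of $\mathcal{K}(T_n)$. A $2$-face is a triangle of $T_n$, i.e.\ a triple of pairwise intersecting $2$-subsets of $[n]$; such a triple either has a common element of $[n]$, giving a \emph{claw} $\{ia,ib,ic\}$, or has union of size $3$, giving a \emph{triangle} $\{ab,bc,ca\}$, and these account for $n\binom{n-1}{3}$ and $\binom{n}{3}$ faces respectively. A $3$-face is a $4$-clique of $T_n$; a short check shows that four pairwise intersecting $2$-subsets of $[n]$ must all contain a common element, so every $3$-face has the form $\{ia,ib,ic,id\}$, and these are naturally partitioned according to their center $i\in[n]$ (in particular there are $n\binom{n-1}{4}$ of them).

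Next I would invoke Lemma~\ref{lem:epsLdown}: $(L_2^{\uparrow})_{F,F'}\neq 0$ requires $F\cup F'\in X_3$. Because every $3$-face has a center, a triangle-type $2$-face is contained in no $3$-face (hence contributes an isolated $0$ to $L_2^{\uparrow}$), and two claw-type $2$-faces with different centers are contained in no common $3$-face. Thus $L_2^{\uparrow}(T_n)$ is block diagonal, with one $\binom{n}{3}\times\binom{n}{3}$ zero block and, for each $i\in[n]$, a block $B_i$ indexed by the $\binom{n-1}{3}$ claws centered at $i$. The assignment $\{ia,ib,ic\}\mapsto\{a,b,c\}$ and $\{ia,ib,ic,id\}\mapsto\{a,b,c,d\}$ is an incidence-preserving bijection between the $2$- and $3$-faces at the center $i$ and the $3$- and $4$-subsets of $[n]\setminus\{i\}$, so $B_i$ is, up to relabeling, $L_2^{\uparrow}$ of the full simplex on the $n-1$ vertices $[n]\setminus\{i\}$. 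Since the up-Laplacian spectrum does not depend on the ordering of the ground set (Appendix~\ref{a:inv}), Proposition~\ref{prop:kn-eigs} applies (with $n$ replaced by $n-1$ and $i=2$) and gives that $B_i$ has spectrum $\begin{pmatrix} 0 & n-1 \\ \binom{n-2}{2} & \binom{n-2}{3}\end{pmatrix}$.

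Taking the union of the block spectra then yields $\binom{n}{3}+n\binom{n-2}{2}$ eigenvalues equal to $0$ and $n\binom{n-2}{3}$ equal to $n-1$, which is the claimed spectrum; the total count $\binom{n}{3}+n\binom{n-1}{3}$ matches the number of $2$-faces via Pascal's identity, and the argument also covers the degenerate case $n=4$, where each per-center block is the $1\times 1$ zero matrix. I expect the main obstacle to be the ``no leakage'' step --- the clean characterization of the $4$-cliques of $T_n$ as precisely the $4$-element subsets of stars of $K_n$ --- since this is exactly what isolates the triangle-type $2$-faces and decouples the per-center blocks; everything else is bookkeeping together with Proposition~\ref{prop:kn-eigs}.
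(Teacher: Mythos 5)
Your proposal is correct and follows essentially the same route as the paper's proof: isolate the triangle-type $2$-faces $\{ab,bc,ca\}$ as zero rows/columns, decompose the remaining claw-type faces into blocks by their common center $i$, identify each block with $L_2^{\uparrow}$ of the full complex on $[n]\setminus\{i\}$, and apply Proposition~\ref{prop:kn-eigs}. Your write-up is in fact slightly more careful than the paper's (you justify explicitly that every $4$-clique of $T_n$ is a star, invoke the ordering-invariance of the spectrum, and check the count and the $n=4$ case), but the underlying argument is the same.
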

\proof

Let $L:=L_2^{\uparrow}(T_n)$. Let $U_1$ and $U_2$ be the collections of triangles of $T_n$ of the forms $\{ai,bi,ci\}$ and $\{ab,bc,ca\}$, respectively. If $t \in U_2$, then the row and the column indexed by $t$ in $L$ is $0$. Let $L'$ be the principal submatrix of $L$ obtained by removing the rows and columns indexed by elements of $U_2$.  The nonzero part of the spectrum of $L$ is the same as that of $L'$, and for the $0-$ eigenvalue, it is an additional $\binom{n}{2}$ (note that it is possible that $0$ is not an eigenvalue of $L'$). 

Now, compute the spectrum of $L'$.  If $t_1,t_2$ are two triangles in $U_1$, then they have the forms respectively $\{ai,bi,ci\}$ and $\{a'j,b'j,c'j\}$, for some $a,b,c,a',b',c',i,j$. Note that unless $i = j$, $L'(t_1,t_2)=0$. Thus, We can view the matrix $L'$ in the block form with blocks $B_1, \dots, B_n$, where in $B_i$, contains the rows and columns  indexed by the triangles $\{(ai,bi,ci): a,b,c \in [n] \setminus \{i\}, a,b,c \text{ pairwise distinct}\}$. Each block has dimension $\binom{n-1}{2}$. 

Now, for a fixed $i$, the block $B_i$ is in fact, the matrix for $L_2^{\uparrow}(K_{n-1}) = L_2^{\uparrow}(K^{3}_{n-1})$.  Note here that the original orientations of the simplices get carried over. Thus, the spectrum of $B_i$ is given by the lemma above.

\begin{gather*}
    \begin{pmatrix}
        0 & n-1\\
        \binom{n-2}{2} & \binom{n-2}{3}
    \end{pmatrix},
\end{gather*}
which is independent of $i$. Now, since there are $n$ such blocks in $L'$, the spectrum of $L'$ is obtained by taking $n$ copies of the spectrum above:
\begin{gather*}
    \begin{pmatrix}
        0 & n-1\\
        n\binom{n-2}{2} & n\binom{n-2}{3}
    \end{pmatrix}.
\end{gather*}
Finally, the spectrum of $L$ is obtained by including $\binom{n}{3}$ copies of $0$:
\begin{gather*}
    \begin{pmatrix}
        0 & n-1\\
        \binom{n}{3} + n\binom{n-2}{2} & n\binom{n-2}{3}
    \end{pmatrix}.
\end{gather*}

\subsubsection{The Spectrum of \texorpdfstring{$L_k^{\uparrow}(T_n)$ for $3 \leq k \leq n-3$}{Lk-Tn}}
\begin{prop}
    The spectrum of $L_k^{\uparrow}(T_n)$ for $3 \leq k \leq n-3$ is
    \begin{gather*}
    \begin{pmatrix}
        0 & n-1\\
        n\binom{n-2}{k} & n\binom{n-2}{k+1},
    \end{pmatrix}.
    \end{gather*}
\end{prop}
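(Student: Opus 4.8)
The plan is to follow the same strategy used for $L_2^{\uparrow}(T_n)$ in the previous subsection, pushing it one level higher. Recall that the faces of the clique complex $\mathcal{K}(T_n)$ correspond to collections of edges of $K_n$ that are pairwise intersecting, and by the classical description of cliques in a line graph (for $n \neq 4$), every clique of $T_n$ of size $\geq 2$ is either a \emph{star} $\{ai : i \in S\}$ for some vertex $a$ of $K_n$ and some $S \subseteq [n]\setminus\{a\}$, or a \emph{triangle} $\{ab,bc,ca\}$. Since we are computing $L_k^{\uparrow}$ for $k \geq 3$, a $k$-face has at least $4$ vertices and hence \emph{cannot} be of triangle type; every $k$-face for $k \geq 3$ is a star. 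I would record this as the first step.

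Next I would analyze the block structure of $L := L_k^{\uparrow}(T_n)$ exactly as in the $L_2^{\uparrow}$ case. Two $k$-faces $F = \{ai : i \in S\}$ and $F' = \{a'j : j \in S'\}$ can be contained in a common $(k+1)$-face only if they are stars about the \emph{same} center $a = a'$: indeed a common $(k+1)$-face is again a clique of $T_n$ with $\geq 5$ vertices, so it is a star with some center $a$, and all its sub-$k$-faces are stars with that same center. Hence $L$ is block diagonal with one block $B_a$ for each $a \in [n]$, where $B_a$ is indexed by the $k$-faces that are stars centered at $a$ — equivalently, by the $(k+1)$-subsets of the $(n-1)$-set $[n]\setminus\{a\}$. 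I would then observe, as in the $L_2^{\uparrow}$ argument, that $B_a$ is precisely the matrix $L_k^{\uparrow}(K_{n-1}) = L_k^{\uparrow}(K_{n-1}^{k+1})$, because the star of center $a$ together with all its sub-faces is an isomorphic copy of the complex $K_{n-1}^{k+1}$ on ground set $[n]\setminus\{a\}$, and the induced orientations carry over (the restriction of the global vertex order of $T_n$ to these $k$-faces induces a consistent signing, so the combinatorial Laplacian of the sub-complex agrees with the corresponding principal submatrix of $L$). Since $k+1 \leq n-2$, Proposition~\ref{prop:kn-eigs} applies and gives
\[
\operatorname{spec}(B_a) = \begin{pmatrix} 0 & n-1 \\ \binom{n-2}{k} & \binom{n-2}{k+1} \end{pmatrix},
\]
independent of $a$.

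Finally, since $L$ is the direct sum of the $n$ blocks $B_a$, its spectrum is the union (with multiplicity) of $n$ copies of the above, yielding
\[
\operatorname{spec}(L_k^{\uparrow}(T_n)) = \begin{pmatrix} 0 & n-1 \\ n\binom{n-2}{k} & n\binom{n-2}{k+1} \end{pmatrix},
\]
which is the claim; the dimension check $n\left(\binom{n-2}{k}+\binom{n-2}{k+1}\right) = n\binom{n-1}{k+1} = |X_k|$ confirms no faces are missed. The one point requiring a little care — and the place I expect the only real friction — is justifying that a $k$-face of $T_n$ with $k \geq 3$ is always a star and never of "triangle" type, together with the claim that \emph{every} $k$-face lies in some block (i.e. there is no analogue of the $U_2$ triangles that had to be stripped off in the $L_2^{\uparrow}$ computation). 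Both follow from the line-graph clique classification, but I would state it cleanly and note the hypothesis $k \geq 3$ (and implicitly $n \geq 6$, so that $3 \leq k \leq n-3$ is nonvacuous and $n \neq 4$) is exactly what makes this work.
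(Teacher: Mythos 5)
Your proposal is correct and follows essentially the same route as the paper: the cliques of size $k+1\geq 4$ in $T_n$ are all stars of $K_n$, so $L_k^{\uparrow}(T_n)$ decomposes into $n$ blocks indexed by the star centers, each block being $L_k^{\uparrow}(K_{n-1}^{k+1})$, and Proposition \ref{prop:kn-eigs} then gives the spectrum as $n$ copies of that of a block. Your extra remarks (no triangle-type faces to strip off, and the dimension count) are just more explicit versions of what the paper leaves implicit.
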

\proof Let $L := L^{\uparrow}_{k}(T_n)$. For $k \geq 3$, the cliques of order $k+1$ in $T_n$ are of the form $\{ia_1,\dots, ia_{k+1}\}$ for some pairwise distinct collection $i, a_1, \dots, a_{k+1}\in [n]$. Denote the block $B_i = \{(ia_1,\dots,ia_{k+1}): a_1, \dots, a_k \in [n] \setminus \{i\}, a_i \not = a_j \text{ if } i \not = j\}$. We observe that if $q \in B_i, q' \in B_j$ and $i \not = j$, then $L(q,q') = 0$. Hence, we can view the matrix $L_k$ as blocks indexed by $B_i$.

Let us compute the spectrum of $B_i$ for a fixed $i$. As in the previous proof, we observe that $B_i$ is matrix $L_k^{\uparrow}(K_{n-1}) = L_k^{\uparrow}(K_{n-1}^{k+1})$, which has spectrum
\begin{gather*}
    \begin{pmatrix}
        0 & n-1\\
        \binom{n-2}{k} & \binom{n-2}{k+1},
    \end{pmatrix}.
\end{gather*}
which is independent of $i$. Thus, the spectrum of $L$ is obtained by taking $n$ copies (since there are $n$ blocks $B_i$) of this spectrum:
\begin{gather*}
    \begin{pmatrix}
        0 & n-1\\
        n\binom{n-2}{k} & n\binom{n-2}{k+1},
    \end{pmatrix}.
\end{gather*}
as desired. 

\subsection{The Spectrum of \texorpdfstring{$L_k^{\downarrow}(T_n)$}{Lk-Tn}}
\begin{prop}
The spectrum of $L_k^{\downarrow}(T_n)$ for $k \geq 4$ is given by
    \begin{gather*}
    \begin{pmatrix}
        0 & n-1\\
        n\Big[\binom{n-1}{k}-\binom{n-2}{k}\Big] & n\binom{n-2}{k}.
    \end{pmatrix}.
\end{gather*}
\end{prop}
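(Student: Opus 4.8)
The plan is to relate $L_k^{\downarrow}(T_n)$ to $L_{k-1}^{\uparrow}(T_n)$ via Lemma~\ref{nonzero}, exactly as in the proof of Lemma~\ref{down}. Since $L_k^{\downarrow} = \delta_{k-1}\delta_{k-1}^T$ and $L_{k-1}^{\uparrow} = \delta_{k-1}^T\delta_{k-1}$, these two matrices have the same nonzero spectrum with multiplicities; so the nonzero eigenvalues of $L_k^{\downarrow}(T_n)$ are read off from the (already computed) spectrum of $L_{k-1}^{\uparrow}(T_n)$. For $k \geq 4$ we have $k-1 \geq 3$, so provided $k-1 \leq n-3$ (i.e.\ $k \leq n-2$) the preceding proposition gives that the only nonzero eigenvalue of $L_{k-1}^{\uparrow}(T_n)$ is $n-1$, with multiplicity $n\binom{n-2}{k-1} \cdot$ — wait, one must be careful: the nonzero-eigenvalue multiplicity of $L_{k-1}^{\uparrow}$ is $n\binom{n-2}{k}$ (the ``$n\binom{n-2}{(k-1)+1}$'' entry). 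Hence $n-1$ is an eigenvalue of $L_k^{\downarrow}(T_n)$ with multiplicity $n\binom{n-2}{k}$, and all other eigenvalues are $0$. One should also handle the boundary cases $k = n-2, n-1$ and large $k$ separately, checking directly that the formula $n\binom{n-2}{k}$ still records the correct multiplicity (it vanishes for $k \geq n-1$, consistent with $T_n$ having no $k$-faces of that dimension), and noting that for $k=n-2$ one needs $L_{n-3}^{\uparrow}$ which is covered by the range $3 \leq k-1 \leq n-3$.

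Next I would compute the total number of $(k-1)$-dimensional faces of $\mathcal{K}(T_n)$ to pin down the multiplicity of the $0$ eigenvalue of $L_k^{\downarrow}(T_n)$, which is a square matrix of size $|X_{k-1}|$... again one must be careful with indexing: $L_k^{\downarrow} \in \Mat{X_{k-1}}{X_{k-1}}{\re}$, so its size is $|X_{k-1}|$, and the multiplicity of $0$ is $|X_{k-1}| - n\binom{n-2}{k}$. So I must count $(k-1)$-cliques in $T_n$. For $k \geq 4$ (so $k-1 \geq 3$), a clique of size $k$ in $T_n$ is a set of $k$ pairwise-intersecting $2$-subsets of $[n]$; by the sunflower/Helly-type argument familiar for line graphs of complete graphs, once the clique has size $\geq 3$ it is of ``star type'': all $k$ edges share a common vertex $i$, giving $n$ choices of center and $\binom{n-1}{k}$ choices of the $k$ other endpoints. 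Hence $|X_{k-1}| = n\binom{n-1}{k}$, and the multiplicity of $0$ is $n\binom{n-1}{k} - n\binom{n-2}{k} = n\big[\binom{n-1}{k} - \binom{n-2}{k}\big]$, as claimed.

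Assembling these pieces yields the stated spectrum. The one genuinely delicate point — the ``main obstacle'' — is the correct bookkeeping of dimension indices in Lemma~\ref{nonzero}: one must match $L_k^{\downarrow}$ with $L_{k-1}^{\uparrow}$ (not $L_k^{\uparrow}$), confirm that $k-1$ falls in the range $3 \leq k-1 \leq n-3$ where the previous proposition applies, and read the nonzero multiplicity from the \emph{second} column of that proposition's spectrum array (the $n\binom{n-2}{(k-1)+1} = n\binom{n-2}{k}$ term) rather than the first. The clique-count $|X_{k-1}| = n\binom{n-1}{k}$ should be stated with a one-line justification (the $k \geq 4$ hypothesis is exactly what makes the star-type structure of cliques automatic, so no extra triangle-type cliques intrude), and the edge cases $k \in \{n-2, n-1\}$ and $k \geq n$ checked against the convention that empty binomial coefficients vanish.
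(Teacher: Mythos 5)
Your proposal follows essentially the same route as the paper: the nonzero spectrum of $L_k^{\downarrow}=\delta_{k-1}\delta_{k-1}^T$ is read off from $L_{k-1}^{\uparrow}=\delta_{k-1}^T\delta_{k-1}$ via Lemma \ref{nonzero} (for $k\geq 4$ one has $3\leq k-1$, so the earlier proposition applies and gives the eigenvalue $n-1$ with multiplicity $n\binom{n-2}{k}$), and the multiplicity of $0$ is then obtained by rank--nullity from a face count; your one-line justification that every clique of size at least $4$ in $T_n$ is a star, so that there are $n\binom{n-1}{k}$ cliques of size $k$, makes explicit a count the paper leaves implicit.

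One caveat concerns the bookkeeping you yourself flag as the delicate point. You assert $L_k^{\downarrow}\in \Mat{X_{k-1}}{X_{k-1}}{\re}$, but by the paper's Section \ref{sec:defn} definition, $L_k^{\downarrow}=\delta_{k-1}\delta_{k-1}^T$ with $\delta_{k-1}\in\Mat{X_k}{X_{k-1}}{\re}$, so $L_k^{\downarrow}$ is indexed by $X_k$ (cliques of size $k+1$), a set of cardinality $n\binom{n-1}{k+1}$, not by $X_{k-1}$. Under that literal definition the nullity would be $n\binom{n-1}{k+1}-n\binom{n-2}{k}=n\binom{n-2}{k+1}$, whereas the quantity you (and the stated proposition) obtain, $n\bigl[\binom{n-1}{k}-\binom{n-2}{k}\bigr]=n\binom{n-2}{k-1}$, is the nullity of $L_{k-1}^{\uparrow}$. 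Your choice of domain does reproduce the convention the paper itself uses here (its proof takes the ambient dimension from ``the domain of $L_{k-1}$,'' namely $n\binom{n-1}{k}$), so you match both the statement and its proof; but be aware that this identification is off by one dimension relative to the general definitions, and if one insists on $L_k^{\downarrow}$ acting on $\re^{X_k}$ the zero multiplicity changes as indicated, while the nonzero part of the spectrum is unaffected.
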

\proof
The proposition follows from the observation that the nonzero part of the spectrum of $L_k^{\downarrow}$ is the same as that of $L_{k-1}^{\uparrow}.$ The multiplicity of the $0-$ eigenvalue is obtained using the rank-nullity theorem and observing that the dimension of the domain of $L_{k-1}$ is $n \binom{n-1}{k}$.

\section{Graphs with trivial first cohomology groups}\label{subsec:Tch}

In the previous sections, we showed that the clique complexes of several classes of strongly regular graphs have the property that $\im \delta_0 = \ker \delta_1,$ or, equivalently, the first cohomology group $H^1$ is trivial. This is the same as $\dim \ker L_1^{\uparrow} =|V(G)|-1$ if $G$ is connected. 

In this section, we will develop some general techniques for proving the existence of this property in graphs. We apply these techniques to show that chordal graphs, certain conference graphs, and all but finitely many Paley graphs have trivial first cohomology groups. We also apply our findings to general strongly regular graphs and obtain a linear inequality involving the $k,\lambda,$ and $\mu$ parameters that guarantees a trivial first cohomology group. We start by introducing some notations and terminologies.

\begin{defn}
If $X_i = \{f_1,\dots,f_p\},$ then $\re^{X_i}$ is an $\re-$vector space with a basis $\{1_{f_j}\}_{j=1}^p,$ where $1_{f_j}$ is the indicator function of $f_j$ (which is the function taking value $1$ at $f_j,$ and $0$ everywhere else). Any given $g \in \re^{X_i}$ may be written uniquely in the form $g = \sum_{j=1}^p c_j 1_{f_j}, c_j \in \re.$ We will slightly abuse the notation and write $g = \sum_{j=1}^p c_j f_j,$ writing $f_j$ for $1_{f_j}.$ Further, if one or more of the $c_i$ are $0'$s, we may omit some or all of those terms in such an expression of $g.$ We also define the coefficient function $[f_j,g] := c_j$ for $f_j \in X_i.$
\end{defn}
% \begin{defn}
%     If $X_i = \{f_1,\dots,f_p\}$, then an element $f \in \re^{X_i}$ has an unique expression of the form $\sum_{j=1}^p c_j 1_{f_j},$ where each $c_j \in \re,$ and $1_{f_j}$ is the indicator function $X_i \to \re$ taking value $1$ at $f_j$ and $0$ at any other element of $X_i$. Given such a $f=\sum_{j=1}^k c_j 1_{f_j}$, we will slightly abuse the notation to write $f =\sum_{j=1}^k c_j f_j$ by writing $f_j$ for its indicator function $1_{f_j}.$
% \end{defn}

\begin{defn}
    For some $g \in \re^{X_i},$ let $g = \sum_{j=1}^s c_j' f_j'$ be the unique representation of $g$ as described above where each $c_j' \not = 0.$ We call this the canonical representation of $g$ and we define the support of $g$ to be $\{f_j'\}_{j=1}^s$. 
\end{defn}

% \begin{defn}
%     Let $f = \sum_{i=1}^p c_j f_j \in \re^{X_i}$ where the $f_j'$s are distinct elements of $X_i$ and each $c_i \in \re$ is nonzero. Then the canonical representation of $f$ is $\sum_{j=1}^k c_j f_j \in \re^{X_i}$, the support of $f$ is $\{f_j\}_{j=1}^p,$ the coefficients of $f$ are $\{c_j\}_{j=1}^p$, the weight of $f$ is $k$, and the total weight of $f$ is $\sum_{i=1}^k |c_i|.$ We also define the coefficient function $[x,f]:= c_i$ if $x=f_i$ for some $1 \leq i\leq k$, and $0$ otherwise.
% \end{defn}

\begin{defn}
If $e$ is an edge and $v$ a vertex, we simplify the notation $[e:\{v\}]$ from \eqref{eq:signing} as $[e:v]$.
\end{defn}

We will now obtain a simple spanning set for $\ker \delta_0^T.$

\begin{defn}\label{def:TC}
Let $C=(v_1,\dots,v_{\ell})$ be an ordered tuple of vertices in $G$ such that $v_1,\dots,v_{\ell}$ form a cycle in $G$. Denoting $v_{\ell+1} = v_1$ and $e_i = \{v_i, v_{i+1}\}$ for $1 \leq i \leq \ell$, define
    \begin{gather}
        T(C) =  \sum_{i=1}^{\ell} c_i e_i \in \re^{X_1},
    \end{gather}
    where the coefficients $c_1,\dots, c_{\ell}$ are defined recursively as follows:
    \begin{gather}\label{eq:coeffTC}
        c_i = \begin{cases}
            1, & \text{ if } i=1\\
            -c_{i-1}[e_{i-1} : v_i] \cdot [e_i:v_i], & \text{ if } 2\leq i \leq \ell.
        \end{cases}
\end{gather}
For simplicity, we will use $T(v_1,\ldots,v_{\ell})$ to denote $T(C)$ when necessary.
\end{defn}

For the graph in Figure \ref{graph1}, we have that
\begin{gather}
    T(1,2,3) = \{1,2\}+\{2,3\}-\{1,3\}, \\
    T(1,3,2) = \{1,3\} - \{2,3\} - \{1,2\}, \\
    T(1,2,4,3) = \{1,2\} + \{2,4\} - \{3,4\} - \{1,3\}.
\end{gather}
%with signs computed according to the definition of $T$ as stated above.

\begin{lemma}\label{lem:TCkerdelta0}
If $C=(v_1,\dots,v_{\ell})$ is an ordered tuple of vertices in $G$ such that $v_1,\dots,v_{\ell}$ form a cycle, then $T(C) \in \ker \delta_0^T.$
\end{lemma}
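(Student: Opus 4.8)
The plan is to show directly that $\delta_0^T\,T(C) = 0$ by evaluating the vertex-coordinate of $\delta_0^T\,T(C)$ at an arbitrary vertex $v$ and checking it vanishes. Recall $\delta_0^T = \partial_1 \in \Mat{X_0}{X_1}{\re}$, so for $v \in V(G)$ we have $(\delta_0^T\,T(C))_v = \sum_{i=1}^{\ell} c_i\,(\delta_0^T)_{v,e_i} = \sum_{i=1}^{\ell} c_i\,[e_i:v]$. Since $[e_i:v] = 0$ unless $v \in e_i$, and $v$ lies on at most two of the cycle edges $e_1,\dots,e_\ell$ (exactly two if $v$ is one of the $v_j$'s appearing on the cycle, namely $e_{j-1}$ and $e_j$; zero otherwise), the sum collapses to at most two terms. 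So the whole claim reduces to the local statement: if $v = v_j$ for some $j$ (with indices mod $\ell$, so $e_{j-1}$ and $e_j$ are the two incident edges), then $c_{j-1}[e_{j-1}:v_j] + c_j[e_j:v_j] = 0$.

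The key step is then just unwinding the recursion \eqref{eq:coeffTC}. For $2 \le j \le \ell$, by definition $c_j = -c_{j-1}[e_{j-1}:v_j]\,[e_j:v_j]$, and since $[e_j:v_j] \in \{\pm 1\}$ we have $[e_j:v_j]^2 = 1$, so multiplying through gives $c_j[e_j:v_j] = -c_{j-1}[e_{j-1}:v_j]$, i.e. $c_{j-1}[e_{j-1}:v_j] + c_j[e_j:v_j] = 0$ exactly as needed. The one remaining case is the "wrap-around" vertex $v = v_1$, whose two incident cycle edges are $e_\ell = \{v_\ell, v_1\}$ and $e_1 = \{v_1, v_2\}$; here I need $c_\ell[e_\ell:v_1] + c_1[e_1:v_1] = 0$, and this does \emph{not} follow from the recursion since $c_1 = 1$ was set by hand rather than forced. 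This is the main (and really the only) obstacle.

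To handle the wrap-around vertex, I would iterate the recursion to get a closed form: $c_i = \prod_{k=2}^{i}\bigl(-[e_{k-1}:v_k]\,[e_k:v_k]\bigr)$ for $1 \le i \le \ell$ (empty product $=1$ when $i=1$). Telescoping the signs $[e_k:v_k]$ inside this product — each interior edge $e_k$ contributes $[e_k:v_{k}]$ from the $k$-th factor and $[e_k:v_{k+1}]$ from the $(k{+}1)$-th factor — leaves $c_\ell = (-1)^{\ell-1}[e_1:v_1]\,[e_\ell:v_\ell]$ (the unpaired endpoint signs are precisely $[e_1:v_1]$ and $[e_\ell:v_\ell]$). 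Then $c_\ell[e_\ell:v_1] = (-1)^{\ell-1}[e_1:v_1]\,[e_\ell:v_\ell]\,[e_\ell:v_1]$. Now use the fact that for any edge $e = \{x,y\}$ with $x < y$ one has $[e:x] = 1$ and $[e:y] = -1$ (from \eqref{eq:signing}), hence $[e_\ell:v_\ell]\,[e_\ell:v_1] = -1$ since $v_1$ and $v_\ell$ are the two distinct endpoints of $e_\ell$. Therefore $c_\ell[e_\ell:v_1] = (-1)^{\ell}[e_1:v_1] = -(-1)^{\ell-1}[e_1:v_1]$.

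It remains to match this against $c_1[e_1:v_1] = [e_1:v_1]$. This forces the parity condition $(-1)^{\ell-1} = 1$, i.e. it works cleanly only when $\ell$ is odd; for even $\ell$ the naive computation gives $c_\ell[e_\ell:v_1] + c_1[e_1:v_1] = 2[e_1:v_1] \ne 0$, which would be a contradiction. So I expect the resolution is one of: (a) the closed form telescopes differently than the crude sign-count above suggests — I would recheck the telescoping carefully, tracking each $[e_k:v_k]$ and $[e_k:v_{k+1}]$ factor, because the signs $[e_k:v_k]$ versus $[e_k:v_{k+1}]$ for the \emph{same} edge $e_k$ are negatives of each other, which introduces extra $(-1)$'s that may exactly cancel the $(-1)^{\ell-1}$; (b) the statement as intended allows $T(C)$ to be the zero vector in degenerate cases; or most likely (c) the telescoping, done correctly, yields $c_\ell[e_\ell:v_1] = -[e_1:v_1]$ unconditionally. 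I would therefore carry out the telescoping meticulously: write $c_\ell = \prod_{k=2}^{\ell}(-1)\cdot\prod_{k=2}^{\ell}[e_{k-1}:v_k]\cdot\prod_{k=2}^{\ell}[e_k:v_k] = (-1)^{\ell-1}\bigl(\prod_{k=1}^{\ell-1}[e_k:v_{k+1}]\bigr)\bigl(\prod_{k=2}^{\ell}[e_k:v_k]\bigr)$, then combine the two products edge-by-edge: for each $k \in \{2,\dots,\ell-1\}$ the edge $e_k$ appears once as $[e_k:v_{k+1}]$ and once as $[e_k:v_k]$, contributing $[e_k:v_k][e_k:v_{k+1}] = -1$; this leaves the unpaired factors $[e_1:v_2]$ and $[e_\ell:v_\ell]$, giving $c_\ell = (-1)^{\ell-1}(-1)^{\ell-2}[e_1:v_2][e_\ell:v_\ell] = -[e_1:v_2][e_\ell:v_\ell]$. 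Hence $c_\ell[e_\ell:v_1] = -[e_1:v_2][e_\ell:v_\ell][e_\ell:v_1] = [e_1:v_2]$ (using $[e_\ell:v_\ell][e_\ell:v_1]=-1$), and finally $c_\ell[e_\ell:v_1] + c_1[e_1:v_1] = [e_1:v_2] + [e_1:v_1] = 0$ since $v_1, v_2$ are the two endpoints of $e_1$. This completes the check at $v_1$, and combined with the easy cases above establishes $\delta_0^T\,T(C) = 0$.
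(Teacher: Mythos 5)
Your proof is correct and follows essentially the same route as the paper's: check the vanishing at each interior cycle vertex directly from the recursion \eqref{eq:coeffTC}, then handle the wrap-around vertex $v_1$ by telescoping the product to get $c_\ell = -[e_1:v_2][e_\ell:v_\ell]$ and using that the two endpoint signs of an edge are opposite. The exploratory middle paragraph (and the stray claim that $[e:x]=1$, $[e:y]=-1$ for $x<y$, which is backwards but harmless since only products of the two endpoint signs are ever used) is superseded by your careful final telescoping, which matches the paper's computation exactly.
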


\begin{proof}
Let $v$ be a vertex of $G$. The non-zero entries of $T(C)$ correspond to the edges of $C$ and the non-zero entries of the $v$-th row of $\delta_0^T$ are exactly those entries labeled by the edges containing $v$. Therefore, if $v\notin \{v_1,\ldots,v_{\ell-1},v_{\ell}\}$, then the coefficient of $v$ in $\delta_0^{T}T(C)$ is $0$.

It remains to check what happens when $v\in \{v_1,\ldots,v_{\ell-1},v_{\ell}\}$.

For $2 \leq i \leq \ell$, using \eqref{eq:coeffTC}, the coefficient of $v_i$ in $\delta_0^T\ T(C)$ is
    \begin{gather}
        c_{i-1}[e_{i-1}:v_{i}] + c_i[e_i:v_i] = c_{i-1}[e_{i-1}:v_{i}] -c_{i-1}[e_{i-1} : v_i] \cdot [e_i:v_i] \cdot [e_i:v_i] = 0.
    \end{gather}

% Note
% Mutasim's proof; correct but has been replaced by Sebi's proof which is slightly easier to read.
% 
% To see that the coefficient of $v_1$ is also zero, note that for each $i$, $[v_{i+1}, \delta_0^T(c_ie_i)] = -[v_i, \delta_0^T(c_ie_i)],$ and, for $1 \leq i \leq \ell-2$, $[v_{i+1}, \delta_0^T(c_ie_i)] = - [v_{i+1}, \delta_0^T(c_{i+1}e_{i+1})]$. These relations imply that
% \begin{gather*}
%     [v_1, \delta_0^T(c_1e_1)]=[v_2,\delta_0^T(c_2e_2)]=[v_3,\delta_0^T(c_3e_3)]=\dots=[v_{\ell},\delta_0^T(c_{\ell}e_{\ell})]
% \end{gather*}
% Using the relations mentioned above, we get that
% \begin{gather*}
%     [v_1, \delta_0^T(c_{\ell}e_{\ell})] = - [v_{\ell}, \delta_0^T(c_{\ell}e_{\ell})] = -[v_1, \delta_0^T(c_1e_1)].
% \end{gather*}
% Thus,
% \begin{gather*}
%     [v_1, \delta_0^T(T(C))] = [v_1, \delta_0^T(c_1e_1)] + [v_{\ell}, \delta_0^T(c_{\ell}e_{\ell})] = 0,
% \end{gather*}
% as needed.

We now show that the coefficient of $v_1$ is also $0.$ From \eqref{eq:coeffTC}, we have that $c_1,\ldots, c_{\ell}\in \{-1,1\}$ and that 
\begin{align*}
    c_2&=-c_1[e_1:v_2][e_2:v_2],\\
    c_3&=-c_2[e_2:v_3][e_3:v_3],\\
    \vdots &= \vdots \ \ \ \ \ \ \ \ \ \ \ \  \vdots \ \ \ \ \ \ \ \  \ \vdots \\% \vdots \vdots \vdots \vdots \vdots,\\
    c_{\ell}&=-c_{\ell-1}[e_{\ell-1}:v_{\ell}][e_{\ell}:v_{\ell}].
\end{align*}
Multiplying these equations and dividing both sides by $c_2\cdot c_3\cdot \ldots \cdot c_{\ell-1}\neq 0$, we get that 
\begin{align*}
    c_{\ell}&=(-1)^{\ell-1}\cdot c_1\cdot \prod_{i=2}^{\ell}[e_{i-1}:v_{i}][e_{i}:v_{i}]=(-1)^{\ell-1}\cdot 1\cdot [e_1:v_2][e_{\ell}:v_{\ell}]\prod_{j=2}^{\ell-1}([e_j:v_j][e_j:v_{j+1}])\\
    &=(-1)^{\ell-1}[e_1:v_2][e_{\ell}:v_{\ell}](-1)^{\ell-2}=-[e_1:v_2][e_{\ell}:v_{\ell}].
\end{align*}
Therefore,
\begin{equation*}
    [e_1:v_1]c_1+[e_{\ell}:v_1]c_{\ell}=[e_1:v_1]-[e_{\ell}:v_1][e_{\ell}:v_{\ell}][e_1:v_2]=[e_1:v_1]+[e_1:v_2]=0.
\end{equation*}

% These relations imply that
%     \begin{gather}
%        \delta_0^{T}T(C)=\delta_0^T \sum_{i=1}^{\ell}c_ie_i = [e_1:v_1]c_1 - [e_{\ell}:v_1]c_{\ell}. 
%     \end{gather}
%     Since we already know that $[v_{\ell}, \delta_0^T \sum_{i=1}^{\ell} c_ie_i] = 0,$ we conclude that
%     \begin{gather}
%         [v_1, \delta_0^T c_{\ell}v_{\ell}] = - [v_k, \delta_0^T c_{\ell}v_{\ell}] = [v_{\ell}, \delta_0^T \sum_{i=1}^{\ell-1}c_ie_i ] = -[e_1:v_1],
%     \end{gather}
%     which then implies $[v_1, \delta_0^T \sum_{i=1}^{\ell} c_ie_i] = 0.$
\end{proof}

The vectors of the form $T(C)$ form a spanning set for $\ker \delta_0^T$. 
\begin{prop}\label{prop:spanTC}
The elements of the set 
\begin{equation}
\{T(C): C=(v_1,\dots,v_{\ell}) \text{ is an ordered tuple of vertices such that } v_1,\dots,v_{\ell} \text{ is a cycle}\}
\end{equation}
span $\ker \delta_0^T.$
\end{prop}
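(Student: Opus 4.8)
The plan is to show that the vectors $T(C)$, where $C$ ranges over all ordered tuples that form cycles in $G$, span the cut space $\ker \delta_0^T$. By Lemma \ref{lem:TCkerdelta0}, each $T(C)$ lies in $\ker\delta_0^T$, so one containment is free; I need the reverse, namely that every element of $\ker\delta_0^T$ is a linear combination of such vectors. The natural first step is to recall a standard structural fact about $\ker\delta_0^T$: since $\delta_0$ is a signed edge-vertex incidence matrix of $G$, a classical result (see Biggs \cite{Biggs}, or the rank-nullity argument used in the Proposition preceding Section \ref{sec:PG-GQ}) gives $\dim\ker\delta_0^T = |E(G)| - |V(G)| + c$ where $c$ is the number of connected components. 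This dimension equals the cycle rank of $G$, which is precisely the number of edges outside a spanning forest.

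The cleanest route is therefore to fix a spanning forest $F$ of $G$ and argue that the vectors $T(C)$ associated to \emph{fundamental cycles} already span. For each edge $e = \{u,v\} \notin F$, there is a unique path in $F$ from $u$ to $v$, and together with $e$ this forms a cycle $C_e$; order its vertices so that $e$ appears as the last edge $e_\ell$. I would show that $T(C_e)$ has a nonzero coefficient on the chord $e$ (immediate from \eqref{eq:coeffTC}, since all $c_i \in \{-1,1\}$) and, crucially, \emph{zero} coefficient on every other chord (since all other edges of $C_e$ lie in $F$). Consequently the matrix expressing the $|E|-|V|+c$ vectors $\{T(C_e)\}_{e \notin F}$ in the basis of $\re^{X_1}$ is, after restricting attention to the chord-coordinates, a diagonal matrix with $\pm1$ entries — hence these vectors are linearly independent. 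Since their number equals $\dim\ker\delta_0^T$ and they all lie in that space, they form a basis, and a fortiori the full collection $\{T(C)\}$ spans.

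The main obstacle, and the only place requiring care, is the bookkeeping around the signing \eqref{eq:signing}: one must be sure that the recursive definition \eqref{eq:coeffTC} is well-defined regardless of the chosen ordering of the cycle's vertices (which Lemma \ref{lem:TCkerdelta0} and its proof already essentially establish, via the telescoping-product computation closing the cycle) and that the coefficient $c_\ell$ on the chord is genuinely nonzero — this is clear because the recursion only ever multiplies by $\pm1$. A secondary point is to handle disconnected $G$: one simply takes a spanning forest component-by-component, and cycles live within single components, so no cross-terms arise. I would also note as an alternative that one can avoid invoking the dimension formula entirely: given an arbitrary $z \in \ker\delta_0^T$, subtract off multiples of the $T(C_e)$ to kill all chord-coordinates, leaving a vector supported on forest edges alone; but a nonzero vector in $\ker\delta_0^T$ supported on a forest is impossible (a leaf edge of the forest-support would have a degree-one endpoint, forcing its coefficient to vanish, and one peels inward), so the remainder is $0$. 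Either finish works; the fundamental-cycle basis version is shorter to write.
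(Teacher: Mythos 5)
Your proof is correct and follows essentially the same route as the paper: both arguments rest on the fundamental cycles of a spanning forest, whose associated vectors $T(C_e)$ span $\ker\delta_0^T$. The only difference is packaging — the paper cites Biggs's spanning-tree basis theorem for the cycle space and then uses Proposition \ref{prop:TC} to identify those basis vectors with scalar multiples of $T(C)$, whereas you reprove the basis fact directly (nonzero $\pm1$ coefficient on the defining chord, zero on all other chords, plus the dimension count $|E|-|V|+c$, or the chord-elimination and forest-support peeling alternative), which makes your argument self-contained but not substantively different.
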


This result follows from Biggs \cite[Ch.4,5]{Biggs} and Proposition \ref{prop:TC} below. Biggs uses a slightly different notation which we briefly recall now. Let $\Gamma$ be a graph with $n$ vertices and $m$ edges. For each edge $e_{j}=\{v, w\}$ of $\Gamma$, we may define an orientation on the edge $e_{j}$ by assigning one of the vertices $\{v,w\}$ as the positive end and the other vertex as the negative end. With an orientation chosen for each edge of $\Gamma$, the incidence matrix $D=(d_{ij})_{1\leq i\leq n, 1\leq j\leq m}$ is the $n \times m$ matrix whose entries are
$$
d_{ij} = \begin{cases}
+1, & \text{if } v_i \text{ is the positive end of } e_j,\\
-1, & \text{if } v_i \text{ is the negative end of } e_j,\\
0, & \text{otherwise.}\\
\end{cases} 
$$
Note that $\delta_0^T$ is an incidence matrix. 

The cycle-subspace of a connected $\Gamma$ is defined as the kernel of the linear transformation whose matrix is $D$. If $Q$ is a set of edges in $\Gamma$ that form a cycle in $\Gamma$, we may choose one of the two cyclic orderings/orientations of the edges in $Q$, and define a vector $\xi_Q \in \re^{X_1}$ by 
\begin{equation*}
\xi_Q(e_j)=
\begin{cases}
  +1, \text{ if } e_j \in Q \text{ and the orientation of } e_j \text{ in } Q \text{ matches its orientation in } \Gamma, \\
  -1, \text{ if } e_j\in Q \text{ and the orientation of } e_j \text{ in } Q  \text{ is not the same as its orientation in } \Gamma,\\ 
  0, \text{ if } e_j \not \in Q.     
\end{cases}
\end{equation*}
 It is known that $\xi_Q$ is in the cycle space of $\Gamma$ (see \cite[Prop 4.5]{Biggs}). If $T$ is a spanning tree of $\Gamma$, then for each edge $g$ of $\Gamma$ that is not in $T$, there is a unique cycle $Q=(T,g)$ containing $g$ and edges in $T$ only (\cite[Lemma 5.1]{Biggs}). In addition, as $g$ runs through the set $E\Gamma-T$, the $m-n+1$ elements $\xi_{T,g}$ form a basis of the cycle space of $G$ (see \cite[Theorem 5.2]{Biggs}) For an arbitrary graph, the result follows from applying this observation to each component.

%We also make the following observation. 
% Let $(v_1,\dots,v_{\ell})$ be an ordered tuple of vertices forming a cycle in $G$. Then up to scalar multiplication, $T(v_1,\dots,v_{\ell})$ is the only $\ker \delta_0^T$ element whose support is $\{\{v_i,v_{i+1 \mod k}\}| 1 \leq i \leq \ell \}.$ This can be seen by observing that we may assume $c_1=1$ in the formulation in the proof above, and the remaining coefficients are then uniquely determined by the formula given in Definition \ref{def:TC}. 

% In the case where weight of $f \in \ker \delta_0^T$ is $3$, it is easily verified that $f = T(a,b,c) = \delta_1^T(\{a,b,c\}).$ That is, $\ker \delta_0^T$ elements of weight $3$ are elements of $\im \delta_1^T.$

\begin{prop}\label{prop:TC}
Let $C=(v_1,\dots,v_{\ell})$ be an ordered tuple of vertices that form a cycle in $G$ with edges $e_1=\{v_1,v_2\},\ldots, e_{\ell-1}=\{v_{\ell-1},v_{\ell}\}$, and $e_{\ell}=\{v_{\ell},v_1\}$. If $x \in \ker \delta_0^T$ has support $\{e_1,\dots,e_{\ell}\}$, then $x = c \cdot T(C)$ for some non-zero $c$.
\end{prop}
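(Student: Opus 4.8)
The plan is to show that any $x \in \ker \delta_0^T$ whose support is exactly the edge set $\{e_1,\dots,e_\ell\}$ of the cycle $C$ is completely determined, up to a scalar, by a single coefficient, and that the recursion defining $T(C)$ is forced by membership in $\ker \delta_0^T$. Write $x = \sum_{i=1}^\ell a_i e_i$ with each $a_i \neq 0$. First I would fix a vertex $v_i$ with $2 \le i \le \ell$ of the cycle and read off the $v_i$-entry of $\delta_0^T x$. Since the support of $x$ is contained in the cycle edges, the only edges incident to $v_i$ that can contribute are $e_{i-1} = \{v_{i-1},v_i\}$ and $e_i = \{v_i,v_{i+1}\}$; here I use that $C$ being a cycle means $v_i$ has exactly these two neighbours among the cycle vertices (if the cycle has length $3$, the two incident cycle edges are still distinct, so the argument is unaffected). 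Hence $0 = (\delta_0^T x)_{v_i} = a_{i-1}[e_{i-1}:v_i] + a_i[e_i:v_i]$, which rearranges (using $[e_i:v_i]^2 = 1$) to $a_i = -a_{i-1}[e_{i-1}:v_i][e_i:v_i]$. This is exactly the recursion \eqref{eq:coeffTC} satisfied by the coefficients $c_i$ of $T(C)$.

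Next I would set $c := a_1$ (which is nonzero since $e_1$ is in the support) and argue by induction on $i$ that $a_i = c \cdot c_i$ for all $1 \le i \le \ell$: the base case is the definition of $c$, and the inductive step is immediate from comparing the recursion just derived for the $a_i$ with \eqref{eq:coeffTC} for the $c_i$. This gives $x = c \cdot T(C)$ as vectors supported on $\{e_1,\dots,e_\ell\}$, and since both sides vanish off this set, the identity holds in all of $\re^{X_1}$. The one remaining loose end is the vertex $v_1$: the recursion above was derived only for $i \ge 2$, so a priori the $v_1$-entry of $\delta_0^T x$ could impose a further constraint. But this entry is $a_1[e_1:v_1] + a_\ell[e_\ell:v_1] = c\big(c_1[e_1:v_1] + c_\ell[e_\ell:v_1]\big)$, and this is $c$ times the $v_1$-entry of $\delta_0^T T(C)$, which is $0$ by Lemma \ref{lem:TCkerdelta0}; so no new information arises and the proof is complete.

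The only mildly delicate point — and the one I would state carefully rather than gloss over — is that for a genuine cycle each vertex $v_i$ lies on precisely two of the edges $e_1,\dots,e_\ell$, so that the $v_i$-row of $\delta_0^T$, restricted to the support of $x$, has exactly two nonzero entries. Once this is pinned down, the rest is a one-line recursion match plus the already-proven Lemma \ref{lem:TCkerdelta0} to dispose of the closing vertex $v_1$, so I do not anticipate a substantive obstacle beyond bookkeeping the signs $[e:v] \in \{\pm 1\}$.
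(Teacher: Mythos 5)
Your proposal is correct and follows essentially the same route as the paper's proof: read off the $v_j$-entries of $\delta_0^T x$ for $j\ge 2$ to obtain the recursion $a_j=-a_{j-1}[e_{j-1}:v_j][e_j:v_j]$, match it with \eqref{eq:coeffTC} by induction, and conclude $x=a_1\,T(C)$ with $a_1\neq 0$ since $e_1$ lies in the support. Your extra remarks (the closing vertex $v_1$ imposing no new constraint via Lemma \ref{lem:TCkerdelta0}, and each cycle vertex lying on exactly two cycle edges) are just additional care beyond what the paper writes, not a different argument.
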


\begin{proof}
%Let $x = \sum_{i=1}^{\ell} c_i e_i,$ where $c_i \not = 0$ for $1 \leq i \leq \ell.$ We claim that $x = c_1 T(C).$ For each $1 \leq i \leq \ell-1$, we have\begin{gather}0 = [v_{i+1}, \delta_0^T(x)] = [v_{i+1}, \delta_0^T(c_ie_i)] + [v_{i+1}, \delta_0^T(c_{i+1}e_{i+1})], \text{ i.e.,}\\        c_{i+1}[e_{i+1}:v_{i+1}] = -c_i[v_{i+1}:e_i] \implies c_{i+1} = -c_i[e_{i+1}:v_{i+1}][e_i:v_{i+1}] \end{gather}
%Recursively, this shows that each of the coefficients $c_2,c_3,\dots,c_{\ell}$ is uniquely determined by the value of $c_1.$ On the other hand, we may write $c_1T(C)=\sum_{i=1}^{\ell}c_i'e_i,$ where each $c_i'\not=0$ and $c_1'=c_1.$ Since $c_1T(C) \in \ker \delta_0^T$, a similar argument as above can be carried out to show that:\begin{gather} c_{i+1}'=-c_i'[e_{i+1}:v_{i+1}][e_i:v_{i+1}], \ 1\leq i \leq \ell-1.    \end{gather}Since $c_1=c_1',$ recursively we may conclude that $c_j = c_j'$ for $2 \leq j \leq \ell,$ or equivalently, $x=c_1T(C).$

As in Definition \ref{def:TC}, denote $T(C)=\sum_{i=1}^{\ell}c_ie_i$. Let $x=\sum_{i=1}^{\ell}\alpha_ie_i$. As $\ker\delta_0^T x=0$, for each $j\in \{2,\ldots,\ell\}$, $[e_{j-1}:v_j]\alpha_{j-1}+[e_j:v_j]\alpha_j=0$ and therefore,
\begin{equation}\label{eq:coeffxL418}
    \alpha_j=-\alpha_{j-1}[e_{j-1}:v_j][e_j:v_j].
\end{equation}
Using \eqref{eq:coeffTC}, \eqref{eq:coeffxL418} and induction on $j$, we deduce that $\alpha_j=\alpha_1c_j$ for each $j\in \{1,\ldots,\ell\}$. This implies that $x=\alpha_1 T(C)$ and finishes our proof.
\end{proof}

\begin{prop}\label{rmk-tc-uniqueness}
If the ordered tuple of vertices $(v_1,\dots,v_{\ell})$ form a cycle in $G$, then    
\begin{equation*}
T(v_1,\dots,v_{\ell}) = T(v_i,\dots,v_{\ell},v_1,\dots,v_{i-1}) \text{ or } T(v_1,\dots,v_{\ell}) = - T(v_i,\dots,v_{\ell},v_1,\dots,v_{i-1}).
\end{equation*}
Thus, $T(v_1,\dots,v_{\ell}) \in \im \delta_1^T$ if and only if $T(v_i,\dots,v_{\ell},v_1,\dots,v_{i-1}) \in \im \delta_1^T$ for some $i$ (and so, for every $i$).
\end{prop}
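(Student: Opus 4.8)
The plan is to prove the two claims by directly computing how $T$ transforms under a cyclic shift of the index tuple, using the recursive definition \eqref{eq:coeffTC}. Write $C = (v_1,\dots,v_\ell)$ and $C' = (v_i,\dots,v_\ell,v_1,\dots,v_{i-1})$, and let $T(C) = \sum_{k=1}^\ell c_k e_k$ and $T(C') = \sum_{k=1}^\ell c'_k e'_k$ in the respective recursive normalizations. The key observation is that $C$ and $C'$ traverse the \emph{same} cyclic sequence of edges: as unordered sets $\{e_1,\dots,e_\ell\} = \{e'_1,\dots,e'_\ell\}$, with the correspondence $e'_k = e_{k+i-1}$ (indices mod $\ell$). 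So the supports of $T(C)$ and $T(C')$ are identical, and both vectors lie in $\ker\delta_0^T$ by Lemma \ref{lem:TCkerdelta0}. By Proposition \ref{prop:TC}, any two elements of $\ker\delta_0^T$ with this common support are scalar multiples of each other; hence $T(C) = \alpha\, T(C')$ for some nonzero $\alpha$.

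It then remains to pin down $\alpha$. Since every coefficient $c_k$ and $c'_k$ produced by \eqref{eq:coeffTC} lies in $\{+1,-1\}$, the scalar $\alpha = c_j / c'_j$ (for any edge $e_j = e'_{j'}$ in the common support) is itself $\pm 1$. This already gives the stated dichotomy $T(C) = T(C')$ or $T(C) = -T(C')$, which is all the proposition claims; one does not actually need to compute which sign occurs in terms of the $[e:v]$ data. The final sentence of the statement — that $T(v_1,\dots,v_\ell) \in \im\delta_1^T$ iff $T(v_i,\dots,v_\ell,v_1,\dots,v_{i-1}) \in \im\delta_1^T$ — is then immediate: $\im\delta_1^T$ is a linear subspace, so it is closed under multiplication by $-1$, and two vectors differing by a sign lie in it together or not at all.

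The one place requiring a little care is verifying the edge correspondence $\{e_k\} = \{e'_k\}$ rigorously, i.e., that cyclically rotating the vertex tuple genuinely rotates the edge tuple — this is where one must be careful about the wrap-around edge $e_\ell = \{v_\ell, v_1\}$ and the convention $v_{\ell+1} = v_1$. Concretely, in $C'$ the consecutive pairs are $(v_i,v_{i+1}),\dots,(v_{\ell-1},v_\ell),(v_\ell, v_1),(v_1,v_2),\dots,(v_{i-2},v_{i-1})$, and each such pair is exactly one of the original $e_k$; the pair $(v_\ell, v_1)$ is $e_\ell$ and fits into the sequence precisely because of the wrap-around convention. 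Once this bijection of edge sets is in hand, the argument is just an appeal to Lemma \ref{lem:TCkerdelta0} plus Proposition \ref{prop:TC} followed by the $\pm 1$ observation, so I expect the write-up to be short. The main (minor) obstacle is simply bookkeeping the index shift cleanly rather than any substantive difficulty.
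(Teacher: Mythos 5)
Your argument is correct and follows essentially the same route as the paper: both invoke Lemma \ref{lem:TCkerdelta0} to place the rotated vector in $\ker\delta_0^T$, apply Proposition \ref{prop:TC} to the common cycle support to get a nonzero scalar multiple, note the scalar is $\pm 1$ since all coefficients are $\pm 1$, and conclude the membership statement because $\im\delta_1^T$ is a subspace. The only cosmetic difference is that the paper first reduces to a rotation by one position, whereas you handle an arbitrary rotation directly, which is equally valid.
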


\begin{proof}
For the first statement, it suffices to show that 
\begin{equation*}
T(v_1,\dots,v_{\ell}) = T(v_{\ell},v_1,\dots,v_{\ell-1}) \text{ or } T(v_1,\dots,v_{\ell}) = -T(v_{\ell},v_1,\dots,v_{\ell-1}).
\end{equation*}
Denote by $C=(v_1,\dots,v_{\ell})$ and $C'=(v_{\ell},v_1,\dots,v_{\ell-1}).$ Note that $T(C') \in \ker \delta_0^T$ and the support of $T(C')$ is the cycle $(v_1,\dots,v_{\ell})$ in $G$. The previous proposition implies that $T(C') = cT(C)$, where $c$ is the coefficient of the edge $\{v_1,v_2\}$ in $T(C'),$ and equals $+1$ or $-1$.
    
    %given by (as in the proof of the previous proposition)
    %\begin{gather} - [\{v_1,v_l\}, T(C')]\cdot [\{v_1,v_{\ell}\} : v_1] \cdot [\{v_1,v_2\}:v_1] = -[\{v_1,v_{\ell}\} : v_1] \cdot [\{v_1,v_2\}:v_1] \in  \{\pm 1\}.
    %\end{gather}
    This shows that $T(C) \in \{\pm T(C')\}.$ The second statement follows by observing that $T(v_1,\dots,v_{\ell})$ and $T(v_i,\dots,v_{\ell},v_1,\dots,v_{i-1})$ are non-zero scalar multiples of each other, so one of them belongs to the subspace $\im \delta_1^T$ if and only if both of them do. 
\end{proof}

The next two lemmas are the key ingredients in most of the results in this section. 

\begin{lemma}\label{prop:wheel4}
Let $G$ be a graph with four vertices $a,b,c,d$ forming a cycle. If vertex $e$ is a common neighbor of $a,b,c,d$, then there are $x_1,x_2,x_3,x_4 \in \{\pm 1\}$ such that 
    \begin{gather}
        T((a,b,c,d)) = \delta_1^T (x_1 \{a,b,e\} + x_2 \{b,c,e\}  + x_3 \{c,d,e\}  + x_4 \{a,d,e\}).
    \end{gather}
    Consequently, $T((a,b,c,d))) \in \im \delta_1^T.$ 
\end{lemma}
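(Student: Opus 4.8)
The plan is to use Proposition~\ref{prop:TC}: since $T((a,b,c,d))$ lies in $\ker\delta_0^T$ with support exactly the $4$-cycle $\{a,b\},\{b,c\},\{c,d\},\{a,d\}$, any vector in $\ker\delta_0^T$ with that same support is a non-zero scalar multiple of it. So it suffices to exhibit \emph{some} element of $\im\delta_1^T$ that lies in $\ker\delta_0^T$ (automatic, by Lemma~\ref{lem:basics}(iii), since $\delta_0^T\delta_1^T=0$) and whose support is precisely the four edges of the cycle $(a,b,c,d)$. The natural candidate is the image under $\delta_1^T$ of a signed sum of the four triangles $\{a,b,e\},\{b,c,e\},\{c,d,e\},\{a,d,e\}$ of the ``wheel'' $W_4$ on $a,b,c,d,e$ — these exist in the clique complex precisely because $e$ is a common neighbour of all four and $a,b,c,d$ form a cycle (so each consecutive pair is an edge and each triple is a triangle).

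\textbf{Key steps.} First I would write $y = x_1\{a,b,e\} + x_2\{b,c,e\} + x_3\{c,d,e\} + x_4\{a,d,e\}$ with $x_1,x_2,x_3,x_4\in\{\pm1\}$ to be chosen, and compute $\delta_1^T y$. By the definition of $\delta_1$ (equivalently, $\partial_2 = \delta_1^T$), for each triangle $\{u,v,w\}$ the column of $\delta_1^T$ has $\pm1$ entries exactly in the three edge-rows $\{u,v\},\{v,w\},\{u,w\}$. So $\delta_1^T y$ is supported on the edges $\{a,b\},\{b,c\},\{c,d\},\{a,d\}$ (the four ``rim'' edges, each appearing in exactly one of the four triangles) together with the four ``spoke'' edges $\{a,e\},\{b,e\},\{c,e\},\{d,e\}$ (each appearing in exactly two of the four triangles). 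The second step is to choose the signs $x_i$ so that the spoke-edge coefficients all cancel: the coefficient of $\{a,e\}$ is $x_1[\{a,b,e\}:\{a,e\}] + x_4[\{a,d,e\}:\{a,e\}]$, and similarly for $\{b,e\},\{c,e\},\{d,e\}$; setting each of these four linear conditions to zero gives a $4\times 4$ homogeneous $\{\pm1\}$ system on $(x_1,x_2,x_3,x_4)$ whose matrix (indexed by spokes $e$-incident, and by consecutive triangle pairs around the rim) is essentially a signed $4$-cycle incidence matrix; such a cycle-type system always has a nontrivial $\{\pm1\}$ solution, obtained by propagating a choice of $x_1$ around the cycle — and one should check the ``closing-up'' parity works out, which is the same kind of telescoping-product argument used in the proof of Lemma~\ref{lem:TCkerdelta0}. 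With those signs fixed, $\delta_1^T y$ is supported exactly on the four rim edges, hence by Proposition~\ref{prop:TC} equals $c\cdot T((a,b,c,d))$ for some nonzero $c$; rescaling the $x_i$ (they are still in $\{\pm1\}$, up to possibly an overall sign which we absorb) gives the stated identity. Finally, membership $T((a,b,c,d))\in\im\delta_1^T$ is immediate.

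\textbf{Expected obstacle.} The only delicate point is the sign bookkeeping: verifying that the homogeneous $\{\pm1\}$ system for the spoke cancellations is consistent (i.e.\ the product of signs around the $4$-cycle is $+1$, so one can actually close up), and then confirming that the resulting nonzero scalar $c$ relating $\delta_1^T y$ to $T((a,b,c,d))$ is itself $\pm1$ so that, after possibly flipping all $x_i$, the coefficient of $\{a,b\}$ matches that in $T((a,b,c,d))$ (namely $+1$, by the normalization $c_1=1$ in Definition~\ref{def:TC}). Both checks are of the ``telescope a product of $\pm1$'s'' flavor already rehearsed in Lemma~\ref{lem:TCkerdelta0}, so I expect no real difficulty, just care with the orientation conventions from \eqref{eq:signing}. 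An alternative that sidesteps choosing signs explicitly: observe directly that $\delta_1^T$ applied to \emph{any} $\{\pm1\}$-combination of the four wheel-triangles lands in $\ker\delta_0^T$, that the image of $\delta_1^T$ restricted to the span of those four triangles is at most $4$-dimensional while its intersection with the edge-space of the $5$-vertex wheel is controlled, and that there must be a combination killing all four spokes since the four spoke-cancellation functionals cannot be independent on a $4$-dimensional space once one notes their sum (with appropriate signs) vanishes identically — but the explicit propagation argument is cleaner to write.
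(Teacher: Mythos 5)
Your proof is correct, but it takes a genuinely different route from the paper. The paper proves the lemma by brute force: after normalizing $a=\min\{a,b,c,d\}$ it enumerates the possible relative orderings of $a,b,c,d$ and of $e$ (fifteen cases, displayed in a figure), exhibits the signs $x_1,\dots,x_4$ explicitly in each case, and only then invokes the support/uniqueness argument (Proposition \ref{prop:TC} via Proposition \ref{rmk-tc-uniqueness}) to identify the image with $\pm T((a,b,c,d))$. You replace the case analysis by a uniform sign-propagation argument: impose the four spoke-cancellation conditions, solve them cyclically, and then use Proposition \ref{prop:TC} plus an overall sign flip exactly as the paper does in its final step. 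The one point you flag as delicate — that the propagation closes up — does indeed work out, and can be settled either way you suggest: directly, the product of signs around the cycle of triangles is $\prod_{i}\bigl(-(-1)^{\mathrm{pos}_e(T_i)}\bigr)=(-1)^{\sum_i \mathrm{pos}_e(T_i)}$, and $\sum_i \mathrm{pos}_e(T_i)=2\,\#\{v\in\{a,b,c,d\}:v<e\}$ is even, so the parity is always $+1$; or, more slickly, after cancelling three spokes the fourth vanishes automatically, because $\delta_1^T y\in\ker\delta_0^T$ and the remaining spoke would be the only edge of the support incident with $e$ (the same support argument the paper uses later, e.g.\ in Lemma \ref{prop:supp_red} and Theorem \ref{thm:conference-h1}). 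Your approach buys a shorter, ordering-free proof that explains why the fifteen cases all succeed; the paper's buys complete explicitness of the coefficients at the cost of the enumeration.
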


\begin{proof}
We may assume that $a=\min\{a,b,c,d\}$. There are three possibilities to consider depending on the order on the set $\{a,b,c,d\}:$ when the two extremal values are adjacent ($a < b < c < d$ or $a < c < b < d$), and when the two extremal values are on the opposite ends of the cycle ($a < b <  c > d)$. In each of these three cases, there are five possibilities depending on the relative order of $e$ to $a,b,c,d-$ yielding fifteen cases in total. Below we demonstrate the existence of $x_1,x_2,x_3,x_4$ satisfying the equation above. In Figure \ref{fig:enter-label2}, we use symbols $2,4,6,8$ for $a,b,c,d$ based on the order on $\{a,b,c,d\}$ so that $2 < 4 < 6 < 8$ holds. For $e$, we use one of the symbols $1,3,5,7,9$ depending on its relative order to the remaining symbols. The $x_1,x_2,x_3,x_4$ values are shown at the center of the corresponding triangle. Also, the red and blue edges respectively denote the $+1$ and $-1$ coefficients of $\ker \delta_0^T$ elements. For example, the top - left figure expresses the equation:
    \begin{gather}
        \delta_1^T(\{1,2,4\} + \{1,4,8\}-\{1,6,8\}-\{1,2,6\}) = \{2,4\} + \{4,8\} - \{6,8\} - \{2,6\}.
    \end{gather}
    \begin{figure}[h!]
        \centering
        \input{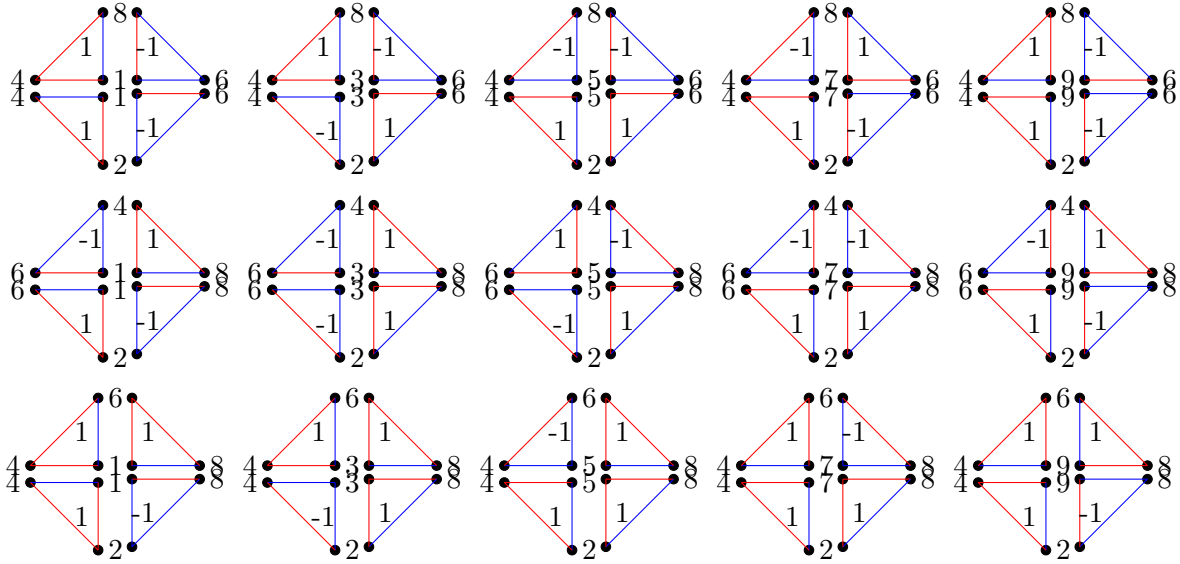}
        \caption{Decomposition of $T((a,b,c,d))$}
        \label{fig:enter-label2}
    \end{figure}
Since $\im \delta_1^T \subset \ker \delta_0^T,$ the above expression belongs to $\ker \delta_0^T.$ Moreover, since the support of this $\ker \delta_0^T$ element is the cycle $(a,b,c,d)$ in the graph, by Proposition \ref{rmk-tc-uniqueness}, we conclude that 
    \begin{gather}
        \delta_1^T(\{1,2,4\} + \{1,4,8\}-\{1,6,8\}-\{1,2,6\}) = \pm T(a,b,c,d).
    \end{gather}
    This concludes the proof.
\end{proof}

\begin{lemma}\label{prop:supp_red}
Let $G$ be a graph. Assume that $C=(a,b,c,d,v_5,\dots,v_{\ell})$ is a cycle of length four or more, and $a,b,c,d$ have a common neighbor $e$. Denote $e_1 = \{a,b\}, e_2 = \{b,c\}, e_3 = \{c,d\}$, and $c_1 = [e_1, T(C)], c_2 = [e_2, T(C)], c_3 = [e_3, T(C)].$ There are $c'_4,c'_5,c'_6,x_1, x_2\in \{-1,1\}$ such that
    \begin{gather}
        c_1e_1 + c_2e_2 + c_3e_3 + \delta_1^T(c'_4 \{a,b,e\} + c'_5 \{b,c,e\} + c'_6 \{c,d,e\}) = x_1 \{a,e\} + x_2 \{d,e\}.
    \end{gather}
\end{lemma}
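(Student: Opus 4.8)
The plan is to choose $c'_4,c'_5,c'_6$ so that $\delta_1^T(c'_4\{a,b,e\}+c'_5\{b,c,e\}+c'_6\{c,d,e\})$ cancels the three terms $c_1e_1,c_2e_2,c_3e_3$ on the cycle edges, and then to show that the leftover terms on the edges $\{b,e\}$ and $\{c,e\}$ cancel automatically. First I would observe that, since $a,b,c,d$ are four distinct vertices of the cycle $C$ and $e$ is adjacent to each of them, we have $e\notin\{a,b,c,d\}$, and $\{a,b,e\},\{b,c,e\},\{c,d,e\}$ are $3$-cliques of $G$, hence elements of $X_2$. Moreover, the seven edges $e_1,e_2,e_3,\{a,e\},\{b,e\},\{c,e\},\{d,e\}$ are pairwise distinct. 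Since $\delta_1^T$ sends the indicator of a triangle to the signed sum of its three edges, the vector $\delta_1^T(c'_4\{a,b,e\}+c'_5\{b,c,e\}+c'_6\{c,d,e\})$ is supported within these seven edges, so adding $c_1e_1+c_2e_2+c_3e_3$ keeps the support inside this set.

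Next I would set
\[
c'_4=-c_1[\{a,b,e\}:\{a,b\}],\qquad c'_5=-c_2[\{b,c,e\}:\{b,c\}],\qquad c'_6=-c_3[\{c,d,e\}:\{c,d\}].
\]
By \eqref{eq:coeffTC} every coefficient of $T(C)$ is $\pm1$, so $c_1,c_2,c_3\in\{-1,1\}$, and since each signing bracket is $\pm1$ we get $c'_4,c'_5,c'_6\in\{-1,1\}$. As brackets square to $1$, the coefficient of $e_i$ in the left-hand side is $c_i-c_i=0$ for $i=1,2,3$. The coefficient of $\{a,e\}$ receives a contribution only from $\delta_1^T(c'_4\{a,b,e\})$ and that of $\{d,e\}$ only from $\delta_1^T(c'_6\{c,d,e\})$, so they equal $x_1:=c'_4[\{a,b,e\}:\{a,e\}]$ and $x_2:=c'_6[\{c,d,e\}:\{d,e\}]$, both in $\{-1,1\}$.

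The crux is that the coefficients of $\{b,e\}$ and $\{c,e\}$ vanish. For $\{b,e\}$, only the triangles $\{a,b,e\}$ and $\{b,c,e\}$ contribute, so its coefficient is $c'_4[\{a,b,e\}:\{b,e\}]+c'_5[\{b,c,e\}:\{b,e\}]$. Substituting the chosen $c'_4,c'_5$ and applying the identity $[F\cup F':F][F\cup F':F']=-[F:F\cap F'][F':F\cap F']$ of Lemma~\ref{lem:epsLdown} with $(F,F')=(\{a,b\},\{b,e\})$ and with $(F,F')=(\{b,c\},\{b,e\})$, this coefficient becomes
\[
[\{b,e\}:b]\bigl(c_1[e_1:b]+c_2[e_2:b]\bigr).
\]
Now $b$ lies on precisely the two cycle edges $e_1$ and $e_2$, so the parenthesized sum is exactly the $b$-coordinate of $\delta_0^T\,T(C)$, which is $0$ since $T(C)\in\ker\delta_0^T$ by Lemma~\ref{lem:TCkerdelta0}. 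The coefficient of $\{c,e\}$ is handled identically, using that $c$ lies on precisely the cycle edges $e_2$ and $e_3$. Hence the left-hand side equals $x_1\{a,e\}+x_2\{d,e\}$, as claimed.

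I expect the main difficulty to be purely organizational: one has to check that the seven edges in play are genuinely distinct (so that no coefficients silently merge) and to track the signing brackets correctly through the substitution. The single conceptual point is recognizing the leftover $\{b,e\}$- and $\{c,e\}$-coefficients as a $\pm1$ multiple of a coordinate of $\delta_0^T\,T(C)$, which vanishes by Lemma~\ref{lem:TCkerdelta0}.
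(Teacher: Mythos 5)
Your proof is correct and follows essentially the same route as the paper's: the same choice of $c'_4,c'_5,c'_6$, the same cancellation on $e_1,e_2,e_3$, and the same use of the identity \eqref{eq:epsFF'} from Lemma \ref{lem:epsLdown} to show the $\{b,e\}$- and $\{c,e\}$-coefficients vanish, with the support and $\pm1$ coefficients of $\{a,e\},\{d,e\}$ handled identically. The only cosmetic difference is that you conclude via the $b$- (resp.\ $c$-) coordinate of $\delta_0^T\,T(C)$ being zero (Lemma \ref{lem:TCkerdelta0}), whereas the paper substitutes the equivalent recursion $c_2=-c_1[e_1:b][e_2:b]$ from Definition \ref{def:TC}.
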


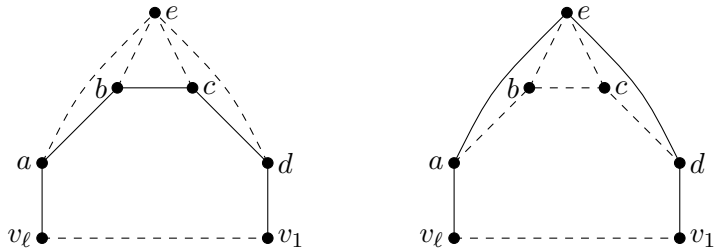
\begin{figure}[b!]
    \centering
    \begin{tikzpicture}
    \draw (0,0) -- (1,1) -- (2,1) -- (3,0);
    \draw (0,0) -- (0,-1);
    \draw (3,0) -- (3,-1);
    \draw[dashed] (0,-1) -- (3,-1);
    \filldraw[black] (0,0) circle (2pt) node[anchor=east]{$a$};
    \filldraw[black] (1,1) circle (2pt) node[anchor=east]{$b$};
    \filldraw[black] (2,1) circle (2pt) node[anchor=west]{$c$};
    \filldraw[black] (3,0) circle (2pt) node[anchor=west]{$d$};
    \filldraw[black] (0,-1) circle (2pt) node[anchor=east]{$v_{\ell}$};
    \filldraw[black] (3,-1) circle (2pt) node[anchor=west]{$v_1$};
    \filldraw[black] (1.5,2) circle (2pt) node[anchor=west]{$e$};
    \draw[dashed] (0,0) .. controls (0.5,1) .. (1.5,2);
    \draw[dashed] (3,0) .. controls (2.5,1) .. (1.5,2);
    \draw[dashed] (1,1) -- (1.5,2);
    \draw[dashed] (2,1) -- (1.5,2);
\end{tikzpicture}
\begin{tikzpicture}
    \draw[white] (1,1) -- (2,1);
\end{tikzpicture}
\begin{tikzpicture}
    \draw[dashed] (0,0) -- (1,1) -- (2,1) -- (3,0);
    \draw (0,0) -- (0,-1);
    \draw (3,0) -- (3,-1);
    \draw[dashed] (0,-1) -- (3,-1);
    \filldraw[black] (0,0) circle (2pt) node[anchor=east]{$a$};
    \filldraw[black] (1,1) circle (2pt) node[anchor=east]{$b$};
    \filldraw[black] (2,1) circle (2pt) node[anchor=west]{$c$};
    \filldraw[black] (3,0) circle (2pt) node[anchor=west]{$d$};
    \filldraw[black] (0,-1) circle (2pt) node[anchor=east]{$v_{\ell}$};
    \filldraw[black] (3,-1) circle (2pt) node[anchor=west]{$v_1$};
    \filldraw[black] (1.5,2) circle (2pt) node[anchor=west]{$e$};
    \draw (0,0) .. controls (0.5,1) .. (1.5,2);
    \draw (3,0) .. controls (2.5,1) .. (1.5,2);
    \draw[dashed] (1,1) -- (1.5,2);
    \draw[dashed] (2,1) -- (1.5,2);
\end{tikzpicture}
    \caption{Replacing three consecutive edges in $T(C)$ with two edges}
    \label{fig:support-reduction}
\end{figure}

\begin{proof}    
We define
\begin{gather}\label{eq:c456}
        c'_4 = -c_1[\{a,b,e\} : \{a,b\}] = -c_1[\{a,b,e\} : e_1],\\
        c'_5 = -c_2[\{b,c,e\}:\{b,c\}] = -c_2[\{b,c,e\}:e_2],\\
        c'_6 = -c_3[\{c,d,e\}:\{c,d\}] = -c_3[\{c,d,e\}:e_3].
\end{gather}
From the definition of $T(C)$, we know that $c_1,c_2,c_3 \in \{-1,1\}$, and consequently, $c'_4,c'_5,c'_6 \in \{-1,1\}$. 
    
For simplicity, denote 
\begin{equation}\label{eq:S}
S=c_1e_1 + c_2e_2 + c_3e_3+ \delta_1^T(c'_4 \{a,b,e\} + c'_5 \{b,c,e\} + c'_6\{c,d,e\}). 
\end{equation}
Recall that $[e_1,S]$ denotes the coefficient of $e_1=\{a,b\}$ in $S$. Using \eqref{eq:c456}, we get that
\begin{align*}
        [e_1, S]&= [e_1,c_1e_1 + c'_4 \delta_1^T(\{a,b,e\})] \\
        &=[e_1,c_1e_1 -c_1[\{a,b,e\}:e_1]\ \cdot\  \delta_1^T(\{a,b,e\})]\\
        &=[e_1,c_1e_1 -c_1[\{a,b,e\}:e_1]\ \cdot \ [e_1, \delta_1^T(\{a,b,e\})]e_1]\\
        &= [e_1,c_1e_1 -c_1[\{a,b,e\}:e_1]\ \cdot \ [\{a,b,e\}:e_1]e_1]\\ 
        &= [e_1, c_1e_1-c_1e_1] = 0.
    \end{align*}
Hence, there are no $e_1$ terms in the expansion of $S$. By similar arguments (the proofs are almost identical to the previous one), one can show that 
    \begin{gather}
        [e_2, S] = 0 \text{ and } [e_3, S] = 0.
    \end{gather}

We will now prove that 
    \begin{gather}
        [\{b,e\}, S] = 0.
    \end{gather}

From \eqref{eq:epsFF'}, if $\{x,y,z\}$ is any triangle in $G$, then
\begin{equation*}
[\{x,y,z\}:\{x,y\}]\cdot [\{x,y,z\}:\{x,z\}]=-[\{x,y\}:\{x\}][\{x,z\}:\{x\}].
\end{equation*}
Since $[\{x,y,z\}:\{x,y\}]\cdot [\{x,y,z\}:\{x,z\}]\cdot [\{x,y,z\}:\{y,z\}]=-1$, we deduce that
\begin{gather}
[\{x,y,z\}:\{y,z\}]=-[\{x,y\}:x] \cdot [\{x,z\}:x] = [\{x,y\}:y] \cdot [\{x,z\}:z]
\end{gather}
  
Using \eqref{eq:S} and the fact that $\{b,e\}$ is not in the triangle $\{c,d,e\}$, we get that
\begin{align*}
    [\{b,e\},S]&=[\{b,e\},\delta_1^T(c'_4\{a,b,e\}+c'_5\{b,c,e\})]=c'_4[\{b,e\},\delta_1^T\{a,b,e\}]+c'_5[\{b,e\},\delta_1^T\{b,c,e\}]\\
    &=c'_4[\{a,b,e\}:\{b,e\}]+c'_5[\{b,c,e\}:\{b,e\}]\\
    &=-c_1[\{a,b,e\} : \{a,b\}][\{a,b,e\}:\{b,e\}]-c_2[\{b,c,e\}:\{b,c\}][\{b,c,e\}:\{b,e\}]\\
    &=c_1[\{a,b,e\}:\{a,e\}]+c_2[\{b,c,e\}:\{c,e\}]\\
    &=c_1[\{a,b,e\}:\{a,e\}]-c_1[\{a,b\}:\{b\}]\cdot [\{b,c\}:\{b\}]\cdot [\{b,c,e\}:\{c,e\}]\\
    &=c_1[\{a,b,e\}:\{a,e\}]-c_1[\{a,b\}:\{b\}]\cdot [\{b,c\}:\{b\}]\cdot [\{b,c\}:\{b\}]\cdot [\{b,e\}:\{b\}]\\
    &=c_1[\{a,b,e\}:\{a,e\}]-c_1[\{a,b\}:\{b\}][\{b,e\}:\{b\}]\\
    &=c_1[\{a,b,e\}:\{a,e\}]-c_1[\{a,b,e\}:\{a,e\}]\\
    &=0.
\end{align*}

One can show that
\begin{equation}
[\{c,e\}, S] = [\{c,e\}, \delta_1^T(c'_5\{b,c,e\} + c'_6\{c,d,e\}] = 0
\end{equation}
by repeating the above argument with the triangles $\{a,b,e\}$ and $\{b,c,e\}$ replaced by the triangles $\{b,c,e\}$ and $\{c,d,e\}$ respectively. 

We conclude the proof by observing the following:
\begin{enumerate}
        \item the support of $S$ cannot contain any edge outside those of the triangles $\{a,b,e\}, \{b,c,e\},$ and $\{c,d,e\}$,
        \item the terms $\{a,b\},\{b,c\}, \{c,d\}, \{b,e\}, \{c,e\}$ vanish in $S$, and,
        \item the terms $\{a,e\}$ and $\{d,e\}$ appear with coefficients $c_4[\{a,b,e\}: \{a,e\}],$ and $c_6[\{c,d,e\}:\{d,e\}],$ each of which is in $\{-1,1\}$.
    \end{enumerate} 

\end{proof}

We are now ready to present the main theorem of the section.

\begin{lemma} \label{prop:cycle-cut}
Let $C=(v_1,v_2,\dots,v_{\ell})$ be an ordered tuple of vertices of a cycle in $G$ with ${\ell} \geq 4$ and $v_1$ is adjacent to $v_i$ for some $3 \leq i \leq {\ell}-1$. Then
\begin{gather}
        T(C) = T(C_1) + [\{v_i, v_{i+1}\}, T(C)]\ T(C_2), 
\end{gather}
where $C_1=(v_1,v_2,\dots,v_i)$ and $C_2=(v_i,v_{i+1},\dots,v_{\ell},v_1).$
\end{lemma}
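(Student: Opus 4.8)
The plan is to recognize the right-hand side as an element of $\ker\delta_0^T$ whose support is exactly the edge set of $C$, and then to identify it with $T(C)$ via Proposition \ref{prop:TC}. Throughout write $e_k=\{v_k,v_{k+1}\}$ for the edges of $C$ (indices mod $\ell$), $g=\{v_1,v_i\}$ for the chord, and $\lambda=[\{v_i,v_{i+1}\},T(C)]$; by Definition \ref{def:TC}, $\lambda$ is the recursively defined coefficient $c_i$ of $e_i$ in $T(C)$, so $\lambda\in\{-1,1\}$, and in particular $\lambda\neq0$. By Lemma \ref{lem:TCkerdelta0}, each of $T(C)$, $T(C_1)$, $T(C_2)$ lies in $\ker\delta_0^T$, hence so does $S:=T(C_1)+\lambda\,T(C_2)$; moreover $\operatorname{supp}T(C_1)\subseteq\{e_1,\dots,e_{i-1},g\}$ and $\operatorname{supp}T(C_2)\subseteq\{e_i,\dots,e_\ell,g\}$, so $\operatorname{supp}S\subseteq\{e_1,\dots,e_\ell,g\}$.

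The heart of the argument is to show that the coefficient of the chord $g$ in $S$ is $0$. I would isolate three sign identities, each an instance of the vertex consistency underlying Lemma \ref{lem:TCkerdelta0} (that the entry of $\delta_0^T\,T(C')$ at a vertex lying on exactly two edges of a cycle $C'$ vanishes). First, the recursion \eqref{eq:coeffTC} for $T(C)$ at step $i$ reads $c_i=-c_{i-1}[e_{i-1}:v_i][e_i:v_i]$. Second, the first $i-1$ coefficients of $T(C_1)$ obey the same recursion as those of $T(C)$, hence equal $c_1,\dots,c_{i-1}$, and since the $i$-th (closing) edge of $C_1$ is $g$, the coefficient of $g$ in $T(C_1)$ equals $-c_{i-1}[e_{i-1}:v_i][g:v_i]$. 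Third, in $C_2$ the vertex $v_i$ lies on exactly the two edges $e_i$ and $g$, so the vanishing of the $v_i$-entry of $\delta_0^T\,T(C_2)$, together with the fact that the leading coefficient of $T(C_2)$ is $1$, forces the coefficient of $g$ in $T(C_2)$ to be $-[e_i:v_i][g:v_i]$. Combining these, substituting the first identity and using $[e_i:v_i]^2=1$, the coefficient of $g$ in $S$ is $-c_{i-1}[e_{i-1}:v_i][g:v_i]-\lambda[e_i:v_i][g:v_i]=0$.

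It then remains to observe that $\operatorname{supp}S=\{e_1,\dots,e_\ell\}$ exactly: for $1\le k\le i-1$ the $e_k$-coefficient of $S$ equals that of $T(C_1)$, which is $\pm1$; for $i\le k\le\ell$ it equals $\lambda$ times the $e_k$-coefficient of $T(C_2)$, hence is again $\pm1$; and the $g$-coefficient is $0$. Since $\{e_1,\dots,e_\ell\}$ is the cycle $C$ in $G$, Proposition \ref{prop:TC} gives $S=c\cdot T(C)$ for some non-zero $c$, and comparing the coefficients of $e_1$ — which is $1$ in $S$ (note $e_1\notin\operatorname{supp}T(C_2)$ because $i\ge3$) and $c$ in $c\,T(C)$ — yields $c=1$, which is the claimed identity. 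The only genuinely delicate part is the sign bookkeeping in the second and third identities: one must transcribe Definition \ref{def:TC} for the subcycles $C_1$ and $C_2$ carefully and track precisely which incidence signs $[e_{i-1}:v_i]$, $[e_i:v_i]$, $[g:v_i]$ occur; once those relations are secured, the rest is formal.
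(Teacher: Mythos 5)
Your proposal is correct and follows essentially the same route as the paper's proof: both compute the chord coefficient of $T(C_1)+[\{v_i,v_{i+1}\},T(C)]\,T(C_2)$ using the recursion \eqref{eq:coeffTC} (equivalently the vanishing of the $v_i$-entry under $\delta_0^T$) together with the agreement of the first $i-1$ coefficients of $T(C_1)$ with those of $T(C)$, show it cancels, and then identify the resulting element of $\ker\delta_0^T$ supported on the edges of $C$ with $T(C)$ via Proposition \ref{prop:TC}, normalizing by the coefficient of $\{v_1,v_2\}$. The sign bookkeeping in your three identities checks out, so no gap remains.
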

\begin{proof}
% To see this, observe that the signs of the edges $\{v_1,v_2\}$ and $\{v_i,v_{i+1}\}$ are the same on both sides by construction, this implies equality of signs of all the edges in $C$. We need to verify that the term $\{v_1,v_i\}$ cancels out in the right hand side. 

%     Let $c_{i-1} = [\{v_{i-1}, v_i\}, T(C)]$. Then $c_{i-1}=[\{v_{i-1}, v_i\}, T(C_1)]$ also. 
    
%     Consequently, 
%     \begin{gather}
%         [\{v_1,v_i\}, T(C_1)] = -c_{i-1}[\{v_{i-1}:v_i\}, v_i]\ [\{v_{1},v_i\} : v_i].
%     \end{gather}
    
%     On the other hand,
%     \begin{gather}
%         [\{v_{1},v_{i}\}, T(C_2)] = -[\{v_{i},v_{i+1}\}, T(C_2)]\ [\{v_i,v_{i+1}\}:v_i]\ [\{v_1,v_{i}\}:v_i]\\
%         = -[\{v_{i},v_{i+1}\}, T(C)]\ [\{v_i,v_{i+1}\}:v_i]\ [\{v_1,v_{i}\}:v_i]\\
%         = c[\{v_i,v_{i+1}\}:v_i]\ [\{v_{i-1},v_{i}\}:v_i] \ [\{v_i,v_{i+1}\}:v_i]\ [\{v_1,v_{i}\}:v_i]\\
%         =  c[\{v_{i-1},v_i\}: v_i]\ [\{v_{1},v_i\}: v_i],
%     \end{gather}
%     therefore,
%     \begin{gather}
%         [\{v_1,v_i\}, T(C_1)]=-[\{v_1,v_i\},  T(C_2)],
%     \end{gather}
%     which completes the proof.

% {\bf Sebi's proof}

Denote 
\begin{align*}
    T(C)&=c_1 \{v_1,v_2\}+c_2\{v_2,v_3\}+\ldots +c_{i-1}\{v_{i-1},v_{i}\}+c_i\{v_i,v_{i+1}\}+\ldots +c_{\ell-1}\{v_{\ell-1},v_{\ell}\}+c_{\ell}\{v_{\ell},v_1\},\\
    T(C_1)&=c'_1\{v_1,v_2\}+c'_2\{v_2,v_3\}+\ldots +c'_{i-1}\{v_{i-1},v_{i}\}+c'_i\{v_i,v_1\},\\
    T(C_2)&=c''_i\{v_i,v_{i+1}\}+\ldots +c''_{\ell-1}\{v_{\ell-1},v_{\ell}\}+c''_{\ell}\{v_{\ell},v_1\}+c''_1\{v_1,v_i\}.
\end{align*}
where $c_1=c'_1=c''_i=1$ and 
\begin{align*}
    c_j&=-c_{j-1}[\{v_{j-1},v_j\}:v_j][\{v_j,v_{j+1}\}:v_j],  \forall j\in \{2,\ldots,\ell\},\\
    c'_j&=-c'_{j-1}[\{v_{j-1},v_j\}:v_j][\{v_j,v_{j+1}\}:v_j], \forall j\in \{2,\ldots,i\},\\
    c''_j&=-c''_{j-1}[\{v_{j-1},v_j\}:v_j][\{v_j,v_{j+1}\}:v_j], \forall j\in \{i+1,\ldots,\ell\}.
\end{align*}
Therefore, using induction on $j$, one can show that $c_j=c'_j$, for any $j\in \{1,\ldots,i-1\}$ and $c_j=c_i\cdot c''_j$, for any $j\in \{i,\ldots,\ell-1\}$.

% Hence,
% \begin{align*}
%     T(C)-T(C_1)&=c_i\{v_i,v_{i+1}\}+\ldots +c_{\ell-1}\{v_{\ell-1},v_{\ell}\}+c_{\ell}\{v_{\ell},v_1\}-c'_i\{v_i,v_1\}\\
%     &=c_i\cdot c''_i\{v_i,v_{i+1}\}+\ldots +c_i\cdot c''_{\ell-1}\{v_{\ell-1},v_{\ell}\}+c_{\ell}\{v_{\ell},v_1\}-c'_1\{v_i,v_1\}\\
%     &=c_i\left[c''_i\{v_i,v_{i+1}\}+\ldots +c''_{\ell-1}\{v_{\ell-1},v_{\ell}\}\right]+c_{\ell}\{v_{\ell},v_1\}-c'_1\{v_i,v_1\}\\
%     &=
% \end{align*}

% {\bf Completed by Mutasim.}

We observe that $T(C) \in \ker \delta_0^T,$ and so,
\begin{align*}
    0 &= [v_i, \delta_0^T(T(C))] = [v_i, \delta_0^T(c_{i-1}\{v_{i-1},v_i\} + c_i\{v_i,v_{i+1}\})]\\
    &=  [v_i, \delta_0^T(c_{i-1}\{v_{i-1},v_i\})] + [v_i, \delta_0^T( c_i\{v_i,v_{i+1}\})]\\
    &=c_{i-1}[\{v_{i-1},v_i\}:v_i] + c_i[\{v_i,v_{i+1}\}:v_i].
\end{align*}
That is, 
\begin{gather*}
    c_{i-1}[\{v_{i-1},v_i\}:v_i] =- c_i[\{v_i,v_{i+1}\}:v_i], \text{ or,}\\
    c_{i-1}c_i[\{v_{i-1},v_i\}:v_i][\{v_i,v_{i+1}\}:v_i]= -1,
\end{gather*}
since each of the terms is in $\{-1,1\}.$ Similarly, since $T(C_2) \in \ker \delta_0^T$ and the only  two edges in $C_2$ incident with the vertex $v_i$ are $\{v_1,v_i\}$ and $\{v_i,v_{i+1}\},$ we can show that:
\begin{gather*}
    0 = [v_i, T(C_2)] = c_{i}''[\{v_i,v_{i+1}\}:v_i] + c_1''[\{v_1,v_i\}:v_i].
\end{gather*}
Since each of the two summands is in $\{-1,1\},$ we have
\begin{gather*}
    c_{i}''[\{v_i,v_{i+1}\}:v_i] = -c_1''[\{v_1,v_i\}:v_i], \text{ or,}\\
    c_1'' = -c_{i}''[\{v_1,v_i\}:v_i]\ [\{v_i,v_{i+1}\}:v_i].
\end{gather*}
The coefficient of the edge $\{v_1,v_i\}$ in the expansion of $T(C_1)+c_iT(C_2)$ equals $c_i'+c_ic_1''$, where terms $c_i', c_ic_1'' \in \{-1,1\}$. We have that
\begin{align*}
    c_i'(c_ic_1'')&= \big(-c_{i-1}'[\{v_1,v_i\}:v_i] \ [\{v_{i-1},v_i\}:v_i]\big) \cdot \big( -c_ic_i''[\{v_1,v_i\}:v_i]\ [\{v_i,v_{i+1}\}:v_i]\big)\\
    &= \big(-c_{i-1}[\{v_1,v_i\}:v_i] \ [\{v_{i-1},v_i\}:v_i]\big) \cdot \big( -c_i[\{v_1,v_i\}:v_i]\ [\{v_i,v_{i+1}\}:v_i]\big)\\
    &=c_{i-1}c_i[\{v_{i-1},v_i\}, v_i] \ [\{v_i, v_{i+1}\}:v_i] = -1.
\end{align*}
That is, $c_i'$ and $c_ic_1''$ have opposite signs, and therefore $c_i'+c_ic_1''=0.$ This implies that term $\{v_1,v_{i}\}$ vanishes in the expansion of $T(C_1)+c_iT(C_2).$ We conclude that the support of the $\ker \delta_0^T$ element $T(C_1)+c_iT(C_2)$ forms the cycle $C$. By Prop \ref{prop:TC}, there is some nonzero constant $c$ such such that $T(C) = c(T(C_1)+c_iT(C_2)).$ Since in both the expressions $T(C)$ and $T(C_1)+c_iT(C_2)$ the coefficient of the edge $\{v_1,v_2\}$ is $1$, we conclude that $c=1,$ or, $T(C) = T(C_1)+c_iT(C_2).$ 
\end{proof}

\begin{corollary}\label{prop:two-triagles}
    If the four vertices $(a,b,c,d)$ form a cycle in $G$ such that either $a \sim c$ or $b \sim d$, then, $T(a,b,c,d) \in \im \delta_1^T.$ 
\end{corollary}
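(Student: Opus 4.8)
The plan is to derive this as an immediate consequence of Lemma~\ref{prop:cycle-cut} together with the observation that the $T$-vector of a triangle is a coboundary. First I would dispose of the symmetry between the two hypotheses: the ordered tuples $(a,b,c,d)$ and $(b,c,d,a)$ describe the same cycle in $G$, so by Proposition~\ref{rmk-tc-uniqueness} the vectors $T(a,b,c,d)$ and $T(b,c,d,a)$ differ by a sign, and one lies in $\im\delta_1^T$ if and only if the other does. Hence it suffices to handle the case $a\sim c$; the case $b\sim d$ follows by running the identical argument on the rotated tuple $(b,c,d,a)$.

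Assuming $a\sim c$, I would apply Lemma~\ref{prop:cycle-cut} with $\ell=4$, $v_1=a,\ v_2=b,\ v_3=c,\ v_4=d$, and $i=3$ (this is the unique admissible index since we need $3\le i\le\ell-1=3$, and $v_1=a$ is indeed adjacent to $v_3=c$). The lemma yields
\[
T(a,b,c,d) = T(a,b,c) + \bigl[\{c,d\},\,T(a,b,c,d)\bigr]\, T(c,d,a),
\]
where now $C_1=(a,b,c)$ and $C_2=(c,d,a)$ are both $3$-cycles, i.e.\ triangles of $G$ (each is a triangle precisely because $a\sim c$ together with the cycle edges).

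It then remains only to note that for any triangle $\{x,y,z\}$ of $G$ one has $T(x,y,z)\in\im\delta_1^T$. Indeed, by Lemma~\ref{lem:basics}(iii) we have $\delta_0^T\delta_1^T=0$, so $\delta_1^T(\{x,y,z\})\in\ker\delta_0^T$; its three entries on the edges $\{x,y\},\{y,z\},\{x,z\}$ are each $\pm1$, so its support is exactly the $3$-cycle on $x,y,z$, and Proposition~\ref{prop:TC} forces $\delta_1^T(\{x,y,z\})=c\,T(x,y,z)$ for some nonzero scalar $c$. Applying this to $T(a,b,c)$ and $T(c,d,a)$ shows both lie in the subspace $\im\delta_1^T$, hence so does their linear combination $T(a,b,c,d)$. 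There is no real obstacle here: essentially all of the work has been pushed into Lemma~\ref{prop:cycle-cut}, and the only points requiring any care are the harmless sign/rotation bookkeeping of Proposition~\ref{rmk-tc-uniqueness} and the elementary remark that a triangle's $T$-vector is a coboundary.
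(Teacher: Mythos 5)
Your proof is correct and follows exactly the route the paper intends: the corollary is stated as an immediate consequence of Lemma \ref{prop:cycle-cut} applied with $i=3$ (after rotating via Proposition \ref{rmk-tc-uniqueness} in the $b\sim d$ case), combined with the observation—used in the paper's base case $\ell=3$ of Theorem \ref{prop:cycle-decomp}—that $\delta_1^T(\{x,y,z\})$ is a nonzero multiple of $T(x,y,z)$ by Proposition \ref{prop:TC}. No gaps; the application of the lemma, the sign bookkeeping, and the triangle coboundary argument are all handled as the paper does.
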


\begin{corollary}\label{cor:TCinduced}
The elements of the set 
\begin{equation*}
    \{T(C): C=(v_1,\dots,v_{\ell}) \text{ a ordered tuple of vertices in } G \text{ such that } v_1,\dots,v_{\ell} \text{ is an induced cycle} \}
\end{equation*}   
span $\ker \delta_0^T$.
\end{corollary}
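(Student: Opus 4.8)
The plan is to combine Proposition~\ref{prop:spanTC}, which already tells us that the vectors $T(C)$ ranging over \emph{all} cycles $C$ span $\ker\delta_0^T$, with an induction on cycle length that rewrites each $T(C)$ as an integer combination of vectors $T(C')$ attached to induced cycles $C'$. The engine of the induction is Lemma~\ref{prop:cycle-cut}: whenever a cycle possesses a chord, that lemma splits its $T$-vector into the $T$-vectors of two strictly shorter cycles.

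More precisely, I would prove by strong induction on $\ell\geq 3$ the following claim: for every ordered tuple $C=(v_1,\dots,v_\ell)$ whose vertices form a cycle in $G$, the vector $T(C)$ lies in the span of $\{T(C'): C'\text{ an induced cycle}\}$. The base case $\ell=3$ is immediate, since a triangle has no chord and is therefore an induced cycle. For the inductive step take $\ell\geq 4$. If $C$ is already an induced cycle there is nothing to do. Otherwise $C$ has a chord, i.e.\ $v_a\sim v_b$ for a pair of indices that are not cyclically consecutive. By Proposition~\ref{rmk-tc-uniqueness} a cyclic rotation of $C$ changes $T(C)$ only by a sign, so after rotating we may assume $a=1$ and $b=i$ with $3\leq i\leq\ell-1$. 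Lemma~\ref{prop:cycle-cut} then gives
\[
T(C) = T(C_1) + [\{v_i,v_{i+1}\},T(C)]\, T(C_2),
\]
where $C_1=(v_1,\dots,v_i)$ and $C_2=(v_i,v_{i+1},\dots,v_\ell,v_1)$ are cycles of lengths $i$ and $\ell-i+2$ respectively. Because $3\leq i\leq\ell-1$, both lengths lie between $3$ and $\ell-1$, so the induction hypothesis applies to $C_1$ and $C_2$; hence $T(C)$ is a combination of $T$-vectors of induced cycles as well.

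Finally, stringing things together: by Proposition~\ref{prop:spanTC} the set $\{T(C): C\text{ a cycle}\}$ spans $\ker\delta_0^T$, and by the claim just proved each such $T(C)$ lies in the span of $\{T(C'): C'\text{ an induced cycle}\}$; therefore the latter set already spans $\ker\delta_0^T$, which is the assertion. I do not expect a genuine obstacle here, since the substantive work has been front-loaded into Lemma~\ref{prop:cycle-cut} and Proposition~\ref{prop:spanTC}. The only points needing a little care are checking that the two cycles $C_1,C_2$ produced by a chord are genuinely shorter than $C$ (which is exactly what the range $3\leq i\leq\ell-1$ guarantees, and is also the hypothesis of Lemma~\ref{prop:cycle-cut}), and using Proposition~\ref{rmk-tc-uniqueness} to normalize the chord so that one of its endpoints is $v_1$, putting us into the exact form in which Lemma~\ref{prop:cycle-cut} is stated.
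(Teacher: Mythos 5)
Your argument is correct and is exactly the route the paper intends: the corollary is stated without proof as an immediate consequence of Proposition \ref{prop:spanTC} together with Lemma \ref{prop:cycle-cut}, and your strong induction on cycle length (rotating via Proposition \ref{rmk-tc-uniqueness} so a chord runs from $v_1$ to some $v_i$ with $3\leq i\leq \ell-1$, then splitting into two strictly shorter cycles) fills in that implicit argument faithfully, including the needed check that both pieces have length between $3$ and $\ell-1$.
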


\begin{theorem}\label{prop:cycle-decomp}
If $G$ is a graph with the property that any induced cycle (of length at least four) has four consecutive vertices that have a common neighbor, then the clique complex of $G$ satisfies $\im \delta_0 = \ker \delta_1$. 
\end{theorem}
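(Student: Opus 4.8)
The plan is to dualize. Since $\delta_1\delta_0=0$ and $\delta_0^T\delta_1^T=0$ by Lemma~\ref{lem:basics}, we always have $\im\delta_0\subseteq\ker\delta_1$ and $\im\delta_1^T\subseteq\ker\delta_0^T$ inside $\re^{X_1}$. Moreover $\re^{X_1}=\im\delta_0\oplus\ker\delta_0^T=\im\delta_1^T\oplus\ker\delta_1$ are orthogonal direct sums, so $\im\delta_0=\ker\delta_1$ holds if and only if $\ker\delta_0^T=\im\delta_1^T$, and for this it suffices to prove the inclusion $\ker\delta_0^T\subseteq\im\delta_1^T$. By Corollary~\ref{cor:TCinduced}, $\ker\delta_0^T$ is spanned by the vectors $T(C)$ with $C$ an induced cycle, so it is enough to show $T(C)\in\im\delta_1^T$ for every induced cycle $C$.

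I will prove the slightly stronger statement that $T(C)\in\im\delta_1^T$ for \emph{every} cycle $C$ of $G$, by strong induction on the length $\ell$ of $C$. For $\ell=3$, $C$ is a triangle $\{x,y,z\}$ and $\delta_1^T(\{x,y,z\})\in\ker\delta_0^T$ is supported on exactly the three edges of the triangle, so by Proposition~\ref{prop:TC} it is a nonzero scalar multiple of $T(C)$, whence $T(C)\in\im\delta_1^T$. For the inductive step, fix $\ell\geq4$ and assume the claim for all shorter cycles. If $C$ has a chord, then after a cyclic relabeling (which changes $T(C)$ by at most a sign, by Proposition~\ref{rmk-tc-uniqueness}) the chord may be taken to be $\{v_1,v_i\}$ with $3\leq i\leq\ell-1$; Lemma~\ref{prop:cycle-cut} then gives $T(C)=T(C_1)+[\{v_i,v_{i+1}\},T(C)]\,T(C_2)$ with $C_1=(v_1,\dots,v_i)$ and $C_2=(v_i,\dots,v_\ell,v_1)$ both cycles of length in $\{3,\dots,\ell-1\}$, so $T(C_1),T(C_2)\in\im\delta_1^T$ by induction and hence $T(C)\in\im\delta_1^T$.

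It remains to treat an induced cycle $C$ of length $\ell\geq4$. By the hypothesis on $G$, some four consecutive vertices of $C$ have a common neighbor $e$; after a cyclic relabeling we may assume these are $v_1,v_2,v_3,v_4=:a,b,c,d$, so $C=(a,b,c,d,v_5,\dots,v_\ell)$ and $e$ is adjacent to $a,b,c,d$. Note $e\notin V(C)$: in an induced cycle of length $\ge 4$ the only neighbors of $b$ among the cycle vertices are $a,c$ and the only neighbors of $c$ are $b,d$, so no cycle vertex is adjacent to both $b$ and $c$. Now apply Lemma~\ref{prop:supp_red}: with $c_j=[e_j,T(C)]$ there are signs $c_4',c_5',c_6'$ and $x_1,x_2$ with
\[
c_1\{a,b\}+c_2\{b,c\}+c_3\{c,d\}+\delta_1^T\big(c_4'\{a,b,e\}+c_5'\{b,c,e\}+c_6'\{c,d,e\}\big)=x_1\{a,e\}+x_2\{d,e\}.
\]
Writing $T(C)=c_1\{a,b\}+c_2\{b,c\}+c_3\{c,d\}+R$, where $R$ is the part of $T(C)$ supported on $\{d,v_5\},\{v_5,v_6\},\dots,\{v_{\ell-1},v_\ell\},\{v_\ell,a\}$ with all coefficients $\pm1$, adding $R$ to both sides yields
\[
T(C)+\delta_1^T\big(c_4'\{a,b,e\}+c_5'\{b,c,e\}+c_6'\{c,d,e\}\big)=x_1\{a,e\}+x_2\{d,e\}+R.
\]
The right-hand side lies in $\ker\delta_0^T$ and, since $e\notin V(C)$ and every coefficient is nonzero, is supported exactly on the edge set of the cycle $C'=(a,e,d,v_5,\dots,v_\ell)$ of length $\ell-1$; by Proposition~\ref{prop:TC} it is a nonzero multiple of $T(C')$, which is in $\im\delta_1^T$ by the induction hypothesis. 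Hence $T(C)\in\im\delta_1^T$, completing the induction. (The case $\ell=4$ may alternatively be handled directly by Lemma~\ref{prop:wheel4}.) Combining with Corollary~\ref{cor:TCinduced} gives $\ker\delta_0^T\subseteq\im\delta_1^T$, and therefore $\im\delta_0=\ker\delta_1$.

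The main obstacle is the induced-cycle step: one must be certain that the ``support reduction'' strictly shortens the cycle and does not accidentally cancel or coalesce edges, so that Proposition~\ref{prop:TC} can be invoked to identify the reduced vector with $\pm T(C')$. Lemma~\ref{prop:supp_red} carries out the delicate incidence-sign computation that forces the left-hand side to collapse to the two edges $\{a,e\},\{d,e\}$, and the observation $e\notin V(C)$ (which is exactly where induced-ness is used) guarantees these two edges are genuinely new; after that the induction closes routinely.
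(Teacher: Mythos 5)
Your proof is correct and takes essentially the same route as the paper: reduce to $\ker\delta_0^T=\im\delta_1^T$, use the spanning set of vectors $T(C)$, and induct on cycle length, collapsing an induced cycle to a shorter one via Lemma \ref{prop:supp_red} and identifying the result with $\pm T(C')$ through Proposition \ref{prop:TC}. Your version is in fact slightly tighter in one respect: by proving the claim for \emph{all} cycles and disposing of chorded cycles with Lemma \ref{prop:cycle-cut}, your induction hypothesis matches exactly what the inductive step uses, whereas the paper's step applies the hypothesis to a possibly non-induced cycle $C'$ while the statement being inducted on is phrased only for induced cycles.
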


\begin{proof}
Because $\frac{\ker \delta_1}{\im \delta_0} \cong \frac{\ker \delta_0^T}{\im \delta_1^T}$ (see \cite[Theorem 5.3]{Lim}), it is enough to prove that $\ker \delta_0^T = \im \delta_1^T$. By Corollary \ref{cor:TCinduced}, it suffices to show that if $C=(v_1,\dots,v_{\ell})$ is an induced cycle in $G$, then $T(C) \in \im \delta_1^T$. We prove this statement by strong induction on $\ell$. We first prove the base cases ${\ell}=3,4,5$ separately.

If $\ell =3$, then the support of $\delta_1^T (\{v_1,v_2,v_3\})\in \ker \delta_0^T$ consists of the edges of the cycle $C$. By Prop \ref{prop:TC}, there is some $c\in \{-1,1\}$ such that $\delta_1^T(\{v_1,v_2,v_3\}) = cT(C)$ which implies that $T(C)\in \im\delta_1^T$.

If $\ell=4$, then by using the hypothesis and Prop \ref{prop:wheel4}, we deduce that $T(C) \in \im \delta_1^T$. 

Let $\ell\geq 5$ and assume that $T(C') \in \im \delta_1^T$ for any cycle $C'$ of length at most $\ell-1$. Consider an induced cycle $C=(v_1,\dots,v_{\ell})$ of length $\ell$. By the hypothesis and Prop \ref{prop:supp_red}, there is $S \in \im \delta_1^T$ such that the $T(C)+S=T(C')$ or $T(C)+S=-T(C')$, where $C'$ is a cycle in $G$ with length $\ell-1$. By the inductive hypothesis, $T(C')\in \im \delta_1^T$ and $-T(C')\in \im \delta_1^T$, and consequently, $T(C) \in \im \delta_1^T$.
\end{proof}
The following result is corollary of Theorem \ref{prop:cycle-decomp} and was previously obtained by Meshulam \cite{Meshulam} (see also the case $k=1$ in Kahle \cite[Thm 3.1]{Kahle}).

\begin{corollary}[Meshulam \cite{Meshulam}] \label{cor:Meshulam}
Let $G$ be a graph with the property that any four vertices of $G$ have a common neighbor. Then the clique complex of $G$ satisfies that $\im \delta_0 = \ker \delta_1.$
\end{corollary}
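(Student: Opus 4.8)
The plan is to deduce this statement directly from Theorem~\ref{prop:cycle-decomp}, since the hypothesis of the corollary is a special case of the hypothesis of that theorem. First I would observe that any induced cycle $C = (v_1, v_2, \dots, v_\ell)$ of $G$ with $\ell \geq 4$ consists of at least four distinct vertices, so in particular $v_1, v_2, v_3, v_4$ are four distinct vertices of $G$. By the assumption that any four vertices of $G$ have a common neighbor, there is a vertex $e \in V(G)$ adjacent to all of $v_1, v_2, v_3, v_4$; that is, the four consecutive vertices $v_1, v_2, v_3, v_4$ of the induced cycle $C$ have a common neighbor. Hence $G$ satisfies the hypothesis of Theorem~\ref{prop:cycle-decomp}.

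Second, I would simply invoke Theorem~\ref{prop:cycle-decomp} to conclude that the clique complex of $G$ satisfies $\im \delta_0 = \ker \delta_1$, which is precisely the claim. Equivalently, this says that $H^1 = 0$ for the clique complex of $G$, thereby recovering Meshulam's result.

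I do not anticipate any real obstacle in this argument; the whole content is the observation that "every four vertices have a common neighbor" trivially implies "every four consecutive vertices on any induced cycle have a common neighbor." The only point worth a brief check is that the common neighbor $e$ supplied by the hypothesis is allowed to be an arbitrary vertex of $G$, even one lying on the cycle $C$ itself; but Theorem~\ref{prop:cycle-decomp} imposes no restriction on where the common neighbor of the four consecutive vertices lies, so this causes no difficulty. (In fact, since $v_1$ is adjacent only to $v_2$ and $v_\ell$ in an induced cycle, the vertex $e$ cannot coincide with any of $v_1, v_2, v_3, v_4$, but this observation is not needed for the argument.)
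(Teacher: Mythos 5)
Your argument is correct and is exactly the route the paper takes: Corollary~\ref{cor:Meshulam} is stated there as an immediate consequence of Theorem~\ref{prop:cycle-decomp}, obtained by noting that four consecutive vertices of any induced cycle are in particular four vertices of $G$ and hence have a common neighbor. Your parenthetical check that the common neighbor may lie anywhere (and in fact cannot lie on the induced cycle) is a fine, if unnecessary, extra observation.
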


For $n,t\in \N$, the Kneser graph $K(n,t)$ is the graph whose vertices are the $t$-subsets of $\{1,\ldots,n\}$, where two $t$-subsets are adjacent if and only if they are disjoint.

% KNESER GRAPH K(8,2) EXAMPLE

The Kneser graphs $K(8,2)$ and $K(9,2)$ satisfy the hypothesis of Theorem \ref{prop:cycle-decomp}, and hence has a trivial first cohomology group, even though it does not satisfy the condition in Corollary \ref{cor:Meshulam}. We show the first statement for $K(8,2)$ below, and the proof for $K(9,2)$ proceeds similarly. The induced $4,5,$ and $6$ cycles in $K(8,2)$ have the form of the first three graphs in Figure \ref{fig:k-8-2-cycles}, up-to a permutation of the symbols. For each of these cycles, the vertex $\{7,8\}$ is a common neighbor of all the vertices appearing in the respective cycle. Moreover, an induced path of length $6$ in $K(8,2)$ have the form of the right-most graph in Figure \ref{fig:k-8-2-cycles}, up-to relabeling of the symbols $[n].$ This shows that there are no induced paths of length $6,$ and so, no induced cycle of length more than $6$ exists. We conclude that $K(8,2)$ satisfies the hypothesis of Theorem \ref{prop:cycle-decomp}. Finally, the four vertices $\{1,2\}, \{3,4\}, \{5,6\}, \{7,8\}$ have no common neighbor, which shows that $K(8,2)$ does not satisfy the condition in Corollary \ref{cor:Meshulam}. 
\begin{figure}[htbp]
    \centering
    \scalebox{0.7}{
    \begin{tikzpicture}
        \filldraw[black] (1,1) circle (2pt) node[anchor=south]{$\{1,2\}$};
        \filldraw[black] (0,0) circle (2pt) node[anchor=east]{$\{3,4\}$};
        \filldraw[black] (2,0) circle (2pt) node[anchor=west]{$\{3,5\}$};
        \filldraw[black] (1,-1) circle (2pt) node[anchor=north]{$\{1,6\}$};
        \draw[black] (1,1) -- (0,0) -- (1,-1) -- (2,0) -- (1,1);
    \end{tikzpicture}
    \hspace{10pt}
    \begin{tikzpicture}
        \filldraw[black] (1,1) circle (2pt) node[anchor=south]{$\{1,2\}$};
        \filldraw[black] (0,0) circle (2pt) node[anchor=east]{$\{3,4\}$};
        \filldraw[black] (2,0) circle (2pt) node[anchor=west]{$\{3,5\}$};
        \filldraw[black] (0,-1) circle (2pt) node[anchor=east]{$\{1,5\}$};
        \filldraw[black] (2,-1) circle (2pt) node[anchor=west]{$\{4,2\}$};
        \draw[black] (1,1)--(0,0)--(0,-1)--(2,-1)--(2,0)--(1,1);
    \end{tikzpicture}
    \hspace{10pt}
    \begin{tikzpicture}
        \filldraw[black] (1,1) circle (2pt) node[anchor=south]{$\{1,2\}$};
        \filldraw[black] (0,0) circle (2pt) node[anchor=east]{$\{3,4\}$};
        \filldraw[black] (2,0) circle (2pt) node[anchor=west]{$\{3,5\}$};
        \filldraw[black] (0,-1) circle (2pt) node[anchor=east]{$\{1,5\}$};
        \filldraw[black] (0,-2) circle (2pt) node[anchor=east]{$\{3,2\}$};
        \filldraw[black] (2,-1) circle (2pt) node[anchor=west]{$\{4,1\}$};
        \draw[black] (1,1)--(0,0)--(0,-1)--(0,-2)--(2,-1)--(2,0)--(1,1);
    \end{tikzpicture}
    \begin{tikzpicture}
        \filldraw[black] (1,1) circle (2pt) node[anchor=south]{$\{1,2\}$};
        \filldraw[black] (0,0) circle (2pt) node[anchor=east]{$\{3,4\}$};
        \filldraw[black] (2,0) circle (2pt) node[anchor=west]{$\{3,5\}$};
        \filldraw[black] (0,-1) circle (2pt) node[anchor=east]{$\{1,5\}$};
        \filldraw[black] (2,-1) circle (2pt) node[anchor=west]{$\{4,1\}$};
        \draw[black] (2,-1)--(2,0)--(1,1)--(0,0)--(0,-1);
    \end{tikzpicture}    }
    \caption{From the left, general forms of induced cycles of length $4,5,6$, and induced paths of maximum length in the Kneser graph $K(8,2)$, up-to relabeling of the symbols}
    \label{fig:k-8-2-cycles}
\end{figure}
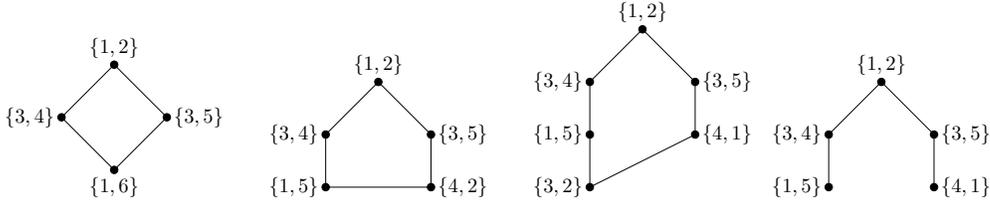

Another class of (not necessarily strongly regular) graphs satisfying the hypothesis in Theorem \ref{prop:cycle-decomp} is the class of chordal graphs. A chordal graph is a graph with no induced cycle of length more than three. A subclass of chordal graphs is the class of interval graphs which are defined as follows. If $S$ is a finite set of nonempty, closed, bounded intervals, then the interval graph $I(S)$ is the graph that has vertex set $S$, where two vertices of $I(S)$ are adjacent if the corresponding intervals have nonempty intersection. An interval graph is chordal. If $G$ is a chordal graph with at least $4$ vertices, then $G$ satisfies the hypothesis in Theorem \ref{prop:cycle-decomp}. In addition, if $G$ has diameter three or more, then $G$ does not satisfy the condition in Corollary \ref{cor:Meshulam}.
Note that there are interval graphs of arbitrary large diameters.

%  Chordal graphs, Split Graphs 
\begin{corollary}
For $n \geq 5t,$ the clique complex of the Kneser graph $K(n,t)$ satisfies $\im \delta_0 = \ker \delta_1$.
\end{corollary}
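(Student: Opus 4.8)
The plan is to deduce this from Theorem~\ref{prop:cycle-decomp}. So it suffices to verify that $K(n,t)$ has the property required there: every induced cycle of length at least four has four consecutive vertices with a common neighbor. I will in fact check the slightly stronger statement that, when $n \geq 5t$, \emph{any} four consecutive vertices on such a cycle have a common neighbor, which reduces everything to a counting estimate on unions of $t$-sets.

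Concretely, let $C=(v_1,\dots,v_{\ell})$ be an induced cycle in $K(n,t)$ with $\ell \geq 4$, so each $v_i$ is a $t$-subset of $[n]$, consecutive vertices are disjoint (i.e.\ $v_i\cap v_{i+1}=\emptyset$, indices mod $\ell$), and any two vertices of $C$ that are not consecutive on $C$ intersect, since $C$ is induced. I would look at the four distinct vertices $v_1,v_2,v_3,v_4$. The cyclic distances on $C$ from $v_1$ to $v_3$ and from $v_2$ to $v_4$ are both $\min(2,\ell-2)=2$, so these two pairs are non-adjacent in $K(n,t)$, giving $v_1\cap v_3\neq\emptyset$ and $v_2\cap v_4\neq\emptyset$. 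The key step is then to bound $|v_1\cup v_2\cup v_3\cup v_4|$ from above: from $v_1\cap v_2=\emptyset$ we get $|v_1\cup v_2|=2t$; since $v_2\cap v_3=\emptyset$ the set $v_3\cap(v_1\cup v_2)$ equals $v_3\cap v_1$, which is nonempty, so $|v_1\cup v_2\cup v_3|\leq 3t-1$; and since $v_3\cap v_4=\emptyset$ the set $v_4\cap(v_1\cup v_2\cup v_3)$ equals $v_4\cap(v_1\cup v_2)$, which contains the nonempty set $v_4\cap v_2$, so $|v_1\cup v_2\cup v_3\cup v_4|\leq 4t-2$. (For $\ell\geq 5$ one additionally has $v_1\cap v_4\neq\emptyset$ and the bound improves to $4t-3$, but this is not needed.) Because $n\geq 5t$, the complement $[n]\setminus(v_1\cup v_2\cup v_3\cup v_4)$ has size at least $n-(4t-2)=t+2\geq t$, so it contains a $t$-subset $w$; this $w$ is disjoint from each $v_i$, hence adjacent in $K(n,t)$ to all four, i.e.\ a common neighbor of $v_1,v_2,v_3,v_4$.

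Having produced four consecutive vertices with a common neighbor on an arbitrary induced cycle of length at least four, Theorem~\ref{prop:cycle-decomp} immediately yields $\im\delta_0=\ker\delta_1$ for the clique complex of $K(n,t)$. I do not expect a serious obstacle here: the only step that requires any care is the union bound $|v_1\cup v_2\cup v_3\cup v_4|\leq 4t-2$, which is exactly where the constant $5$ in the hypothesis $n\geq 5t$ is used; the rest is a routine translation between adjacency in the Kneser graph and disjointness of subsets, together with citing Theorem~\ref{prop:cycle-decomp}.
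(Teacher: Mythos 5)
Your argument is correct, but it takes a different (and slightly stronger) route than the paper. The paper states this corollary immediately after Corollary \ref{cor:Meshulam} and intends it as a direct consequence of that result: any four vertices of $K(n,t)$ are $t$-subsets whose union has size at most $4t\leq n-t$ when $n\geq 5t$, so some $t$-subset of the complement is a common neighbor of all four, and Meshulam's condition applies; no induced-cycle analysis is needed at all. You instead invoke Theorem \ref{prop:cycle-decomp} and use the structure of an induced cycle (consecutive vertices disjoint, non-consecutive vertices intersecting) to sharpen the union bound to $|v_1\cup v_2\cup v_3\cup v_4|\leq 4t-2$. This is more work than necessary for $n\geq 5t$, but it genuinely buys something: your bound shows that $n\geq 5t-2$ already suffices, which in particular covers $K(8,2)$ and $K(9,2)$ — exactly the two cases the paper has to treat separately by an ad hoc inspection of their induced cycles, precisely because they fail Meshulam's four-vertex condition. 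So your proof is valid as written, and if you kept the refined bound in the statement you would obtain a mild strengthening of the paper's corollary.
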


Let $q$ be a prime power with $q \equiv 1 \pmod 4$. The Paley graph Paley$(q)$ is defined as the graph with vertex set the elements of the finite field $\mathbb F_q$ and two vertices $x$ and $y$ are adjacent if and only if $x-y$ is a non-zero square. 
 This is a strongly regular graph with parameters $\left(q,\frac{q-1}{2},\frac{q-5}{4},\frac{q-1}{4}\right)$, see \cite[Section 9.1.2]{BH}. For sufficiently large values of $q$, the Paley$(q)$ satisfies the hypothesis of Corollary \ref{cor:Meshulam}, see \cite[Thm. 8.1]{Babai}. 

\begin{corollary}
   For $q \equiv 1 \pmod 4$ prime power and $q > 4096$, the clique complex of the Paley Graph on $q$ vertices satisfies that $\im \delta_0 = \ker \delta_1.$
\end{corollary}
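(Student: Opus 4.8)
The plan is to derive this as a direct consequence of Corollary~\ref{cor:Meshulam}: the clique complex of a graph $G$ satisfies $\im\delta_0=\ker\delta_1$ as soon as every four vertices of $G$ admit a common neighbor, so it suffices to establish that property for $\mathrm{Paley}(q)$ whenever $q>4096$. This is precisely the quantitative form of the fact cited above from \cite[Thm.~8.1]{Babai}; in particular the threshold $4096=4^{2}\cdot4^{4}$ is the value of $t^{2}4^{t}$ at $t=4$, which is the form of the common-neighbor bound proved there. Thus the first step is simply to invoke \cite[Thm.~8.1]{Babai} with $t=4$.

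For completeness I would also record the underlying character-sum estimate. Let $\chi$ be the quadratic character of $\mathbb{F}_q$, extended by $\chi(0)=0$, so that $x\sim y$ in $\mathrm{Paley}(q)$ exactly when $\chi(y-x)=1$. For distinct $x_1,\dots,x_4\in\mathbb{F}_q$, the number $N$ of common neighbors is
\[
N=\sum_{y\notin\{x_1,\dots,x_4\}}\ \prod_{i=1}^{4}\frac{1+\chi(y-x_i)}{2}
=\frac{1}{16}\sum_{T\subseteq\{1,2,3,4\}}\ \sum_{y\notin\{x_1,\dots,x_4\}}\chi\!\Big(\textstyle\prod_{i\in T}(y-x_i)\Big).
\]
The empty subset contributes $(q-4)/16$; for each nonempty $T$ the polynomial $\prod_{i\in T}(y-x_i)$ is squarefree of degree $|T|$ and hence not a perfect square, so Weil's bound gives $\big|\sum_{y\in\mathbb{F}_q}\chi(\prod_{i\in T}(y-x_i))\big|\le(|T|-1)\sqrt q$, and discarding the at most four values $y\in\{x_1,\dots,x_4\}$ costs at most $4$ more. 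Summing over the fifteen nonempty subsets gives a bound of the shape $N\ge\frac{1}{16}\big(q-17\sqrt q-64\big)$, which is positive for all $q>4096$.

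Combining these two ingredients finishes the argument: for every prime power $q\equiv1\pmod 4$ with $q>4096$, any four vertices of $\mathrm{Paley}(q)$ have a common neighbor, so Corollary~\ref{cor:Meshulam} yields $\im\delta_0=\ker\delta_1$ for its clique complex. The only point requiring care is the bookkeeping of constants in the Weil estimate so as to match the stated threshold $4096$; there is no substantive obstacle, since the statement is essentially immediate from Corollary~\ref{cor:Meshulam} together with \cite[Thm.~8.1]{Babai}.
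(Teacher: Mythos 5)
Your proposal is correct and follows essentially the same route as the paper: the paper's argument is precisely to invoke \cite[Thm.~8.1]{Babai} (whose threshold $t^2 4^t$ at $t=4$ gives $4096$) to ensure that any four vertices of Paley$(q)$ have a common neighbor, and then apply Corollary~\ref{cor:Meshulam}. Your additional Weil-bound sketch is sound but not needed beyond the citation, exactly as in the paper.
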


We apply the last result to general strongly regular graphs and find two sufficient conditions for the first cohomology group to be trivial.

\begin{theorem}
If $G$ is a $(v,k,\lambda,\mu)$-SRG such that $2\lambda + \mu + 2 > 2k$, then the clique complex of $G$ satisfies $\ker \delta_1 = \im \delta_0.$
\end{theorem}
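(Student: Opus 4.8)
The plan is to apply Theorem~\ref{prop:cycle-decomp}, so it suffices to show that every induced cycle of $G$ of length at least four has four consecutive vertices with a common neighbor. Fix such an induced cycle $C$ and four consecutive vertices $a,b,c,d$ on it. Since $C$ is induced of length $\ge 4$, the only adjacencies among $a,b,c,d$ are $a\sim b$, $b\sim c$, $c\sim d$ (and $d\sim a$ when $C$ has length exactly four); in particular $a\not\sim c$ and $b\not\sim d$, and $a,b,c,d$ are pairwise distinct (if $a=d$ then $c\sim d=a$, contradicting $a\not\sim c$). Writing $N(v)$ for the neighborhood of a vertex $v$, I will show that $N(a)\cap N(b)\cap N(c)\cap N(d)\neq\emptyset$, which completes the argument via Theorem~\ref{prop:cycle-decomp}.

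The common neighbor is produced by a counting argument anchored at the common neighbors of the non-adjacent pair $\{a,c\}$. Set $W=N(a)\cap N(c)$, so $|W|=\mu$ because $a\not\sim c$, and let $W'=W\setminus\{b,d\}$, so $|W'|\ge\mu-2$. Since $b\notin N(b)$ and $d\notin N(d)$, neither $b$ nor $d$ belongs to $N(a)\cap N(b)\cap N(c)\cap N(d)$, hence $N(a)\cap N(b)\cap N(c)\cap N(d)=W\cap N(b)\cap N(d)=W'\cap N(b)\cap N(d)$, and a union bound gives $|W'\cap N(b)\cap N(d)|\ge |W'|-|W'\setminus N(b)|-|W'\setminus N(d)|$. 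The reason for anchoring at $W$ is that $W'\subseteq N(c)$, so $W'\setminus N(b)\subseteq (N(c)\setminus N(b))\setminus\{b,d\}$: both $b$ and $d$ lie in $N(c)\setminus N(b)$ (as $b\sim c$ with $b\notin N(b)$, and $d\sim c$ with $d\not\sim b$) yet lie in neither case in $W'$, while $|N(c)\setminus N(b)|=k-\lambda$ since $b\sim c$; thus $|W'\setminus N(b)|\le k-\lambda-2$, and symmetrically $|W'\setminus N(d)|\le k-\lambda-2$ using $c\sim d$. Combining these estimates yields
\[
|N(a)\cap N(b)\cap N(c)\cap N(d)|\ \ge\ (\mu-2)-2(k-\lambda-2)\ =\ 2\lambda+\mu+2-2k,
\]
which is a positive integer under the hypothesis $2\lambda+\mu+2>2k$, hence at least $1$.

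The crux — and the step I expect to be delicate — is making the count precise enough to reach exactly the threshold $2\lambda+\mu+2>2k$. A naive union bound over three or four neighborhoods, or anchoring instead at an edge such as $\{b,c\}$ (i.e.\ using $N(b)\cap N(c)$), only produces weaker inequalities such as $3\lambda+2-2k$ or $\lambda+2\mu+2-2k$, neither of which follows from the hypothesis in general. The two extra units needed come precisely from anchoring at $N(a)\cap N(c)$ and observing that $b$ and $d$ are unavoidably ``wasted'' members of $N(c)\setminus N(b)$ and of $N(c)\setminus N(d)$. Two small points to verify along the way: first, the computation above never uses whether $a\sim d$ or not, so the same argument handles induced $4$-cycles and longer induced cycles uniformly; second, $k-\lambda\ge 2$ is automatic once $G$ contains such a cycle (the vertices $b$ and $d$ already exhibit two distinct elements of $N(c)\setminus N(b)$), so $k-\lambda-2\ge 0$ is not an additional hypothesis.
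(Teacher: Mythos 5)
Your proof is correct and follows essentially the same route as the paper: reduce to Theorem \ref{prop:cycle-decomp} and then show by a parameter count that any four consecutive vertices $a,b,c,d$ of an induced cycle have a common neighbor, arriving at exactly the paper's bound $2\lambda+\mu+2-2k>0$. The only difference is bookkeeping: the paper works inside $\Gamma(b)$, first intersecting the two $\lambda$-sets $\Gamma(a)\cap\Gamma(b)$ and $\Gamma(c)\cap\Gamma(b)$ (gaining the $+2$ because $a,c\in\Gamma(b)$ lie outside both) and then pigeonholing against the $\mu$-set $\Gamma(b)\cap\Gamma(d)$, whereas you anchor at the $\mu$-set $N(a)\cap N(c)$ inside $N(c)$ and subtract the two complements of size at most $k-\lambda-2$ (gaining the $+2$ from $b,d$), which is the same inclusion--exclusion argument up to reversing the path.
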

\begin{proof}
Let $a,b,c,$ and $d$ be four consecutive vertices in some induced cycle in $G$. Therefore, $a,c \in \Gamma(b)$, $b\not\sim d$ and $a \not \sim c$. Denote $X = \Gamma(a) \cap \Gamma(b)$ and $Y = \Gamma(c) \cap \Gamma(b)$. Because $a, c\notin X \cup Y$, $X\cup Y\subset \Gamma(b)$ and $a,c\in \Gamma(b)$, we get that $|X \cup Y| \leq k-2$. Thus, $|X \cap Y| = |X| + |Y| - |X \cup Y| \geq 2\lambda-k+2$. Because $b\not\sim d$, $|\Gamma(b)\cap \Gamma(d)|=\mu$. Thus, $|X \cap Y| + |\Gamma(b) \cap \Gamma(d)| \geq 2\lambda-k+2 + \mu >k$, where the last inequality follows from our hypothesis. Since $X\cap Y\subset \Gamma(b), \Gamma(b)\cap \Gamma(d)\subset \Gamma(b)$ and $|\Gamma(b)|=k$, we deduce that $|X \cap Y \cap \Gamma(d)| > 0$. This shows that $a, b, c,$ and $d$ have a common neighbor and completes the proof.
\end{proof}

For prime power $q$ and $r\geq 2$, the symplectic graph $Sp(2r,q)$, see \cite{TANG},  is strongly regular graph with parameters
$$\left(\frac{q^{2r}-1}{q-1},q^{2r-1},q^{2r-1}-q^{2r-2},q^{2r-1}-q^{2r-2}\right).$$ 
When $q\geq 3$ and $r\geq 2$, the graph $Sp(2r,q)$ satisfies the hypothesis of the previous theorem. There are other strongly regular graphs with this property, such as the ones with parameters 
\begin{align*}
  (36,25,16,20), (40,27,18,18), & (45,32,22,24), (49,36,25,30), \\
  (50,42,35,36), (56,45,36,36), & (64,45,32,30), (81,56,37,42).
\end{align*}

\begin{theorem}\label{thm:srg5cycle}
If $G$ is a $(v,k,\lambda,\mu)$-SRG such that 
    \begin{enumerate}[(a)]
        \item for any two non-adjacent vertices $x$ and $y$ of $G$, the induced subgraph on the vertex set $\Gamma(x) \cap \Gamma(y)$ is connected, 
        \item any induced $5$-cycle in $G$ has four vertices with a common neighbor in $G$,
    \end{enumerate}
    then the clique complex of $G$ satisfies $\ker \delta_1 = \im \delta_0$.
\end{theorem}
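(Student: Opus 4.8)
The plan is to adapt the inductive scheme behind Theorem~\ref{prop:cycle-decomp}, replacing its single ``four consecutive vertices with a common neighbour'' hypothesis by the two weaker hypotheses (a) and (b), which will handle the base cases $\ell=4$ and $\ell=5$ separately. Using the isomorphism $\ker\delta_1/\im\delta_0 \cong \ker\delta_0^T/\im\delta_1^T$ together with Corollary~\ref{cor:TCinduced}, it suffices to show $T(C)\in\im\delta_1^T$ for every cycle $C$ of $G$, and I would prove this by strong induction on the length $\ell$ of $C$. A cycle with a chord is dealt with first: Corollary~\ref{prop:two-triagles} handles chorded four-cycles, and Lemma~\ref{prop:cycle-cut} writes $T(C)$ for a longer chorded cycle as $T(C_1)+[\{v_i,v_{i+1}\},T(C)]\,T(C_2)$ with $C_1,C_2$ strictly shorter; so we may assume $C$ is an induced cycle. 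The case $\ell=3$ is immediate from Proposition~\ref{prop:TC}, and the case $\ell\ge 6$ will be reduced to shorter cycles, so hypotheses (a) and (b) are really only needed to close the gaps at $\ell=4,5$.

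For $\ell=4$, write $C=(a,b,c,d)$ with $a\not\sim c$ and $b\not\sim d$. The point I would stress is that one should \emph{not} try to produce a vertex adjacent to all four of $a,b,c,d$: such a vertex need not exist (it does when $\mu\le 3$, but can fail for larger $\mu$). Instead, hypothesis (a) makes the induced subgraph on $S:=\Gamma(a)\cap\Gamma(c)$ connected, and $S$ contains both $b$ and $d$; choose a shortest $b$--$d$ path $b=u_0,u_1,\dots,u_m=d$ inside $S$, with $m\ge 2$. Because the four-cycles $(a,u_1,c,d)$ and $(a,b,c,u_1)$ share the path $a\!-\!u_1\!-\!c$, the recursion of Definition~\ref{def:TC} and the sign conventions of \eqref{eq:signing} give $T((a,b,c,d))=\pm T((a,u_1,c,d))\pm T((a,b,c,u_1))$; here $(a,b,c,u_1)$ has the chord $\{b,u_1\}$, while $(a,u_1,c,d)$ is either a four-cycle with the chord $\{u_1,d\}$ (if $m=2$) or an induced four-cycle for which the corresponding shortest path in $S$ has length $m-1$. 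An inner induction on $m$ then finishes this case. For $\ell=5$, hypothesis (b) gives four of the five vertices of $C$ with a common neighbour $e$; those four are automatically consecutive on $C$, so Lemma~\ref{prop:supp_red} reduces $T(C)$ modulo $\im\delta_1^T$ to $\pm T(C')$ with $C'$ a genuine four-cycle (one checks $e$ is distinct from the omitted fifth vertex), and the $\ell=4$ case applies.

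For the inductive step with $\ell\ge 6$, let $C=(v_1,\dots,v_\ell)$ be induced. Since $G$ has an induced cycle of length at least four, $\mu\ge 1$, so $G$ is connected of diameter two; hence $\Gamma(v_1)\cap\Gamma(v_4)\ne\emptyset$ (as $v_1\not\sim v_4$), and it is connected by (a). Pick $e\in\Gamma(v_1)\cap\Gamma(v_4)$; since $v_1$ and $v_4$ share no neighbour among $v_1,\dots,v_\ell$ when $\ell\ge 6$, the vertex $e$ lies off $C$, so $C''=(v_1,e,v_4,v_5,\dots,v_\ell)$ is a genuine cycle of length $\ell-1$ and $C'''=(v_1,v_2,v_3,v_4,e)$ is a genuine five-cycle. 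As $C''$ and $C'''$ share the path $v_1\!-\!e\!-\!v_4$, the same edge-cancellation argument gives $T(C)=\pm T(C'')\pm T(C''')$, and both terms lie in $\im\delta_1^T$ by the inductive hypothesis (lengths $\ell-1$ and $5$, both $<\ell$). This completes the induction, and since the $T(C)$ span $\ker\delta_0^T$ while each lies in $\im\delta_1^T\subseteq\ker\delta_0^T$, we get $\ker\delta_0^T=\im\delta_1^T$, i.e.\ $\ker\delta_1=\im\delta_0$.

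The step I expect to be the main obstacle is the $\ell=4$ base case, precisely because the ``obvious'' use of (a) (find a common neighbour of the whole four-cycle) is false in general; the path-peeling trick above is the fix, and one has to be careful that peeling really does decrease the relevant invariant ($m$) and that each step produces a chorded four-cycle handled by Corollary~\ref{prop:two-triagles}. A secondary point of care is justifying the two decomposition identities $T(C)=\pm T(C'')\pm T(C''')$ directly from the recursive definition of $T(\cdot)$, and, in the $\ell\ge 6$ step, checking that $e$ never coincides with a cycle vertex so that $C''$ and $C'''$ are simple cycles.
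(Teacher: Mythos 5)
Your proposal is correct and follows essentially the same route as the paper's proof: strong induction on cycle length with chorded cycles split by Lemma \ref{prop:cycle-cut}, triangles handled by Proposition \ref{prop:TC}, induced $4$-cycles handled via hypothesis (a) by inducting on the distance between the opposite vertices inside $\Gamma(a)\cap\Gamma(c)$, induced $5$-cycles via hypothesis (b) and Lemma \ref{prop:supp_red}, and induced cycles of length at least six split through a common neighbour of $v_1$ and $v_4$ into a $5$-cycle and an $(\ell-1)$-cycle. The only cosmetic difference is in the $4$-cycle step, where you peel off a chorded $4$-cycle at the $b$-end of the shortest path rather than, as the paper does, two triangles at the $d$-end with the invariant $D(a,b,c,d)$ taken as a minimum over both diagonals; the two reductions are equivalent.
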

\begin{proof}
We will prove that if $C$ is any cycle in $G$, then $T(C) \in \im \delta_1^T$ by induction on the length of $C$. We first prove the bases cases when the length of $C$ is $3,4,$ or $5$. 

If $C=(v_1,v_2,v_3)$ is a cycle of length three, then by Prop \ref{prop:TC} $T(C)=\delta_1^T(\{v_1,v_2,v_3\})$ or $T(C)=-\delta_1^{T}(\{v_1,v_2,v_3\})$. In either case, $T(C)\in \im\delta_1^T$. 

% Consider an induced $4$-cycle $(a,b,c,d)$. As $b,d \in \Gamma(a) \cap \Gamma(c)$ and $b \not \sim d$, there is a path $v_1=b,v_2,\dots,v_{\ell}=d$ of minimum length $\ell\geq 3$ between $b$ and $d$, where $v_1,v_2,\ldots ,v_{\ell}\in \Gamma(a)\cap \Gamma(c)$. 

% If $\ell=3$, $v_2$ is a common neighbor of $a, b, c,$ and $d$. Prop \ref{prop:wheel4} implies that $T(a,b,c,d) \in \im \delta_1^T$. 

% \color{cyan}

We now prove that if $C=(a,b,c,d)$ is a cycle of length $4$, then $T(C) \in \im \delta_1^T$. 

%If $(a,b,c,d)$ is not an induced cycle, then by Corollary \ref{prop:two-triagles} and the previous paragraph, we get that $T(C)\in \im \delta_1^T$.

%If $(a,b,c,d)$ is an induced cycle in $G$, then by the hypothesis, 

Because $b,d\in \Gamma(a)\cap \Gamma(c)$, there is a path between $b$ and $d$ in the induced subgraph of $G$ on the vertex set $\Gamma(a) \cap \Gamma(c)$. Denote by $D_{a,c}(b,d)$ the length of a shortest path between $b$ and $d$ in $\Gamma(a) \cap \Gamma(c)$. Similarly, denote by $D_{b,d}(a,c)$ the length of a shortest path between $a$ and $c$ in $\Gamma(b) \cap \Gamma(d)$. We also define
\begin{equation*}
D(a,b,c,d) = \min\{D_{a,c}(d,b), D_{b,d}(a,c)\}.
\end{equation*}

% Sebi, July 21, 2025:
% the definition by cases of D(a,b,c,d) is not necessary 
%

We now prove that if $(a,b,c,d)$ is any cycle in $G$, then $T(a,b,c,d) \in \im \delta_1^T$ by strong induction on $D(a,b,c,d)$. 

If $(a,b,c,d)$ is a cycle in $G$ with $D(a,b,c,d)=1$, then either $a \sim c$ or $b \sim d$. In either case, Corollary \ref{prop:two-triagles} implies that $T(a,b,c,d) \in \im \delta_1^T$. 

If $D(a,b,c,d)=2$, then there is a common neighbor of the vertices $a, b, c$, and $d$. By Prop \ref{prop:wheel4}, we deduce that $T(a,b,c,d) \in \im \delta_1^T$.

Let $\ell\geq 3$. Assume that if $C'=(a',b',c',d')$ is any cycle in $G$ with $D(a',b',c',d') \leq \ell-1,$ then $T(a',b',c',d') \in \im \delta_1^T$.

Let $C=(a,b,c,d)$ be a cycle with $D(a,b,c,d)=\ell$. Without loss of generality, assume that there is a path $v_1=b,v_2,\dots,v_{\ell+1}=d$ in $\Gamma(a) \cap \Gamma(c)$ and $D_{b,d}(a,c)\geq \ell$.

We note that $C'=(a,b,c,v_{\ell})$ is a cycle in $G$ and $D(a,b,c,v_{\ell}) \leq \ell-1$. Also $C_1=(a,v_{\ell},d)$ and $C_2 = (c,v_{\ell},d)$ are cycles in $G$. Set 
\begin{align*}
    x_1 &= -[\{a,d\}, T(C)] \cdot [\{a,d\}, T(C_1)],\\
    x_2 &= -[\{c,d\}, T(C)] \cdot [\{c,d\}, T(C_1)],\\
    S &= x_1 T(C_1) + x_2T(C_2).
\end{align*}

By the choices of $x_1$ and $x_2$, the terms $\{a,d\}$ and $\{c,d\}$ vanish in the expansion of $T(C)+S$. Thus, the support of $T(C)+S$ is contained in $\{\{a,b\}, \{b,c\}, \{c,v_{\ell}\}, \{v_{\ell},a\}, \{v_{\ell},v_{\ell+1}\}\}.$ Since $T(C)+S \in \ker \delta_0^T$ and the only edge in this set incident with the vertex $v_{\ell+1}$ is $\{v_{\ell},v_{\ell+1}\},$ we conclude that the term $\{v_{\ell},v_{\ell+1}\}$ also vanishes in $T(C)+S.$ Finally, each of the remaining four edges appears with a nonzero coefficient in the expansion of $T(C)+S$. Thus, the $\ker \delta_0^T$ element $T(C)+S$ has support comprising of the edges of the cycle $C'=(a,b,c,v_{\ell})$. By Prop \ref{prop:TC}, we deduce that $T(C)+S=cT(C')$ for some nonzero $c$. As $D(C') < \ell$, by the inductive hypothesis, $T(C') \in \im \delta_1^T$. Consequently, $T(C) = cT(C')-S \in \im \delta_1^T$.  

Consider now a cycle $C=(a,b,c,d,e)$ of length $5$.  If $C$ is not induced, then, without loss of generality, we may assume that $a \sim d.$ By Prop \ref{prop:cycle-cut}, there is $c \in \{-1,1\}$ such that 
\begin{equation*}
    T(C) = T(a,b,c,d) + cT(d,e,a).
\end{equation*} 
By the work done earlier in this proof, $T(a,b,c,d)\in \im \delta_1^T$ and $T(d,e,a)\in \im \delta_1^T$. Thus, $T(C) \in \im \delta_1^T$. 

If $C$ is induced, then, by our hypothesis, there is some vertex $f$ in $G$ such that $f$ is incident with four vertices of the cycle. Without loss of generality, assume that $f$ is adjacent with the vertices $a, b, c,$ and $d$. By Prop \ref{prop:supp_red}, there is $S \in \im \delta_1^T$ such that the $\ker \delta_0^T$ element $T(C)+S$ has support $\{\{a,f\},\{f,d\},\{d,e\},\{a,e\}\}.$ By Prop \ref{rmk-tc-uniqueness}, there is $c'\in \{-1,1\}$ such that 
\begin{equation*}
T(C)+S = c'T(C'),
\end{equation*}
where $C'=(a,f,d,e)$ is a cycle of length $4$. By our previous case, we have $T(C') \in \im \delta_1^T$, and so, $T(C)= c'T(C')-S \in \im \delta_1^T$.

Let $\ell\geq 6$. Assume that for any cycle $C'$ of length $\ell -1$ or less, $T(C') \in \im \delta_1^T$. Let $C=(v_1,\dots,v_{\ell})$ be a cycle of length $\ell$. We will show that $T(C) \in \im \delta_1^T$. If $C$ is not induced, we may assume, without the loss of generality, that $v_1 \sim v_i$ for some $3 \leq i \leq \ell-2$. By Prop \ref{prop:cycle-cut}, there is $c'' \in \{-1,1\}$ such that
\begin{gather*}
    T(C) = T(C_1) + c''T(C_2),
\end{gather*}
where $C_1=(v_1,\dots,v_i)$ and $C_2=(v_i,v_{i+1},\dots,v_{\ell},v_1)$. Since both the cycles $C_1,C_2$ have length at most $\ell-1$, $T(C_1) \in \im \delta_1^T$ and $T(C_2) \in \im \delta_1^T$ from the induction hypothesis. Consequently, $T(C) \in \im \delta_1^T$. 

If $C$ is an induced cycle, then there is a vertex $x\notin \{v_1,\dots,v_{\ell}\}$ such that $v_1 \sim x$ and $v_4 \sim x$ (since $v_1 \not \sim v_4$). 
We denote
\begin{gather*}
    C_1 = (v_1,x,v_4,v_3,v_2), \text{ and } C_2 = (v_1,x,v_4,v_5,\dots,v_{\ell}).
\end{gather*}
Consider
\begin{gather*}
    S := T(C_1)-T(C_2).
\end{gather*}
From the definition of $T$, we have that 
\begin{gather*}
    [\{v_1,x\}, T(C_1)] = [\{v_1,x\}, T(C_2)] =1.
\end{gather*}
From Definition \ref{def:TC}, we get that
\begin{align*}
    [\{x,v_4\}, T(C_1)]&= -[\{v_1,x\}, T(C_1)] \cdot [\{v_1,x\}:x] \cdot [\{x,v_4\}:v_4]= -[\{v_1,x\}:x] \cdot [\{x,v_4\}:v_4],\\
    [\{x,v_4\}, T(C_2)] &= -[\{v_1,x\}, T(C_2)] \cdot [\{v_1,x\}:x] \cdot [\{x,v_4\}:v_4]= -[\{v_1,x\}:x] \cdot [\{x,v_4\}:v_4].
\end{align*}
Therefore, both terms $\{v_1,x\}$ and $\{x,v_4\}$ vanish in the expansion of $T(C_1)-T(C_2)$. Since each of the remaining edges in $C_1$ and $C_2$ appear with coefficients $1$ or $-1$ in the expansion of $T(C_1)-T(C_2)$, by Prop \ref{prop:TC}, we conclude that there is some $c$ with 
\begin{gather*}
    T(C_1)-T(C_2) = cT(C).
\end{gather*}
Observe now that the length of the cycles $C_1$ and $C_2$ are $5$ and $\ell-1$ respectively. Hence, by the inductive hypothesis, both of the terms $T(C_1)$ and $T(C_2)$ are in $\im \delta_1^T.$ We conclude that $T(C) \in \im \delta_1^T$ as well, and the proposition follows.     
\end{proof}
% \color{black} 

% If $\ell > 3$, using a similar argument as in the first part of the proof of Prop \ref{prop:supp_red}, we can see that there is $S \in \im \delta_1^T$ such that the support of $T(C) + S$ is obtained by removing the edges $\{a,d\},\{d,c\}$ and by adding the edges $\{a, v_{\ell-1}\}, \{v_{\ell-1},c\}.$ This gives a new cycle $T(a,b,c,v_{{\ell}-1})$ where the shortest path between $b$ and $v_{{\ell}-1}$ that lies in $\Gamma(a) \cap \Gamma(b)$ has length ${\ell}-1$. 

% Proceeding this way eventually yields an element of $\im \delta_1^T.$

% The hypothesis in Part $(b)$ implies that any induced $5-$ cycle can be reduced to a $4-$ cycle using Prop \ref{prop:cycle-cut}. 

%For any induced cycle $(v_1,v_2,\dots,v_{\ell})$ of length at least $6$, we have $\mu > 0$ since $v_1 \not \sim v_3$ but $v_1 \sim v_2 \sim v_3$. Moreover, we may choose $x$ such that $v_1 \sim x$ and $x \sim v_4$, and express\begin{gather} T(v_1,\dots,v_{\ell}) = T(v_1,v_2,v_3,v_4,x) + c T(v_4,v_5,\dots,v_{\ell},x),    \end{gather}
%    where  \begin{gather}c = -[\{v_3,v_4\}:v_4][\{v_4,v_5\}:v_4][\{v_3,v_4\}, T(v_1,\dots,v_{\ell})]. \end{gather}
    %It is straightforward to check the equality and is skipped. The proof concludes by observing that the two new terms on the right hand side each have fewer weight than $T(C)$ and we may continue this process of reduction until each term has weight at most $5$. 

A conference graph on $v$ vertices is a strongly regular graph with parameters $(v, \frac{v-1}{2}, \frac{v-5}{4},\frac{v-1}{4}).$ 

\begin{prop}\label{prop:confGammaxconn}
Let $G$ be a $\left(v, \frac{v-1}{2}, \frac{v-5}{4},\frac{v-1}{4}\right)$-SRG. Let $x$ be a vertex of $G$ and $\Gamma(x)$ be the subgraph of $G$ induced by the neighborhood of $x$. If $v>9$, then the subgraph $\Gamma(x)$ is connected.
\end{prop}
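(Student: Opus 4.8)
The plan is to prove the statement in contrapositive, quantitative form: assuming $\Gamma(x)$ is disconnected, I will deduce $v \le 9$. Conference graphs have $v \equiv 1 \pmod 4$, so writing $v = 4m+1$ the parameters become $k = 2m$, $\lambda = m-1$, $\mu = m$, and $v > 9$ means $m \ge 3$. First I would record the local structure of $\Gamma(x)$: for every $y \in \Gamma(x)$, the $\lambda$ common neighbours of the adjacent pair $x, y$ all lie in $\Gamma(x)$, so $\Gamma(x)$ is $\lambda$-regular on its $k$ vertices, and each connected component of $\Gamma(x)$ is again $\lambda$-regular. A $\lambda$-regular graph has at least $\lambda+1 = m$ vertices in each component, so a disconnected $\Gamma(x)$ has at least $2m = k$ vertices; equality forces exactly two components, each an $(m-1)$-regular graph on $m$ vertices, hence a complete graph $K_m$. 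Call these two cliques $A$ and $B$; then $\{x\} \cup A$ and $\{x\} \cup B$ are cliques $K_{m+1}$ of $G$, and there is no edge between $A$ and $B$.

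Next I would locate $A \cup B$ inside $G$. Set $S = V(G) \setminus (\{x\} \cup A \cup B)$, so $|S| = v - 1 - 2m = 2m$. For $a \in A$, the neighbours of $a$ in $G$ are $x$, the $m-1$ vertices of $A \setminus \{a\}$, and $k - 1 - (m-1) = m$ further vertices; since the neighbours of $a$ inside the induced subgraph $\Gamma(x)$ are exactly $A \setminus \{a\}$, those $m$ further neighbours avoid $B$ and therefore all lie in $S$. Symmetrically, every $b \in B$ has exactly $m$ neighbours in $S$ and none in $A$. For $s \in S$ we have $s \not\sim x$ (otherwise $s \in \Gamma(x) = A \cup B$), so $s$ and $x$ have exactly $\mu = m$ common neighbours, all of which lie in $\Gamma(x) = A \cup B$; writing $\alpha_s = |\Gamma(s) \cap A|$ and $\beta_s = |\Gamma(s) \cap B|$, this gives $\alpha_s + \beta_s = m$ for every $s \in S$.

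The decisive step is a double count of the paths $a \sim s \sim b$ with $a \in A$, $s \in S$, $b \in B$. Counting over $s$ gives $\sum_{s \in S} \alpha_s \beta_s$. Counting over pairs $(a,b)$: such $a$ and $b$ are non-adjacent, so they have exactly $\mu = m$ common neighbours; one of them is $x$, and the other $m-1$ lie in $\Gamma(a) \cap \Gamma(b) \cap S$, because $A, B, S$ are pairwise disjoint, $\Gamma(a) \setminus \{x\} \subseteq (A \setminus \{a\}) \cup S$, and $\Gamma(b) \setminus \{x\} \subseteq (B \setminus \{b\}) \cup S$, so these two sets intersect only inside $S$. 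Hence each of the $m^2$ pairs $(a,b)$ contributes $m-1$ such paths, and $\sum_{s \in S} \alpha_s \beta_s = m^2(m-1)$. On the other hand $\alpha_s \beta_s \le (\alpha_s + \beta_s)^2 / 4 = m^2/4$ for every $s$, so $\sum_{s \in S} \alpha_s \beta_s \le |S| \cdot m^2/4 = m^3/2$. Comparing the two, $m^2(m-1) \le m^3/2$, hence $m \le 2$, hence $v \le 9$, contradicting $v > 9$.

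I expect the main obstacle to be not any one calculation but making the rigidity in the first paragraph fully airtight — that disconnectedness forces precisely two cliques $K_m$ — and then carefully justifying, from the SRG axioms and this clique structure alone, the two incidence facts powering the double count: that every vertex of $A$ has exactly $m$ neighbours in $S$ and none in $B$, and that every common neighbour of $a \in A$ and $b \in B$ other than $x$ lies in $S$. Once these are in place, the identity $\sum_{s \in S} \alpha_s \beta_s = m^2(m-1)$ against the elementary bound $m^3/2$ finishes the argument at once. As a consistency check, the Paley graph on $9$ vertices has $\Gamma(x) = K_2 \sqcup K_2$, which shows the hypothesis $v > 9$ cannot be weakened.
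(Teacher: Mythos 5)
Your argument is correct, and it takes a genuinely different route from the paper. The paper's proof is spectral: it notes that $\Gamma(x)$ is $\lambda$-regular on $k$ vertices and invokes an interlacing-type bound (Godsil--Royle, Lemma 10.6.1) to get that the second largest adjacency eigenvalue of $\Gamma(x)$ is at most $\frac{-1+\sqrt{v}}{2}$, which is strictly below $\lambda=\frac{v-5}{4}$ once $v>9$, so $\lambda$ is a simple eigenvalue and $\Gamma(x)$ is connected. You instead argue combinatorially: writing $v=4m+1$, disconnectedness of the $(m-1)$-regular graph $\Gamma(x)$ on $2m$ vertices forces exactly two components, each a clique $K_m$ (each component has at least $\lambda+1=m$ vertices, and $2m$ is all there is); then the double count of paths $a\sim s\sim b$ through $S=V(G)\setminus(\{x\}\cup A\cup B)$, using $\alpha_s+\beta_s=\mu=m$ and the fact that every pair $(a,b)\in A\times B$ has exactly $m-1$ common neighbours in $S$, yields $m^2(m-1)=\sum_{s}\alpha_s\beta_s\le 2m\cdot\frac{m^2}{4}$, hence $m\le 2$ and $v\le 9$. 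Each step you flag as delicate (the rigidity to two $K_m$'s, the incidence facts $|\Gamma(a)\cap S|=m$, $\Gamma(a)\cap B=\emptyset$, and that common neighbours of $a,b$ other than $x$ lie in $S$) does hold for the reasons you give. What your approach buys is an elementary, self-contained proof that needs no eigenvalue machinery and, as a bonus, identifies the exceptional local structure $K_m\sqcup K_m$ at $v=5,9$, matching the remark the paper makes after the proposition; what the paper's approach buys is brevity and a one-line reduction to a standard spectral fact about local graphs of strongly regular graphs.
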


\begin{proof} Assume that $v>9$. Let $x\in V$. The subgraph $\Gamma(x)$ has $k=\frac{v-1}{2}$ vertices and is regular of valency $\lambda=\frac{v-5}{4}$. In order to show that $\Gamma(x)$ is connected, we prove that $\sigma<\frac{v-5}{4}$, where $\sigma$ is the second largest eigenvalue of the adjacency matrix of $\Gamma(x)$. Using \cite[Lemma 10.6.1]{GR}, we deduce that $\sigma\leq \frac{-1+\sqrt{v}}{2}$. As $v>9$, we get that $\frac{-1+\sqrt{v}}{2}<\frac{v-5}{4}$ which proves our assertion.
% The inequality $\frac{-1+\sqrt{v}}{2}<\frac{v-5}{4}$ is equivalent to $(\sqrt{v}-1)^2>4$ which is the same as $v>9$.
\end{proof}

Note that when $v\in \{5,9\}$, the previous result is not true. When $v=5$, the graph $G$ is the cycle on $5$ vertices and for any vertex $x$, $\Gamma(x)$ consists of two isolated vertices. When $v=9$, the graph $G$ is the $3\times 3$ grid or Hamming graph $H(2,3)$ and for any vertex $x$, $\Gamma(x)$ is a disjoint union of two edges.

\begin{theorem}\label{thm:conference-h1}
Let $v > 9.$ If $G$ is a $\left(v, \frac{v-1}{2}, \frac{v-5}{4},\frac{v-1}{4}\right)$-SRG such that, for any pair of nonadjacent vertices $x$ and $y$, $\Gamma(x) \cap \Gamma(y)$ is connected, then $\im  \delta_0 = \ker \delta_1$.
\end{theorem}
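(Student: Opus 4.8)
The plan is to apply Theorem~\ref{thm:srg5cycle}. Its hypothesis~(a), that $\Gamma(x)\cap\Gamma(y)$ is connected for every pair of nonadjacent vertices $x,y$, is exactly what is assumed here, so the whole task reduces to verifying hypothesis~(b): that every induced $5$-cycle of $G$ has four of its vertices with a common neighbor. Fix an induced $5$-cycle $C=(v_1,\dots,v_5)$ and, for a vertex $f$, write $m(f)=|\Gamma(f)\cap\{v_1,\dots,v_5\}|$. Since any four vertices of a $5$-cycle form a path $P_4$ (four consecutive vertices of $C$ after relabeling), it suffices to exhibit a vertex $f$ with $m(f)\ge 4$.

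I would begin with a double count. We have $\sum_f m(f)=\sum_{i}\deg(v_i)=5k$ and $\sum_f m(f)^2=\sum_{i,j}|\Gamma(v_i)\cap\Gamma(v_j)|=5k+10(\lambda+\mu)$, where the last equality counts the five edges and five non-edges of $C$. Since $k=\lambda+\mu+1$ for a conference graph, $\sum_f m(f)(3-m(f))=15k-5k-10(\lambda+\mu)=10(k-\lambda-\mu)=10$. Suppose, for contradiction, that $m(f)\le 3$ for every vertex $f$. The five vertices of $C$ already contribute $\sum_i m(v_i)(3-m(v_i))=5\cdot 2\cdot 1=10$, so every vertex off $C$ satisfies $m(f)\in\{0,3\}$; in particular no vertex meets exactly one or exactly two vertices of $C$. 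Using $k=2\mu$ and partitioning each $\Gamma(v_i)$ by adjacency to the two cycle vertices at distance two from $v_i$, one can then solve for the numbers $x_i,y_i$ of off-cycle vertices whose cycle-neighborhood is a prescribed ``path'' triple $\{v_{i-1},v_i,v_{i+1}\}$ or a prescribed ``edge-plus-far-vertex'' triple, and find that they are all equal to $\tfrac{v-5}{12}$; this forces $v\equiv 5\pmod{12}$, so for $v\not\equiv 5\pmod{12}$ the assumption is already contradicted and~(b) holds.

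The remaining case $v\equiv 5\pmod{12}$ is the heart of the matter, and this is where the connectedness hypothesis must be used in an essential, non-enumerative fashion---the moment computation above shows that ``no vertex meets four vertices of $C$'' is consistent with all first- through fourth-order counts on $C$, so a purely combinatorial count on $C$ cannot finish the job. The idea is to examine the common neighborhoods $\Gamma(v_i)\cap\Gamma(v_{i+2})$ and, more delicately, $\Gamma(f)\cap\Gamma(v_j)$ for an off-cycle $m=3$ vertex $f$ and a cycle vertex $v_j$ with $f\not\sim v_j$: each such set has size $\mu$, contains exactly one or exactly two cycle vertices, and inside it a cycle vertex is adjacent only to those members whose cycle-neighborhood triple contains it. Connectedness of these sets (here $v>9$ is used, since it guarantees $\mu$ is large enough for the argument) then forces edges between the various classes of off-cycle vertices, and running this bookkeeping around all of $C$ over-determines the class sizes $x_i,y_i$ and produces a contradiction. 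I expect this last step to be the main difficulty. Once a vertex $f$ with $m(f)\ge 4$ has been produced, hypothesis~(b) of Theorem~\ref{thm:srg5cycle} holds, and hence $\im\delta_0=\ker\delta_1$.
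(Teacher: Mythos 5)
Your reduction to Theorem~\ref{thm:srg5cycle} is only half carried out, and the missing half is exactly the hard part. The double count giving $\sum_f m(f)(3-m(f))=10(k-\lambda-\mu)=10$ is fine, and it does show that if no vertex sees four vertices of the induced $5$-cycle then every off-cycle vertex has trace of size $0$ or $3$; but your argument then only rules this out when the resulting class sizes $\tfrac{v-5}{12}$ fail to be integers, i.e.\ when $v\not\equiv 5\pmod{12}$. For $v\equiv 5\pmod{12}$ (which includes graphs the theorem is meant to cover, e.g.\ Paley$(17)$, Paley$(29)$, Paley$(41)$) you explicitly leave the contradiction as a sketch ("running this bookkeeping \dots produces a contradiction", "I expect this last step to be the main difficulty"). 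That is a genuine gap: hypothesis~(b) of Theorem~\ref{thm:srg5cycle} is never established in these cases, and it is not at all clear that it can be --- the paper itself does not claim that every induced $5$-cycle in such a conference graph has four vertices with a common neighbor, and its proof is structured precisely to avoid needing that statement.

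What the paper does instead is re-run the induction of Theorem~\ref{thm:srg5cycle} but replace the induced-$5$-cycle step by a different argument tailored to conference graphs. Given an induced $C=(v_1,\dots,v_5)$, it splits on whether $\Gamma(v_1)\cap\Gamma(v_3)\cap\Gamma(v_4)$ is empty. If nonempty, a common neighbor $f$ of $v_1,v_3,v_4$ lets one write $T(C)$ as a combination of $T$'s of cycles of length $3$ and $4$. If empty, then since $k=2\mu$ the sets $E_1=\Gamma(v_1)\cap\Gamma(v_3)$ and $E_2=\Gamma(v_1)\cap\Gamma(v_4)$ partition $\Gamma(v_1)$, and Proposition~\ref{prop:confGammaxconn} (this is where $v>9$ enters, via an eigenvalue bound) says $\Gamma(v_1)$ is connected, so there is an edge $x\sim y$ with $x\in E_1$, $y\in E_2$; the paper then decomposes $T(C)$ explicitly through the cycles $(x,y,v_1)$, $(x,y,v_4,v_3)$, $(v_1,v_2,v_3,x)$, $(v_1,v_5,v_4,y)$, all of length $3$ or $4$. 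If you want to salvage your route, you must either complete the $v\equiv 5\pmod{12}$ case rigorously or abandon the attempt to verify hypothesis~(b) and handle induced $5$-cycles by a direct decomposition of $T(C)$ as above.
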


\begin{proof}

It suffices to show that if $C$ is any cycle in $G$, then $T(C) \in \im \delta_1^T$. We prove this by induction on the length of $C$. In the proof of Theorem \ref{thm:srg5cycle}, we observed that if the length of $C$ is $3,$ then $T(C) \in \im \delta_1^T$. Using the second hypothesis, we have also proved in Theorem \ref{thm:srg5cycle} that if $C$ is any cycle of length $4,$ then $T(C) \in \im \delta_1^T.$

Consider now any cycle $C=(v_1,v_2,v_3,v_4,v_5)$ in $G$. If $C$ is not induced, then, without loss of generality, we assume $v_1 \sim v_4$. As in the proof in Theorem \ref{thm:srg5cycle}, 
$T(C) = T(v_1,v_2,v_3,v_4) + cT(v_4,v_5,v_1)$, for some $c\in \{-1,1\}$.  By the previous paragraph, $T(v_1,v_2,v_3,v_4) \in \im \delta_1^T$ and $T(v_4,v_5,v_1) \in \im \delta_1$. Thus $T(C) \in \delta_1^T$. 

Assume now that $C$ is induced. We consider two cases:
\begin{enumerate}
        \item \textbf{Case  $\Gamma(v_1) \cap \Gamma(v_3) \cap \Gamma(v_4) \not = \emptyset$:} Let $f \in \Gamma(v_1) \cap \Gamma(v_3) \cap \Gamma(v_4)$. Denote the cycles 
        \begin{align*}
            C_1 = (v_1,v_2,v_3,f),\  C_2 = (v_3,f,v_4),\  C_3=(v_4,f,v_1,v_5),\ C_4 = (v_1,v_2,v_3,v_4,f).
        \end{align*}
    Let 
    \begin{align*}
        x_1 & =-[\{v_3,f\}, T(C_1)\}]\cdot [\{v_3,f\},T(C_2)], \text{ and,}\\
        S_1 & = T(C_1)+x_1T(C_2). 
    \end{align*}
    By construction, the term $\{v_3,f\}$ vanishes in the expansion of $S_1.$ Thus, the support of $S_1$ consists of the edges of the cycle $C_4,$ since each of the the remaining edges each appear with coefficients $1$ or $-1$. Since $S_1 \in \ker \delta_0^T,$ by Prop \ref{prop:TC}, there is some constant $c$ such that $S_1 = cT(C_4).$ But since the coefficient of the edge $\{v_1,v_2\}$ is $1$ in both $T(C_4)$ and $S_1$, we deduce that $c=1$ and $S_1 = T(C_4)$. 
    
    Define now
    \begin{align*}
        x_2 &= -[\{f,v_4\}, T(C_4)] \cdot [\{f,v_4\}, T(C_3)], \text{ and,} \\
        S_2 &=  T(C_4)+x_2T(C_3) \in \ker \delta_0^T.
    \end{align*}
    By construction, the term $\{f,v_4\}$ vanishes in the expansion of $S_2$. Thus, the support of $S_2$ comprises of the edges in the cycle $C$. This implies that there is $c'\in \{-1,1\}$ such that $S_2 = c'T(C).$ Furthermore,
    \begin{gather*}
        [\{v_1,v_2\}, S_2] = [\{v_1,v_2\}, T(C_1)] = 1 = [\{v_1,v_2\}, T(C)],
    \end{gather*}
    which implies that $c'=1$ and $S_2 = T(C)$. We have that
    \begin{gather*}
        T(C)=T(C_4)+x_2T(C_3)=T(C_1)+x_1T(C_2)+x_2T(C_3).
    \end{gather*}
    Note that each of the cycles $C_1,C_2,C_3$ has length $3$ or $4,$ and hence by our arguments above, $T(C_i) \in \im \delta_1^T$, for any $i\in \{1,2,3\}$. Thus, $T(C) \in \im \delta_1^T$ as well.

    \item \textbf{Case  $\Gamma(v_1) \cap \Gamma(v_3) \cap \Gamma(v_4) = \emptyset$:} Let $E_1 = \Gamma(v_1) \cap \Gamma(v_3)$ and $E_2 = \Gamma(v_1) \cap \Gamma(v_4).$ Then $v_2 \in E_1, v_5 \in E_2$. Because $\Gamma(v_1)\cap \Gamma(v_3)\cap \Gamma(v_4)=\emptyset$,  $E_1 \cup E_2$ is a partition of $\Gamma(a)$ with $|E_1|=|E_2| = \frac{v-1}{4}$. Since $\Gamma(v_1)$ is connected, there are $x \in E_1$ and $y \in E_2$ such that $x \sim y$. 

Let us define the cycles
    \begin{align*}
        C_1 &= (x,y,v_1), \ & C_2= (x,y,v_4,v_3),\\
        C_3 &= (v_1,x,v_3,v_4,y), \ & C_4 = (v_1,v_2,v_3,x),\\
        C5 &= (v_1,v_2,v_3,v_4,y), \ & C_6=(v_1,v_5,v_4,y).
    \end{align*}

Denote $S_1 = T(C_1)-T(C_2)$. Since
    \begin{gather*}
        [\{x,y\}, T(C_1)] = [\{x,y\}, T(C_2)] =1,
    \end{gather*}
    the term $\{x,y\}$ vanishes in the expansion of $S_1.$ Thus, the $\ker \delta_0^T$ element $S_1$ has support precisely the set of edges of the cycle $C_3=(v_1,x,v_3,v_4,y)$. By Prop \ref{prop:TC}, there is $c\neq 0$ such that 
    \begin{gather*}
        S_1 = cT(C_3).
    \end{gather*}

Denote
    \begin{align*}
        x_1 &= -[\{x,v_3\}, T(C_4)]\cdot [\{x,v_3\}, S_1]=-[\{x,v_3\}, T(C_4)]\cdot [\{x,v_3\}, cT(C_3)], \text{ and,}\\
        S_2 &= S_1 + x_1T(C_4)= cT(C_3) + x_1 T(C_4). 
    \end{align*}
    By construction, the term $\{x,v_3\}$ vanishes in $S_2$. If we denote by $A_1$ and $A_2$ the set of edges in the cycle $C_5=(v_1,v_2,v_3,v_4,y)$ and the support of $S_2$, respectively, then
    \begin{gather*}
        A_1 \subseteq A_2 \subseteq A_1 \cup \{\{v_1,x\}\}.
    \end{gather*}
    As $\{v_1,x\}$ is the only edge in $A_1 \cup \{\{v_1,x\}\}$ incident with the vertex $x$, the support of the $\ker \delta_0^T$ element $S_2$ cannot be $A_1 \cup \{\{v_1,x\}\}.$ Hence, $A_2=A_1$. and by Prop \ref{prop:TC}, there is $c'\neq 0$ such that
    \begin{gather*}
        S_2 = c'T(C_5).
    \end{gather*}
    Thus,
    \begin{gather*}
        c'T(C_5) = S_2 = cT(C_3)+x_1T(C_4).
    \end{gather*}

Finally, we denote
    \begin{align*}
        x_2 &= -[\{y,v_4\}, T(C_6)]\cdot [\{y,v_4\}, S_2]=-[\{y,v_4\}, T(C_6)]\cdot [\{y,v_4\}, c'T(C_5)], \text{ and,}\\
        S_3 &= S_2 + x_2T(C_6) = c'T(C_5) + x_2T(C_6).
    \end{align*}
    By construction, the term $\{y,v_4\}$ vanishes in the expansion of $S_3$. If we denote by $A_3$ and $A_4$ by the set of edges in the cycle $C=(v_1,v_2,v_3,v_4,v_5)$ and the support of $S_3$, respectively, then
    \begin{gather*}
        A_3 \subseteq A_4 \subseteq A_3 \cup \{\{v_1,y\}\}.
    \end{gather*}
    As $S_3 \in \ker \delta_0^T$ and the only edge in $A_3 \cup \{\{v_1,y\}\}$ that is incident with the vertex $y$ is $\{v_1,y\},$ we conclude that $\{v_1,y\} \not \in A_4,$ and so, $A_3 = A_4.$ By Prop \ref{prop:TC}, there is some nonzero $c''$ such that 
    \begin{gather*}
        S_3 = c''T(C).
    \end{gather*}
    Using the above equations, we obtain that
\begin{align*}
        T(C) &= \frac{1}{c''}S_3= \frac{1}{c''}(c'T(C_5))+\frac{x_2}{c''}T(C_6)=\frac{1}{c''}(cT(C_3))+\frac{x_1}{c''}T(C_4)+\frac{x_2}{c''}T(C_6)\\
        &= \frac{1}{c''}S_1+\frac{x_1}{c''}T(C_4)+\frac{x_2}{c''}T(C_6)\\
        &= \frac{1}{c''}T(C_1)-\frac{1}{c''}T(C_2)+\frac{x_1}{c''}T(C_4)+\frac{x_2}{c''}T(C_6).
\end{align*}
Since each of the cycles $C_1,C_2,C_4,C_6$ has length $3$ or $4,$ by the above discussions, we have $T(C_i) \in \im \delta_1^T, i\in \{1,2,4,6\},$ and therefore, $T(C) \in \im \delta_1^T$ as well.
\end{enumerate}

We have thus established that if $C$ is any cycle of length $3,4,$ or $5,$ then $T(C) \in \im \delta_1^T.$ Assume now that if $C'$ is any cycle of length at most $\ell-1$, then $T(C') \in \im \delta_1^T.$ Let $C=(v_1,\dots,v_{\ell})$ be a cycle of length $\ell \geq 6$. If $C$ is not induced, we may assume that $v_1 \sim v_i$ for some $3 \leq i \leq \ell-1.$ By Prop \ref{prop:cycle-cut}, there is $d\in \{-1,1\}$ such that
\begin{gather*}
        T(C) = T(C_1) + dT(C_2),
\end{gather*}
where $C_1=(v_1,\dots,v_i), C_2=(v_i,v_{i+1},\dots,v_{\ell}, v_1)$. As both of the cycles $C_1$ and $C_2$ have lengths at most $\ell-1$, by the inductive hypothesis, $T(C_1) \in \im \delta_1^T$ and $T(C_2) \in \im \delta_1^T$. Hence, $T(C) \in \im \delta_1^T$.
    
On the other hand, if $C$ is induced, then $v_4 \not \sim v_1$. Since $G$ is strongly regular, there is some vertex $x$ in $G$, $x \notin \{v_1,\ldots,v_{\ell}\}$ such that $v_1 \sim x$ and $x \sim v_4$. We have proved in Theorem \ref{thm:srg5cycle} that there is some nonzero constant $d'$ such that
    \begin{gather*}
        d'T(C) = T(C'_1) - T(C'_2),
    \end{gather*}
    where $C'_1=(v_1,x,v_4,v_3,v_2)$ and $C'_2=(v_1,x,v_4,v_5,\dots,v_{\ell})$. Both cycles $C'_1$ and $C'_2$ have lengths at most $\ell-1$ and by the inductive hypothesis, $T(C'_1) \in \im \delta_1^T$ and $T(C'_2) \in \im \delta_1^T,$ and thus, $T(C) \in \im \delta_1^T$. By the induction principle, $T(C) \in \im \delta_1^T$ for any cycle $C$, and the proof concludes.
\end{proof}
\color{black}
%  10 graphs on 25 vertices satisfy this, but not Meshulam
We end the section by providing some examples of strongly regular graphs that satisfy the hypothesis in Theorem \ref{thm:conference-h1} and hence have a trivial first cohomology group. By computer, we have verified that Paley$(q)$ satisfies this hypothesis for $q \in \{17,25,29,37,41,49,53,61,73,81,89,97\}$, while of these, only Paley$(89)$ and Paley$(97)$ satisfy the condition in Corollary \ref{cor:Meshulam}. Of the $15$ conference graphs on $25$ vertices having the parameters $(25,12,5,6),$ $10$ satisfy the hypothesis in Theorem \ref{thm:conference-h1}, while none satisfies the hypothesis in Corollary \ref{cor:Meshulam}. Of the $41$ conference graphs with parameters $(29,14,6,7)$, only the Paley$(29)$ satisfies the hypothesis in Theorem \ref{thm:conference-h1} and does not satisfy the condition in Corollary \ref{cor:Meshulam}. We also investigated the $6760$ conference graphs with parameters $(37,18,8,9)$ from Ted Spence's website \cite{Spence}. Of these, $6120$ satisfy the hypothesis in Theorem \ref{thm:conference-h1}, while none of the graphs in the entire collection satisfies the condition in Corollary \ref{cor:Meshulam}.

\section{Computations and examples}\label{sec:srgs}

In this section, we describe our computations and small examples of pairs of graphs which are not distinguished by the spectrum $L_1^{\uparrow}$. 

\subsection{Small strongly regular graphs}

In addition to the triangular graphs, we have performed computations on classes of strongly regular graphs on a small number of vertices, using SageMath \cite{sage}; the code is freely available here \cite{srgfunctions}. 
For the descriptions of the strongly regular graphs, we used \cite{Spence}.
In doing these computations, we find examples of classes of strongly regular graphs where every graph is distinguished by the spectrum of $L_1^{\uparrow}$, given in Table \ref{tab:label1}, and three pairs of co-parametric strongly regular graphs which are pairwise non-isomorphic and not distinguished by the spectrum of $L_1^{\uparrow}$, whose parameters are given in Table \ref{tab:label2}. We note that we did not perform an exhaustive search of all fully determined classes of strongly regular graphs up to $64$ vertices as the computation was costly; in particular, we did not test the class $(36,15,6,6)$ which contains $32\,548$ graphs (see \cite{McKaySpence2001}). 
\begin{table}[htbp]
    \centering
    \begin{tabular}{|l|l|c|}
    \hline
parameters & complement & number of graphs \\
\hline
(16, 6, 2, 2), & (16, 9, 4, 6) & 2 \\
(25, 12, 5, 6) & (25, 12, 5, 6) & 15 \\
(26, 10, 3, 4) & (26, 15, 8, 9) & 10 \\
(28, 12, 6, 4) & (28, 15, 6, 10) & 4 \\
(29, 14, 6, 7) & (29, 14, 6, 7) & 41 \\
(35, 18, 9, 9) & (35, 16, 6, 8) & 3854 \\
(36, 14, 4, 6) & (36, 21, 12, 12) & 180 \\
(45, 12, 3, 3) & (45, 32, 22, 24) & 78 \\
\hline
    \end{tabular}
    \caption{Classes of strongly regular graphs, which are distinguished by the spectrum of $L_1^{\uparrow}$.}
    \label{tab:label1}
\end{table}

\begin{table}[htbp]
    \centering
    \begin{tabular}{|l|l|c|}
\hline
parameters & complement & number of graphs \\
\hline
(40, 12, 2, 4) & (40, 27, 18, 18) & 28 \\
(64, 18, 2, 6) & (64, 45, 32, 30) & 167\\
\hline
    \end{tabular}
    \caption{Classes of strongly regular graphs, containing graphs which are not distinguished by the spectrum of $L_1^{\uparrow}$.}
    \label{tab:label2}
\end{table}

The smallest pair of cospectral strongly regular graphs consists of the Hamming graph $H(2,4)$ and the Shrikhande graph, both with parameters $(16,6,2,2)$. By Theorem \ref{thm:hammingn2}, the characteristic polynomials of the $L_1^{\uparrow}$ matrix of $H(2,4)$ is $x^{24}(x-4)^{24}$. By computer, we determined the corresponding polynomial for the Shrikhande graph which is $x^{17}(x-2)^9(x-4)^9(x-6)^1(x^2-6x+4)^{6}$.

There are three pairs of graphs, one pair with parameters $(40, 12, 2, 4)$ and two pairs with $(64, 18, 2, 6)$, which are cospectral with respect to  $L_1^{\uparrow}$, but not isomorphic. We note that all the complements were distinguished by the spectrum of $L_1^{\uparrow}$. The two graphs on $40$ vertices are both generalized quadrangles $\GQ(3,3)$ and their characteristic polynomial with respect to $L_1^{\uparrow}$ is:
\[x^{120}(x - 4)^{120}  
\]
which also follows from Lemma \ref{lem:gq}. 
The two pairs on $64$ vertices, have  characteristic polynomials with respect to $L_1^{\uparrow}$ as  follows:
\begin{align*}
(x - 6)^{4}  (x - 2)^{36}  (x - 4)^{228}  x^{260}  (x^{2} - 6x + 4)^{24}, \\
(x - 6)^{8}  (x - 2)^{72}  (x - 4)^{168}  x^{232}  (x^{2} - 6x + 4)^{48}.
\end{align*}
In each pair, there is one graph that is vertex-transitive while its $L_1^{\uparrow}$-mate is not.

In the next section, we will look more closely at graphs which are not distinguished by the spectrum of $L_1^{\uparrow}$. 

\subsection{Example of \texorpdfstring{$L_1^{\uparrow}$}{L1-up}-cospectral graphs}\label{sec:cosp}

% \subsection{SRGs on $40$ vertices}

% There are two strongly regular graphs with parameters $(40,12,2,4)$ which are point graphs generalized quadrangles; they are the point graphs of $\GQ$ $W(3)$ and its dual, which are not isomorphic. They are cospectral with respect to $L_1^{\uparrow}$, like due to the geometric nature of the constructions.

% It is likely that the point graphs of non-isomorphic generalized quadrangles of order $(q,q)$ are also not distinguished. 

%\begin{lstlisting} gCOcaOcaIKEGaHPcEHCPPP`_pPCI`Q`SGiDO`PSQ``PDPOo`Q`ISHS`DQDI??????F~?C??B~wF?wF_Fw?w_Bbb_OF?wFC_ww?F`B_B{AG~w??F?w?w?{P[?BbaDF?wF?c_ \end{lstlisting}

%\verb{'g~`HW}GPHDaNaGPCcPWaN`@I@HKC_{GHAAOP_QOYOOcbADDI@PROWHQASGQhH?pFO`aIOSIDChAcCcyOOh?ggDGOhECGoOyCKEDACIOcgDDAaECFOWID?qCcAX?cgdAHCOf'}

%\subsection{Two $4$-regular graphs on $14$ vertices}

\begin{figure}[htbp]
\centering
\includegraphics[scale=0.2]{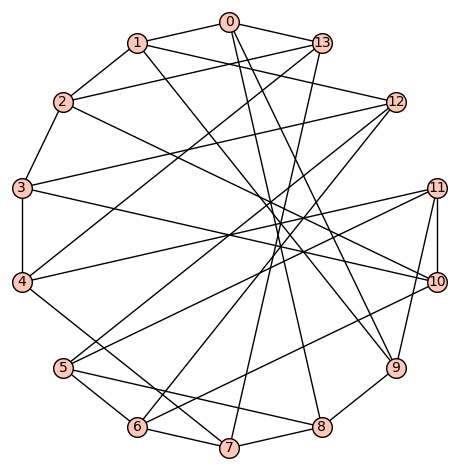}\includegraphics[scale=0.2]{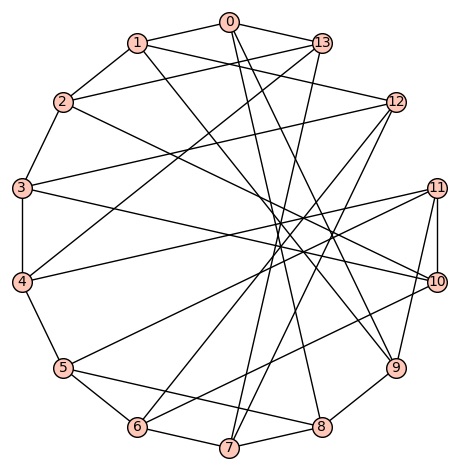}
\caption{Two graphs cospectral with respect to $L_0^{\uparrow}$ and $L_1^{\uparrow}$.}
\label{fig:4reg14}
\end{figure} 

By computer, we found two $4$-regular graphs on 14 vertices with \verb|graph6_strings| given below:
\[
\verb|MhCGGEDoHc@bSoiO?| \text{ and }
\verb|MhC?GUDoHc@bT_iO?| \]
which are non-isomorphic but cospectral with respect to $L_1^{\uparrow}$. These graphs are show in Figure \ref{fig:4reg14} and their characteristic polynomials are given below (the first for the usual adjacency matrix and the second for $L_1^{\uparrow}$):
\begin{equation*}
{ (x - 4) x (x + 3) (x^{11} + x^{10} - 15x^{9} - 13x^{8} + 81x^{7} +59x^{6} - 188x^{5} - 110x^{4} + 169x^{3} + 71x^{2} - 32x - 8),}
\end{equation*}
and 
\begin{equation*}
    {    x^{23}(x - 2)(x - 3)^{3}(x - 4)}.
\end{equation*}
We note that the complements of these two graphs are distinguished by the spectra of $L_1^{\uparrow}$. 

We also found four pairs of connected, non-isomorphic graphs $\{G_1,G_2\}$ on $6$ vertices, where $G_i, \overline{G_i}$ are not  triangle-free, $L_1^{\uparrow}(G_1), L_1^{\uparrow}(G_2)$ are cospectral and $L_1^{\uparrow}(\overline{G_1}), L_1^{\uparrow}(\overline{G_2})$ are cospectral (see Figure \ref{fig:cospec6}). We note that none of these pairs are cospectral with respect to the usual adjacency matrix. 

\begin{figure}[htbp]
    \centering
    \includegraphics[scale=0.9]{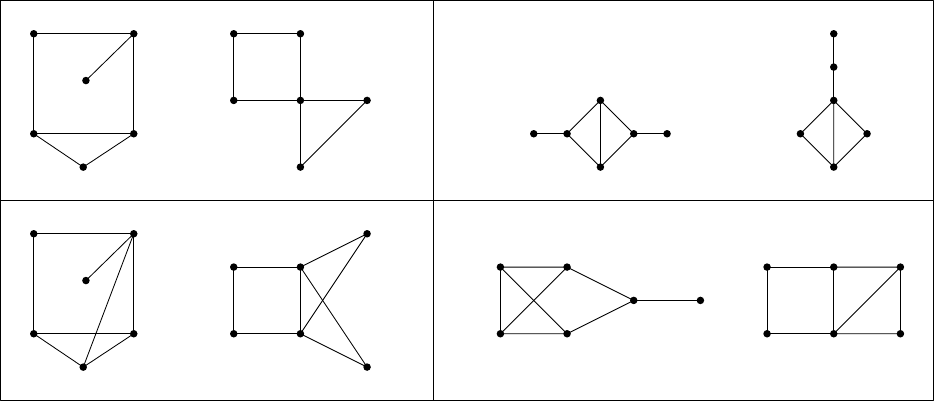}
    \caption{Four pairs of graphs which have the same $L_1^{\uparrow}$ spectrum and whose complements have the same $L_1^{\uparrow}$ spectrum.  \label{fig:cospec6}}
    
\end{figure}

%However, we have not found any pair of strongly regular graphs with same parameters (so cospectral with respect to the ordinary adjacency matrix or Laplacian matrix) that are cospectral with respect to the $L_1^{\uparrow}$ and whose complements are cospectral with respet to the $L_1^{\uparrow}$.

%\subsection{Line graphs of CFI graphs}

% We can also give another intriguing pair of graphs which are cospectral with respect to $L_1^{\uparrow}$. We take $G_1$ to be the line graph of CFI graph of $K_4$ and 
% $G_2$ to be the line graph of twist of CFI graph of $K_4$ at one edge. Note that CFI graphs themselves have no triangles. They have weakly isomorphic extension algebras. For definitions of Cai-F\"{u}rer-Immerman construction (CFI graphs), please see \cite{KG10}. 

%'{???K@?@`@OIOgC_`D?aAGF?H?s????????A???G??D???@????O??????GG???Z_??_???@?i???A???@_C???AK????_x?????C????@???O?A???C?@??????g?????@?????A_??????G_???????????@cO?????A???O??_o???_?A@??????@d????_?????????C?`???????????G????@???????????A???AG???G???@_??????G@CC???????W??????C?SGO??????F????????_@g'
%'{???K@?@`@OIOgC_`D?aAGF?H?q????????A???G??D???@????_???O???G??A@???KO???Kq??C????A?c???AB????qH?????C????@???O?A???C?@??????g?????@?????A_??????G_????????A??CC??????rC??????_???_?A?_?????@b????_?????????C?_??????_????G????@???????????A???AG???G???@_G???????o??????O@PA???????o?O??????@C_??????_D_'

\section{Conclusions and further directions}

In this paper, we studied the spectra of the up-Laplacians $L_1^{\uparrow}$ of the clique complexes of strongly regular graphs. Our motivation is rooted in the area of spectral characterization of graphs and we present examples of non-isomorphic strongly regular graphs with same parameters (and same ordinary Laplacian eigenvalues) with different $L_1^{\uparrow}$ spectrum. We also determine examples of non-isomorphic strongly regular graphs with same parameters with the same $L_1^{\uparrow}$ spectrum. However, we have not found any pair of cospectral strongly regular graphs (with the same parameters) which are cospectral with respect to $L_1^{\uparrow}$ and whose complements are also cospectral with respect to $L_1^{\uparrow}$. For a strongly regular graph $G$ with complement $\overline{G}$, the spectra of $L_1^{\uparrow}(G)$ and $L_1^{\uparrow}(\overline{G})$ determine the parameters of $G$. While it is tempting to conjecture that the spectra of $L_1^{\uparrow}(G)$ and $L_1^{\uparrow}(\overline{G})$ distinguish strongly regular graphs, this problem deserves more analysis and we plan to investigate it further in the future.

The computation of the spectrum of the Triangular graphs seems more involved the computation of the ordinary spectra and it would be nice to have a simpler way to arrive at these results. While the ordinary Laplacian (or adjacency matrix) spectrum of a primitive strongly regular graphs consists of three distinct eigenvalues that are the eigenvalues of the $3\times 3$ quotient matrix of the distance partition from a vertex, there does not seem to be such result for the higher order Laplacian matrices. The Triangular graph has five distinct $L_1^{\uparrow}$ eigenvalues, but we found other examples of graphs with more than five distinct eigenvalues. It is natural to try to understand how large can the number of distinct $L_1^{\uparrow}$ eigenvalues of a strongly regular graph be (as a function of the order or the size of the graph)? One can calculate the number of edges and the number of triangles from the spectrum of $L_1^{\uparrow}$. It would be interesting to determine other connections between the graph structure and its $L_1^{\uparrow}$ eigenvalues. Theorem \ref{prop:cycle-decomp} gives a sufficient condition for the first cohomology group to be trivial and we used this result for various families of graphs. We plan to investigate the tightness of our results as well as their applicability to other graphs in a future work.

\iffalse
Ideas to explore
\begin{enumerate}
    \item For the clique complex of a graph, you have some graphs where the max cliques all pairwise intersect at a single vertex (like a daisy, this is what happens in the extremal case of EKR), and also some graphs where the max cliques intersect cyclically (in a daisy chain), which is what the stability theorem of other EKR results say. Can the spectrum of $L_i^{\uparrow}$ where $i = \omega -2$ distinguish between these two cases? 
    \item If you have some extra structure on the SRGs, for example, if they both have Delsarte cliques, is that enough to make $L_1^{\uparrow}$ cospectral? 
    \item for $T(n)$, take a 1-factorization (with $2(n-2)$ factors) and partition up Lap with respect to this. Is this partition equitable? If so, what are the eigenvalues. Or take the partition into those edges which have vertex $n$ of $K_n$ and those which do not. 
 \end{enumerate}
\fi

\section*{Acknowledgments}

The first author thanks Alan Lew and Colby Sherwood for their comments and suggestions.

\bibliographystyle{plain}
% \bibliography{main}

% \bibitem {BGP} C. Bachoc, A. Gundert and A. Passuelo, The theta number of simplicial complexes, {\em Israel J. Math.} {\bf 232} (2019), no. 1, 443--481. 

% \bibitem {Biggs} N. Biggs, {\em Algebraic Graph Theory}, Cambridge University Press, 2nd Edition.

% \bibitem {BH} A.E. Brouwer and W.H. Haemers, {\em Spectra of Graphs} Universitext, Springer, New York, 2012.

% \bibitem {BM} A.E. Brouwer and H. van Maldeghem, {\em Strongly Regular Graphs}, Cambridge University Press, 2022.

% \bibitem {GR} C. Godsil and G. Royle, {\em Algebraic Graph Theory} Springer Graduate Texts 2001.

% \bibitem {Gundert} A. Gundert, Spectra of Simplicial Complexes, {\em IMAGE} The International Linear Algebra Society Newsletter Issue 61, Fall 2018, available at {\tt https://www.ilasic.org/IMAGE/}

% \bibitem{GW} A. Gundert and U. Wagner,On Eigenvalues of Random Complexes, {\em Israel J. Math.} {\bf 216}, 545--582 (2016). {\tt https://doi.org/10.1007/s11856-016-1419-1}

% \bibitem {HJ} R. Horn and C. Johnson, {\em Matrix Analysis}, Cambridge University Press, Cambridge, 1985. xiii+561 pp. ISBN: 0-521-30586-1.

% \bibitem {J} C. Ji, {\em Eigenvalues of Simplicial Complexes}, Senior Thesis, University of Delaware 2019.

% \bibitem {Lim} L.-H. Lim, Hodge Laplacians on graphs, {\em SIAM Review} {\bf 62} (2020), no. 3, 685--715.

% \bibitem {vLW} J. van Lint and R.M. Wilson, {\em A Course in Combinatorics}, Cambridge University Press, 2nd edition 2001.

% \end{thebibliography}

\begin{appendices}

\section{The Invariance of the Laplacian Spectrum under Vertices Reordering}\label{a:inv}

In this appendix, we include a proof that the spectra of $L_s^{\uparrow}, L_s^{\downarrow},$ and $L_s$  do not depend upon the choice of the ordering for the elements of the ground set. We have not found such proof explicitly given in the literature and, for the sake of completeness, we  include a proof here.

\begin{lemma}\label{reordering}
    The spectra of $L_s^{\uparrow}, L_s^{\downarrow},$ and $L_s$ are invariant under reordering of the elements of the ground set for each $s$.
\end{lemma}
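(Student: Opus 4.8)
The plan is to show that reordering the ground set replaces each coboundary matrix $\delta_s$ by $P\,\delta_s\,Q^{-1}$ for suitable signed permutation matrices $P$ and $Q$, and that this conjugates the Laplacians by an orthogonal matrix, hence preserves their spectra. First I would fix the ground set $V$ together with a total order $<$, and let $<'$ be a second total order; it suffices to treat the case where $<'$ differs from $<$ by a single transposition of two consecutive elements, since any permutation is a product of such transpositions and spectral invariance is transitive. For a face $F\in X_s$ with elements listed in increasing order, reordering the ground set permutes the positions of the elements within $F$, so the sign $[F:K]$ of \eqref{eq:signing} changes by $\mathrm{sgn}(\pi_F)$, where $\pi_F$ is the permutation of $\{0,\dots,s\}$ recording how the sorted list of $F$ under $<$ is rearranged into the sorted list under $<'$; crucially this sign depends only on $F$ (not on the subface $K$). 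Concretely, define the diagonal matrix $D_s\in\mathrm{Mat}_{X_s\times X_s}(\re)$ with $(D_s)_{F,F}=\mathrm{sgn}(\pi_F)\in\{\pm1\}$.

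The key algebraic claim is then: if $\delta_s$ and $\delta_s'$ denote the coboundary maps for the two orderings, then
\[
\delta_s' = D_{s+1}\,\delta_s\, D_s.
\]
To verify this, compute the $(H,F)$ entry: $(\delta_s')_{H,F}=[H:F]'$, and one checks that $[H:F]' = \mathrm{sgn}(\pi_H)\,[H:F]\,\mathrm{sgn}(\pi_F)$ by tracking the position of the unique element $H\setminus F=\{x_j\}$ inside the sorted list of $H$ before and after reordering — the parity of the number of elements of $F$ that sit to the left of $x_j$ changes exactly by the product of the two local sign discrepancies. (I would carry this out just for a single adjacent transposition, where the bookkeeping is short.) Granting this identity, since each $D_s$ is a diagonal $\pm1$ matrix we have $D_s^T=D_s=D_s^{-1}$, so
\[
(L_s^{\uparrow})' = (\delta_s')^T\delta_s' = D_s\,\delta_s^T\,D_{s+1}^2\,\delta_s\,D_s = D_s\,(\delta_s^T\delta_s)\,D_s = D_s\,L_s^{\uparrow}\,D_s,
\]
and likewise $(L_s^{\downarrow})' = \delta_{s-1}'(\delta_{s-1}')^T$ is conjugate to $L_s^{\downarrow}$ by $D_s$, hence $L_s'$ is too. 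Since $D_s$ is orthogonal, conjugation by it preserves eigenvalues with multiplicities, which gives the lemma.

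The main obstacle is the sign bookkeeping in establishing $\delta_s' = D_{s+1}\,\delta_s\,D_s$: one must be careful that the diagonal correction matrix at level $s$ is the \emph{same} whether it arises from the $\delta_s$ side or the $\delta_{s-1}$ side, i.e. that $\mathrm{sgn}(\pi_F)$ is intrinsic to the face $F$ and does not secretly depend on which incidence it participates in. Reducing to a single adjacent transposition makes this transparent: there $\pi_F$ is either the identity (sign $+1$), when $F$ contains at most one of the two swapped elements, or a single transposition of two adjacent positions in $F$'s sorted list (sign $-1$), when $F$ contains both; and in the latter case the relevant position shift of $x_j$ inside $H$ is easy to enumerate. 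Once this base case is done, the general statement follows formally by composing transpositions and using that each induces a conjugation by an orthogonal matrix.
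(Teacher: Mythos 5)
Your proposal is correct and follows essentially the same route as the paper: reduce to an adjacent transposition, observe that a face changes sign exactly when it contains both swapped elements, write $\delta_s' = D_{s+1}\delta_s D_s$ with diagonal $\pm 1$ matrices (the paper's $S$ and $T$, which it identifies via a block decomposition rather than the permutation-sign identity), and conclude by orthogonal conjugation. Your explicit treatment of $L_s^{\downarrow}$ and $L_s$ via conjugation by the same $D_s$ is a small, harmless addition to what the paper records.
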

\begin{proof}
    It suffices to prove the lemma for $L_s^{\uparrow}$. Since the symmetric group $S_n$ is generated by the transpositions $(i,i+1)$ for $1 \leq i \leq n-1$, it suffices to show the invariance under reordering of two consecutive elements. That is, if the initial ordering is $1 < 2< \dots < n$, then let the new ordering be $1 < 2 < \dots < i-1 < i+1 < i < i+2 < \dots < n$ for some $1 \leq i \leq n-1$.
    Let $\delta_i$ and $\delta_i'$ denote the coboundary matrices under the first and the second orderings mentioned above respectively. Let $A,B,C$ denote the sets of $i$ faces that contain exactly $0,1,$ and $2$ of the elements $i,(i+1)$. Similarly, let $D,E,F$ be the sets of $i+1-$ faces that contain exactly $0,1,$ and $2$ of the elements $i,(i+1)$. In block form, $\delta_i$ has certain $0-$ blocks:
     \[
        \delta_s = \begin{bNiceMatrix}[first-row,first-col]
             & A & B & C \\
            D & a & 0 & 0 \\
            E & b & c & 0 \\
            F & 0 & d & e \\
        \end{bNiceMatrix},
    \]
    since if $U \in X_{i+1}, V \in X_i$ with $V \subset U$, then $|V\cap \{i,i+1\}| \leq |U \cap \{i,i+1\}| \leq |V\cap \{i,i+1\}|+1.$ Now, in $\delta_s',$ every block except for $d$ is the same as in $\delta_s$, while the $d$ block is replaced by $-d$ in $\delta_s'$. That is, 
    \[
        \delta_s' = \begin{bNiceMatrix}[first-row,first-col]
             & A & B & C \\
            D & a & 0 & 0 \\
            E & b & c & 0 \\
            F & 0 & -d & e \\
        \end{bNiceMatrix}.
    \]
    Now, consider the diagonal matrix $S \in \mathbf{R}_{X_{i+1} \times X_{i+1}}$ a with the diagonal entries on the rows corresponding to elements in $F$ are $-1$, and the remaining nonzero entries are $+1$. Similarly, let $T \in \mathbf{R}_{X_{i+1} \times X_{i+1}}$ be the diagonal matrix with the diagonal entries on columns corresponding to elements in $C$ are $-1$, and the remaining nonzero entries $+1$. We then have
    \begin{gather*}
        \delta_s' = S \delta_s T.
    \end{gather*}
    In addition, $SS^T = S^TS=I$ and $TT^T = T^TT=I.$
    Thus, if $L_s'^{\uparrow}$ is the upper-Laplacian under the new ordering,
    \begin{gather*}
        L_s'^{\uparrow} = \delta_s'^T \delta_s'
 = T^T\delta_s^TS^TS\delta_sT=T^{-1}L_s^{\uparrow}T.
    \end{gather*}
    Thus, $L_s'^{\uparrow}$ and $L_s^{\uparrow}$ are similar and hence, have the same spectrum.
\end{proof}

\section{Remaining details of \texorpdfstring{$n-$}{n}eigenvector constructions (Type 2) }\label{appendix:e_n_type2}
(Continuing with the notations and terminologies in the Type 2 case of \ref{eig_n}) 
\begin{itemize}
    \item  $e = \{\{b,i\},\{b,n\}\}$. Using \eqref{eq:verB}, we get that 
    \begin{gather*}
        (L_1^{\uparrow}w)_e=\sum_{p=0:p \not \in \{a,b\}}^{n-1} \sum_{f \in S_p}L_1^{\uparrow}(e,f) [S_p:f] \\
         =L_1^{\uparrow}(e,\{\{a,b\},\{b,n\}\})[S_0:\{\{a,b\},\{b,n\}]+L_1^{\uparrow}(e,\{\{b,i\},\{i,n\})[S_i:\{\{b,i\},\{i,n\}\}]\\
         =-L_1^{\uparrow}(e,\{\{a,b\},\{b,n\}\})+L_1^{\uparrow}(e,\{\{b,i\},\{i,n\})\\
         =[e:\{b,n\}][\{\{a,b\},\{b,n\}\}:\{b,n\}]-[e:\{b,i\}][\{\{b,i\},\{i,n\}\}:\{b,i\}]\\
         =(-1)(-1)-1\cdot 1=0=nw_e.
        \end{gather*}
        
        \item  $e = \{\{a,b\},\{b,i\}\}$. Using \eqref{eq:verB}, we get that 
        \begin{gather*}
        (L_1^{\uparrow}w)_e=\sum_{p=0:p \not \in \{a,b\}}^{n-1} \sum_{f \in S_p}L_1^{\uparrow}(e,f) [S_p:f] \\
        =L_1^{\uparrow}(e,\{\{a,b\},\{b,n\}\})[S_0:\{\{a,b\},\{b,n\}\}]+L_1^{\uparrow}(e,\{\{a,i\},\{b,i\}\})[S_i:\{\{a,i\},\{b,i\}\}]\\
        =-L_1^{\uparrow}(e,\{\{a,b\},\{b,n\}\})+L_1^{\uparrow}(e,\{\{a,i\},\{b,i\}\})\\
        =[e:\{a,b\}][\{\{a,b\},\{b,n\}\}:\{a,b\}]-[e:\{b,i\}][\{\{a,i\},\{b,i\}\}:\{b,i\}]\\
        =[e:\{a,b\}]+[e:\{b,i\}]=0=nw_e.
        \end{gather*}  
        
        \item  $e = \{\{a,n\},\{i,n\}\}$. Using \eqref{eq:verB}, we get that 
        \begin{gather*}
        (L_1^{\uparrow}w)_e=\sum_{p=0:p \not \in \{a,b\}}^{n-1} \sum_{f \in S_p}L_1^{\uparrow}(e,f) [S_p:f] \\
        =L_1^{\uparrow}(e,\{\{a,n\},\{b,n\}\})[S_0:\{\{a,n\},\{b,n\}\}]+L_1^{\uparrow}(e,\{\{i,n\},\{i,a\})[S_i:\{\{i,n\},\{i,a\}\}]\\
        =L_1^{\uparrow}(e,\{\{a,n\},\{b,n\}\})-L_1^{\uparrow}(e,\{\{i,n\},\{i,a\})\\
        =-[e:\{a,n\}][\{\{a,n\},\{b,n\}\}:\{a,n\}]+[e:\{i,n\}][\{\{i,a\},\{i,n\}\}:\{i,n\}]\\
        =-[e:\{a,n\}]-[e:\{i,n\}]=0=nw_e.
        \end{gather*}
        
        \item $e = \{\{b,n\},\{i,n\}\}$. Using \eqref{eq:verB}, we have that 
        \begin{gather*}
        (L_1^{\uparrow}w)_e=\sum_{p=0:p \not \in \{a,b\}}^{n-1} \sum_{f \in S_p}L_1^{\uparrow}(e,f) [S_p:f] \\
        =L_1^{\uparrow}(e,\{\{a,n\},\{b,n\}\})[S_0:\{\{a,n\},\{b,n\}\}]+L_1^{\uparrow}(e,\{\{i,n\},\{i,b\}\})[S_i:\{\{i,n\},\{i,b\}\}]\\
        =L_1^{\uparrow}(e,\{\{a,n\},\{b,n\}\})+L_1^{\uparrow}(e,\{\{i,n\},\{i,b\}\})\\
        =-[e:\{b,n\}][\{\{a,n\},\{b,n\}\}:\{b,n\}]-[e:\{i,n\}][\{\{i,n\},\{i,b\}\}:\{i,b\}]\\
        =[e:\{b,n\}]+[e:\{i,n\}]=0=nw_e.
        \end{gather*}     
\end{itemize}

\section{Remaining details of \texorpdfstring{$n-$}{n}eigenvector constructions (Type 3)}\label{appendix:e_n_type3}

\item (Continuing with the notations and terminologies in the Type 3 case of \ref{eig_n})

\begin{itemize}
    \item \textbf{Case 2:} $e = \{\{i,b\},\{i,x\}\}$. Using \eqref{eq:verB}, we get that
    \begin{gather*}
        (L_1^{\uparrow}w)_e=\sum_{p=0:p \not \in \{a,b\}}^{n-1} \sum_{f \in S_p}L_1^{\uparrow}(e,f) [S_p:f] \\ 
        =L_1^{\uparrow}(e,\{\{i,b\},\{i,n\}\})[S_i:\{\{i,b\},\{i,n\}\}]+L_1^{\uparrow}(e,\{\{i,a\},\{i,b\}\})[S_i:\{\{i,a\},\{i,b\}\}] \\
        =L_1^{\uparrow}(e,\{\{i,b\},\{i,n\}\})+ L_1^{\uparrow}(e,\{i,a\},\{i,b\}\}) \\
        =[e:\{i,b\}][\{\{i,b\},\{i,n\}\}:\{i,b\}]+[e:\{i,b\}][\{\{i,a\},\{i,b\}\}:\{i,b\}]\\
        =[e:\{i,b\}]-[e:\{i,b\}]=0=nw_e.
    \end{gather*}
    \item \textbf{Case 3:} $e = \{\{i,n\},\{i,x\}\}.$ Using \eqref{eq:verB} again,
    \begin{gather*}
        (L_1^{\uparrow}w)_e = \sum_{p=0:p \not \in \{a,b\}}^{n-1} \sum_{f \in S_p}L_1^{\uparrow}(e,f) [S_p:f] \\ 
        =L_1^{\uparrow}(e,\{\{i,n\},\{i,a\}\})[S_i:\{\{i,n\},\{i,a\}\}]+L_1^{\uparrow}(e,\{\{i,n\},\{i,b\}\})[S_i:\{\{i,n\},\{i,b\}\}]\\
        =-L_1^{\uparrow}(e,\{\{i,n\},\{i,a\}\})+L_1^{\uparrow}(e,\{\{i,n\},\{i,b\}\})\\
        =[e:\{i,n\}][\{\{i,n\},\{i,x\}\}:\{i,n\}]-[e:\{i,n\}][\{\{i,n\},\{i,b\}\}:\{i,n\}]\\
        =-[e:\{i,n\}]+[e:\{i,n\}]=0=nw_e.
    \end{gather*}
\end{itemize}

\end{appendices}
\end{document}